\documentclass[smallextended,envcountsect,envcountsame]{svjour3}
\journalname{Probability Theory and Related Fields}
\usepackage{mathptmx}
\smartqed

\usepackage{amsmath,amssymb,amsfonts,mathrsfs,graphicx,color,bbm}
\usepackage{stmaryrd}
\usepackage{enumerate}
\usepackage[english]{babel}
\selectlanguage{english}

\usepackage[bookmarksopen,colorlinks,linkcolor=black,citecolor=black]{hyperref}

\topmargin -0.5in
\textheight 9in
\oddsidemargin 0.15in
\evensidemargin 0.25in
\textwidth 6.15in


\newcommand{\E}{\mathbb{E}}
\newcommand{\R}{\mathbb{R}}
\newcommand{\Q}{\mathbb{Q}}
\newcommand{\N}{\mathbb{N}}
\newcommand{\Z}{\mathbb{Z}}
\renewcommand{\P}{\mathbb{P}}

\newcommand{\cA}{\mathcal{A}}
\newcommand{\cC}{\mathcal{C}}
\newcommand{\cE}{\mathcal{E}}

\newcommand{\cI}{\mathcal{I}}

\newcommand{\condparentheses}[2]{\left(\left.#1\,\right\vert#2\right)}
\newcommand{\condparenthesesreversed}[2]{\left(#1\left\vert\,#2\right.\right)}
\newcommand{\condP}[2]{\mathbb{P}\condparentheses{#1}{#2}}

\newcommand{\interior}[1]{{\rm int}\!\left(#1\right)}
\newcommand{\closure}[1]{{\rm clo}\!\left(#1\right)}

\DeclareMathOperator{\Capa}{Cap}
\DeclareMathOperator{\Vol}{Vol}
\DeclareMathOperator{\diam}{diam}

\newcommand{\boundary}{\partial}

\newcommand{\set}[1]{\left\{#1\right\}}
\newcommand{\abs}[1]{\left\vert#1\right\vert}

\newcommand{\floor}[1]{\left\lfloor#1\right\rfloor}
\newcommand{\ceiling}[1]{\left\lceil#1\right\rceil}

\newcommand{\shortset}[1]{\{#1\}}
\newcommand{\shortabs}[1]{\vert#1\vert}

\newcommand{\ocinterval}[1]{\left(#1\right]} 
\newcommand{\cointerval}[1]{\left[#1\right)} 

\newcommand{\indicatorofset}[1]{\mathbbm{1}_{#1}}
\newcommand{\indicator}[1]{\indicatorofset{\set{#1}}}

\newcommand{\union}{\cup}
\newcommand{\bigunion}{\bigcup}
\newcommand{\intersect}{\cap}

\newcommand{\decreasesto}{\downarrow}
\newcommand{\increasesto}{\uparrow}

\renewcommand{\th}{{\rm th}}

\renewcommand{\phi}{\varphi}
\renewcommand{\emptyset}{\varnothing}

\newcommand{\blank}[1]{}


\newcommand{\T}{\mathbb{T}}



\spnewtheorem{varquestion}{Question}{\bf}{\it} 


\newcommand{\textandreference}[2]{\texorpdfstring{\hyperref[#2]{#1\ref*{#2}}}{#1\ref*{#2}}}


\newcommand{\lbsect}[1]{\label{s:#1}}
\newcommand{\refsect}[1]{\textandreference{Section~}{s:#1}}

\newcommand{\lbsubsect}[1]{\label{ss:#1}}
\newcommand{\refsubsect}[1]{\textandreference{Section~}{ss:#1}}
\newcommand{\refSubSect}[1]{\textandreference{Section~}{ss:#1}}

\newcommand{\lbsubsubsect}[1]{\label{sss:#1}}
\newcommand{\refsubsubsect}[1]{\textandreference{Section~}{sss:#1}}

\newcommand{\lbappendix}[1]{\label{app:#1}}
\newcommand{\refappendix}[1]{\textandreference{Appendix~}{app:#1}}

\newcommand{\lbthm}[1]{\label{t:#1}}
\newcommand{\refthm}[1]{\textandreference{Theorem~}{t:#1}}

\newcommand{\lbprop}[1]{\label{p:#1}}
\newcommand{\refprop}[1]{\textandreference{Proposition~}{p:#1}}

\newcommand{\lblemma}[1]{\label{l:#1}}
\newcommand{\reflemma}[1]{\textandreference{Lemma~}{l:#1}}

\newcommand{\lbcoro}[1]{\label{c:#1}}
\newcommand{\refcoro}[1]{\textandreference{Corollary~}{c:#1}}

\newcommand{\lbdefn}[1]{\label{d:#1}}
\newcommand{\refdefn}[1]{\textandreference{Definition~}{d:#1}}

\newcommand{\lbitem}[1]{\label{item:#1}}
\newcommand{\refitem}[1]{\ref{item:#1}}

\newcommand{\lbfig}[1]{\label{fig:#1}}
\newcommand{\reffig}[1]{\textandreference{Fig.~}{fig:#1}}

\newcommand{\lbquestion}[1]{\label{q:#1}}
\newcommand{\refquestion}[1]{\textandreference{Question~}{q:#1}}


\numberwithin{equation}{section}


\begin{document}

\title{Extremal geometry of a Brownian porous medium}

\author{
Jesse Goodman \and Frank den Hollander
}

\institute{J. Goodman \and F. den Hollander 
\at Mathematical Institute, Leiden University, P.O.\ Box 9512, 2300 RA Leiden, The Netherlands.\\
\email{goodmanja@math.leidenuniv.nl, denholla@math.leidenuniv.nl}}

\date{Received: date / Accepted: date}

\maketitle

\begin{abstract}
The path $W[0,t]$ of a Brownian motion on a $d$-dimensional torus $\T^d$ run for time $t$ is 
a random compact subset of $\T^d$. We study the geometric properties of the complement 
$\T^d\setminus W[0,t]$ as $t\to\infty$ for $d\geq 3$. In particular, we show that the largest 
regions in $\T^d\setminus W[0,t]$ have a linear scale $\phi_d(t)=[(d\log t)/(d-2)\kappa_d t]^{1/(d-2)}$, 
where $\kappa_d$ is the capacity of the unit ball. More specifically, we identify the sets $E$ for 
which $\T^d\setminus W[0,t]$ contains a translate of $\phi_d(t)E$, and we count the number of 
disjoint such translates. Furthermore, we derive large deviation principles for the largest inradius of
$\T^d\setminus W[0,t]$ as $t\to\infty$ and the $\epsilon$-cover time of $\T^d$ as $\epsilon 
\downarrow 0$. Our results, which generalise laws of large numbers proved by Dembo, Peres and Rosen \cite{DPR2003}, 
are based on a large deviation estimate for the shape of the component with largest capacity 
in $\T^d \setminus W_{\rho(t)}[0,t]$, where $W_{\rho(t)}[0,t]$ is the Wiener sausage of radius 
$\rho(t)$, with $\rho(t)$ chosen much smaller than $\phi_d(t)$ but not too small. The idea 
behind this choice is that $\T^d \setminus W[0,t]$ consists of ``lakes'', whose linear size is of 
order $\phi_d(t)$, connected by narrow ``channels''. We also derive large deviation principles 
for the principal Dirichlet eigenvalue and for the maximal volume of the components of $\T^d
\setminus W_{\rho(t)}[0,t]$ as $t\to\infty$. Our results give a complete picture of the extremal
geometry of $\T^d\setminus W[0,t]$ and of the optimal strategy for $W[0,t]$ to realise the extremes.   

\keywords{Brownian motion \and random set \and capacity \and largest inradius \and cover 
time \and principal Dirichlet eigenvalue \and large deviation principle}
\subclass{60D05 \and 60F10 \and 60J65}
\end{abstract}


\section{Introduction}
\lbsect{Intro}


\subsection{Five key questions}
\lbsubsect{Mot}

$\bullet$ 
Our basic object of study is the complement of a random path:

\begin{varquestion}
\lbquestion{Geometry}
Run a Brownian motion $W=(W(t))_{t \geq 0}$ on a $d$-dimensional torus $\T^d$, $d\geq 3$.  
What is the geometry of the random set $\T^d \setminus W[0,t]$ for large $t$?
\end{varquestion}

\noindent
\reffig{d=2sim} shows a simulation in $d=2$. 

\begin{figure}[htbp]
{\hfill
\includegraphics[width=0.4\textwidth]{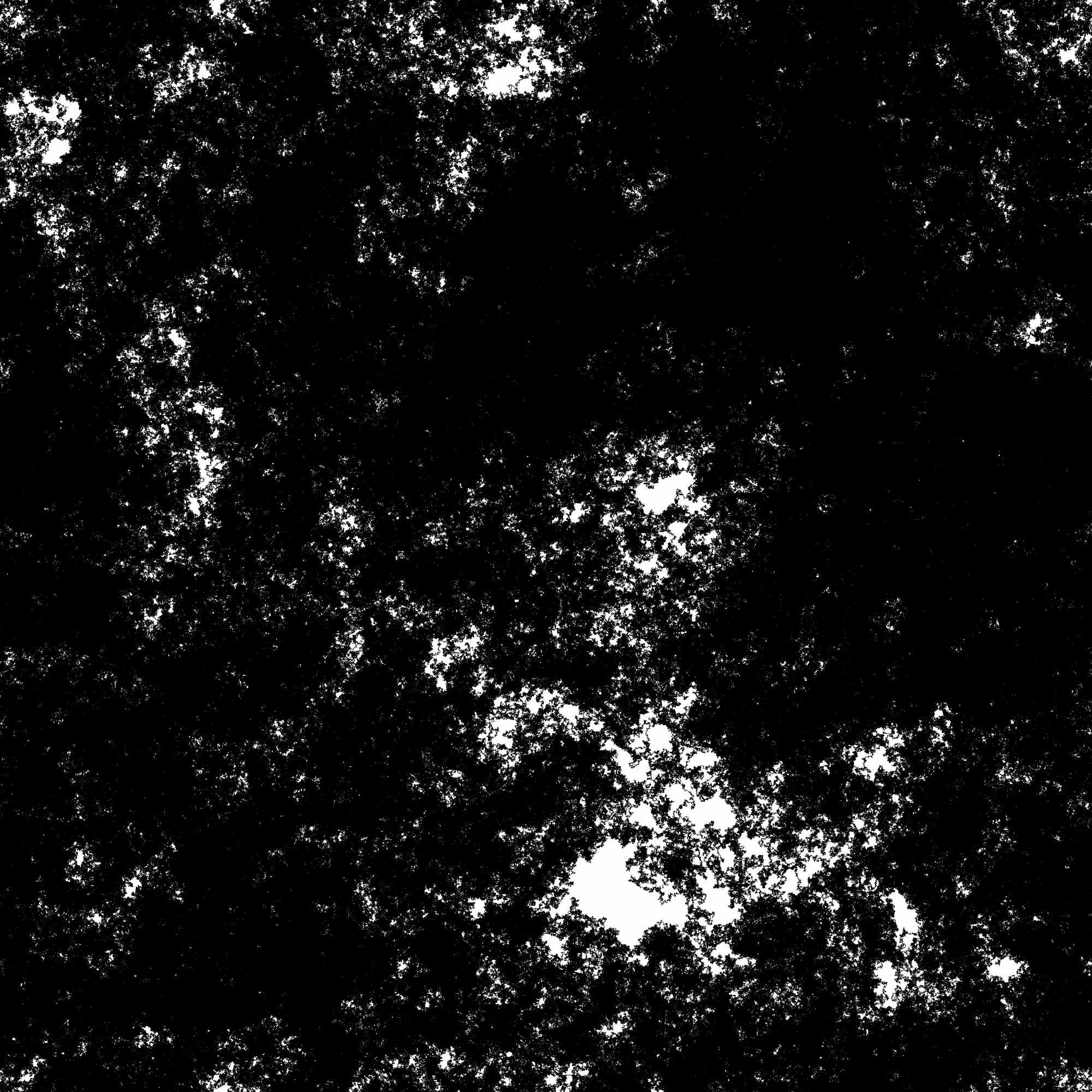}
\hfill
}
\caption{Simulation of $W[0,t]$ (shown in black) for $t=15$ in $d=2$. The holes in $\T^2\setminus W[0,t]$
(shown in white) have an irregular shape. The goal is to understand the geometry of the largest holes. The 
present paper only deals with $d \geq 3$. In \refsubsubsect{2d} below we will reflect on what 
happens in $d=2$.
}
\lbfig{d=2sim}
\end{figure}

Regions with a random boundary have been studied intensively in the literature, and 
questions such as \refquestion{Geometry} have been approached from a variety of perspectives. 
Sznitman~\cite{S1998} studies the principal Dirichlet eigenvalue when a Poisson cloud of 
obstacles is removed from Euclidean space $\R^d$, $d\geq 1$. Van den Berg, Bolthausen and 
den Hollander~\cite{vdBBdH2001} consider the large deviation properties of the volume of a 
Wiener sausage on $\R^d$, $d \geq 2$, and identify the geometric strategies for achieving 
these large deviations. Probabilistic techniques also play a role in the analysis of 
deterministic shapes, such as strong circularity in rotor-router and sandpile models 
shown by Levine and Peres~\cite{LP2009}, and heat flow in the von Koch snowflake and its 
relatives analysed by van den Berg and den Hollander~\cite{vdBdH1999}, van den 
Berg~\cite{vdB2000}, and van den Berg and Bolthausen~\cite{vdBB2004}. The discrete 
analogue to \refquestion{Geometry}, random walk on a large discrete torus, is connected to 
the random interlacements model of Sznitman~\cite{S2010} (to which we will return in 
\refsubsubsect{RandomInterlacements} below).

\refquestion{Geometry} is studied by Dembo, Peres and Rosen~\cite{DPR2003} for $d\geq 3$ and 
Dembo, Peres, Rosen and Zeitouni~\cite{DPRZ2004} for $d=2$.  In both cases, a law of large 
numbers is established for the \emph{$\epsilon$-cover time} (the time for the Brownian motion to come 
within distance $\epsilon$ of every point) as $\epsilon\decreasesto 0$. For $d\geq 3$, Dembo, Peres and Rosen also obtain the multifractal spectrum of \emph{late points} (those points that are approached within distance 
$\epsilon$ on a time scale that is a positive fraction of the $\epsilon$-cover time). In the present 
paper we will consider a large but fixed time $t$, and we will use a key lemma from \cite{DPR2003} 
to obtain global information about $\T^d \setminus W[0,t]$. Throughout the paper we fix a 
dimension $d \geq 3$. The behaviour in $d=2$ is expected to be quite different (see the discussion in \refsubsubsect{2d} below).

A random set is an infinite-dimensional object, hence issues of measurability require care. In 
general, events are defined in terms of whether a random closed set intersects a given closed 
set, or whether a random open set contains a given closed set: see Matheron~\cite{M1975} or
Molchanov~\cite{M2005} for a general theory of random sets and questions related to their 
geometry.  On the torus we will parametrize these basic events as 
\begin{equation}
\label{BasicEvent}
\set{(x+\phi E)\intersect W[0,t]=\emptyset} = \shortset{x+\phi E\subset \T^d\setminus W[0,t]}, 
\qquad x\in\T^d,\,E\subset\R^d \text{ compact}
\end{equation}
(see \eqref{AdditionOfSet} below), where $\phi>0$ acts as a scaling factor. The set $E$ in 
\eqref{BasicEvent} plays a role similar to that of a test function, and we will restrict our 
attention to suitably regular sets $E$, for instance, compact sets with non-empty interior.

\medskip\noindent
$\bullet$ 
In giving an answer to \refquestion{Geometry}, we must distinguish between global properties, 
such as the size of the largest inradius or the principal Dirichlet eigenvalue of the random 
set, and local properties, such as whether or not the random set is locally connected. In the 
present paper we focus on the \emph{global properties} of $\T^d \setminus W[0,t]$. We will 
therefore be interested in the existence of subsets of $\T^d\setminus W[0,t]$ of a given form:

\begin{varquestion}
\lbquestion{ExistsTranslate}
For a given compact set $E\subset\R^d$, what is the probability of the event
\begin{equation}
\label{ExistsTranslateEvent}
\set{\exists x\in\T^d\colon\, x+\phi E\subset \T^d\setminus W[0,t]} 
= \bigunion_{x\in\T^d}\set{x+\phi E\subset \T^d\setminus W[0,t]}
\end{equation}
formed as the uncountable union of events from \eqref{BasicEvent}?
\end{varquestion}

\noindent
For instance, questions about the inradius can be formulated in terms of 
\refquestion{ExistsTranslate} by setting $E$ to be a ball.

The answer to \refquestion{ExistsTranslate} depends on the scaling factor $\phi$.
To obtain a non-trivial result we are led to choose $\phi=\phi_d(t)$ depending on 
time, where
\begin{equation}
\label{phidtDefinition}
\phi_d(t) = \left(\frac{d}{(d-2)\kappa_d}\,\frac{\log t}{t}\right)^{1/(d-2)},
\qquad t>1,
\end{equation}
and $\kappa_d$ is the constant
\begin{equation}
\label{kappadDefinition}
\kappa_d=\frac{2\pi^{d/2}}{\Gamma(d/2-1)}.
\end{equation}
We will see that $\phi_d(t)$ represents the \emph{linear size of the largest subsets of} 
$\T^d\setminus W[0,t]$, in the sense that the limiting probability of the event in 
\eqref{ExistsTranslateEvent} decreases from $1$ to $0$ as the set $E$ increases from 
small to large, in the sense of small or large capacity (see \refsubsubsect{CapacityDefinition} 
below).

In what follows we will see that $T^d\setminus W[0,t]$ is controlled by two spatial scales:
\begin{equation}
\label{twoscales}
\phi_\mathrm{local}(t) = \left(\frac{1}{t}\right)^{1/(d-2)}, \qquad
\phi_\mathrm{global}(t) = \left(\frac{\log t}{t}\right)^{1/(d-2)}.
\end{equation}
The linear size of the \emph{typical} holes in $T^d\setminus W[0,t]$ is of order $\phi_\mathrm{local}(t)$, 
the linear size of the \emph{largest} holes of order $\phi_\mathrm{global}(t)$. The choice \eqref{phidtDefinition} of $\phi_d(t)$ is a fine tuning of the latter.

\medskip\noindent
$\bullet$
For a typical point $x\in\T^d$, the event $\set{x+\phi_d(t)E\subset\T^d\setminus W[0,t]}$ 
in \eqref{BasicEvent} is unlikely to occur even when $E$ is small. However, given a compact set
$E\subset\R^d$, the points $x\in\T^d$ for which $x+\phi_d(t)E\subset\T^d\setminus W[0,t]$
(i.e., the points that realize the event in \eqref{ExistsTranslateEvent}) are atypical, and 
we can ask whether the subset $x+\phi_d(t)E$ is likely to form part of a considerably 
larger subset:

\begin{varquestion}
\lbquestion{LargerSubset}
Are the points $x\in\T^d$ for which $x+\phi_d(t)E\subset\T^d\setminus W[0,t]$ likely to satisfy 
$x+\phi_d(t)E'\subset\T^d\setminus W[0,t]$ for some substantially larger set $E'\supset E$?
\end{varquestion}

\noindent
\refquestion{LargerSubset} aims to distinguish between the two qualitative pictures shown in 
\reffig{SparseVsDense}, which we call \emph{sparse} and \emph{dense}, respectively. We will 
show that in $d \geq 3$ the answer to \refquestion{LargerSubset} is no, i.e., the picture is dense 
as in part (b) of \reffig{SparseVsDense}. In \refsubsubsect{2d} below we will argue that in $d=2$
the answer to \refquestion{LargerSubset} is yes, i.e., the picture is sparse as in part (a) of 
\reffig{SparseVsDense}. This can already be seen from \reffig{d=2sim}.
 
\begin{figure}[htbp]
{
\hfill
(a)
\includegraphics[width=0.4\textwidth]{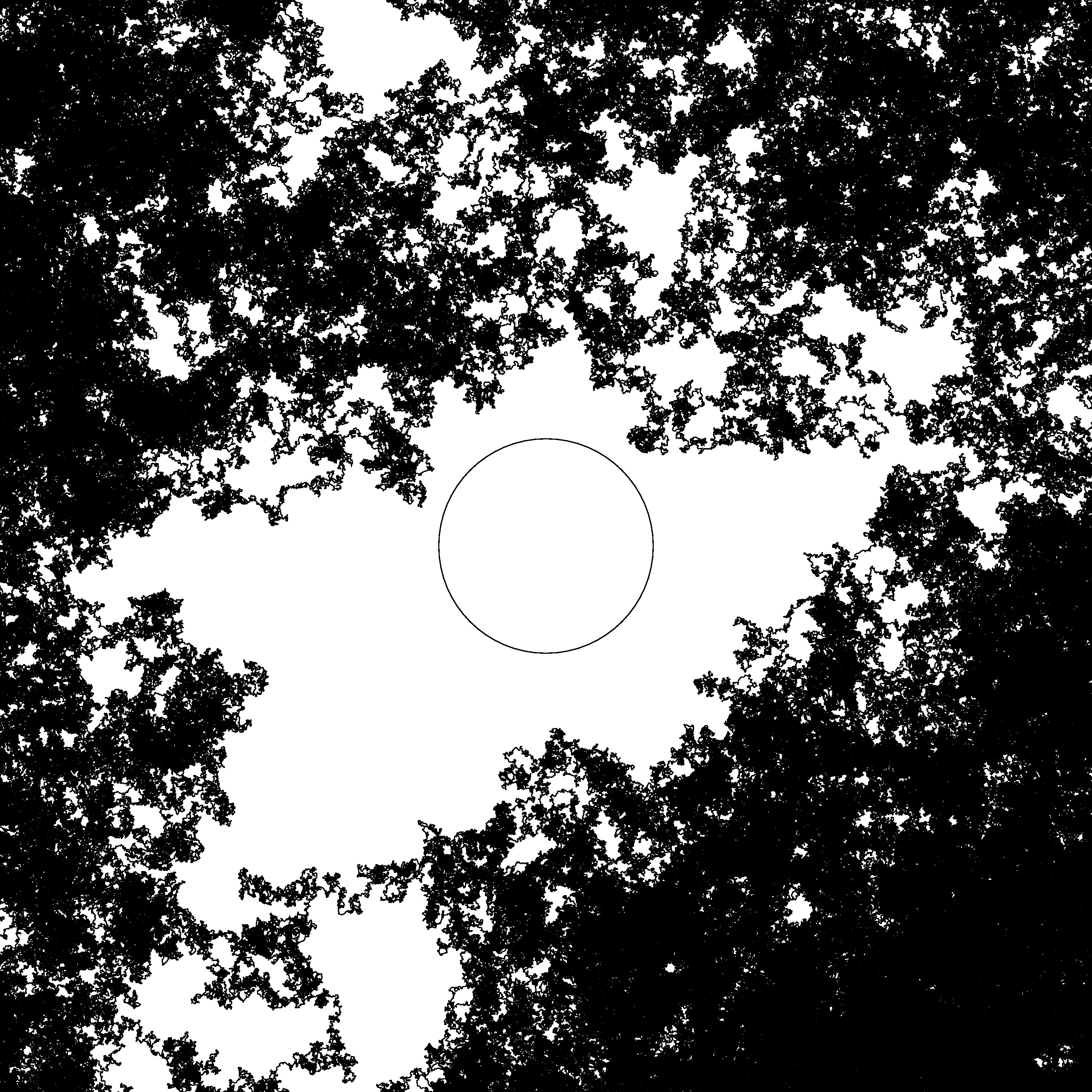}
\hfill
(b)
\includegraphics[width=0.4\textwidth]{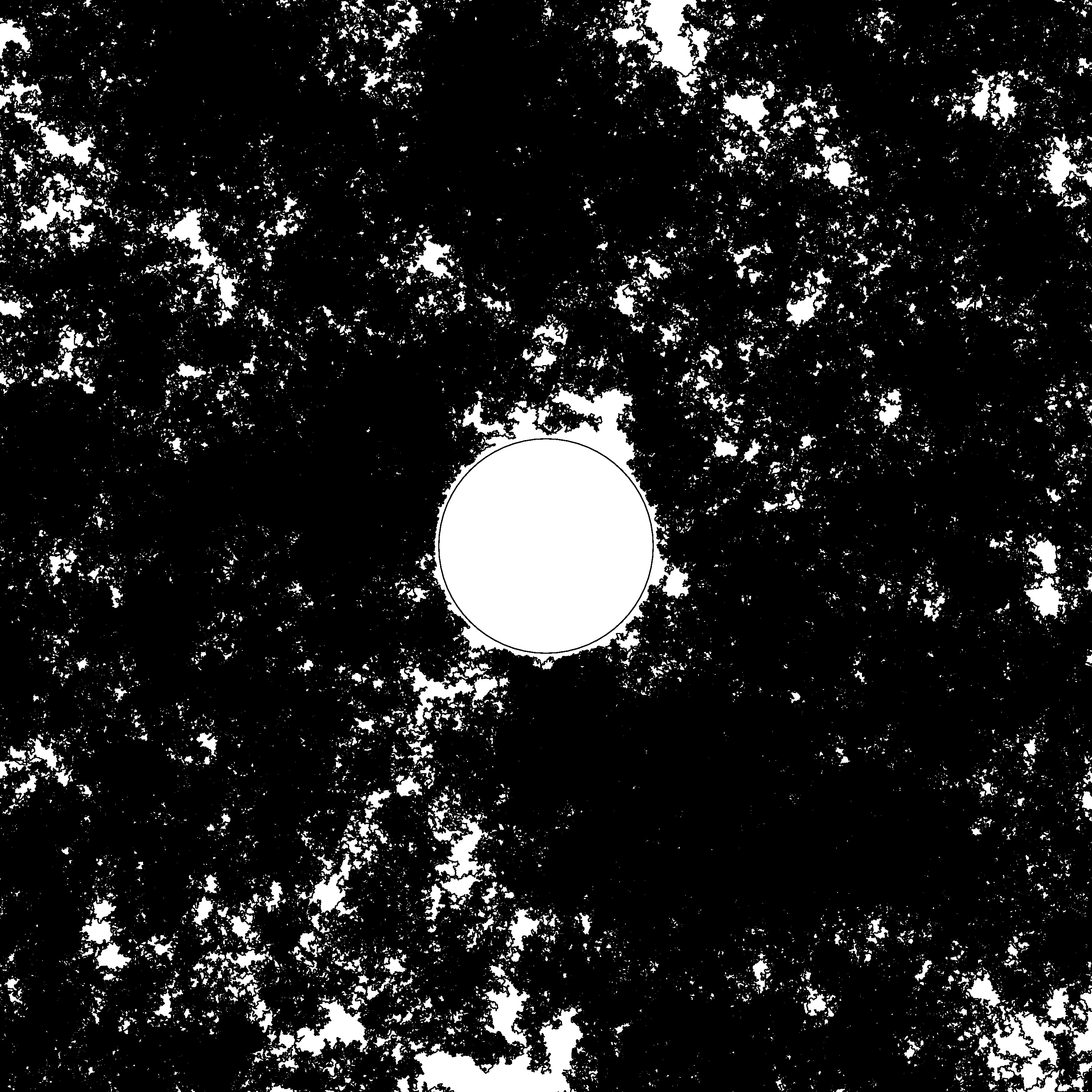}
\hfill
}
\caption{The vicinity of $x+\phi_d(t)E\subset\T^d\setminus W[0,t]$. The circular region in part (a) 
can be enlarged substantially while remaining a subset of $\T^d\setminus W[0,t]$, the circular
region in part (b) cannot.}
\lbfig{SparseVsDense}
\end{figure}

In a similar spirit, we can ask about \emph{temporal} versus \emph{spatial} avoidance 
strategies:

\begin{varquestion}
\lbquestion{Avoidance}
For a given $x\in\T^d$, does the unlikely event $\set{x+\phi_d(t)E\subset\T^d\setminus 
W[0,t]}$ arise primarily because the Brownian motion spends an unusually small amount 
of time near $x$, or because the Brownian motion spends a typical amount of time near 
$x$ and simply happens to avoid the set $x+\phi_d(t) E$?
\end{varquestion}

Questions \ref{q:LargerSubset} and \ref{q:Avoidance}, though not equivalent, are interrelated: 
if the Brownian motion spends an unusually small amount of time near $x$, then it may be 
plausibly expected to fill the vicinity of $x$ less densely, and vice versa. We will show 
that in $d \geq 3$ the Brownian motion follows a spatial avoidance strategy (the second 
alternative in \refquestion{Avoidance}) and that, indeed, the Brownian motion is very likely 
to spend approximately the same amount of time around all points of $\T^d$. In \refsubsubsect{2d} 
below we will argue that in $d=2$ the first alternative in \refquestion{Avoidance} applies.

\medskip\noindent
$\bullet$
The negative answer to \refquestion{LargerSubset} and the heuristic picture in 
\reffig{SparseVsDense}(b) suggest that regions of $\T^d$ where $W[0,t]$ is relatively 
dense nearly separate the large subsets $x+\phi_d(t)E\subset\T^d\setminus W[0,t]$ into 
disjoint components. Making sense of this heuristic is complicated by the fact that 
$\T^d\setminus W[0,t]$ is connected almost surely (see \refprop{Connected} below), 
so that all large subsets belong to the same connected component in $\T^d\setminus W[0,t]$.

\begin{varquestion}
\lbquestion{Components}
Can the approximate component structure of the large subsets of $\T^d\setminus W[0,t]$ 
be captured in a well-defined way?
\end{varquestion}

\noindent
We will provide a positive answer to \refquestion{Components} by enlarging the Brownian path 
$W[0,t]$ to a Wiener sausage $W_{\rho(t)}[0,t]$ of radius $\rho(t)=o(\phi_d(t))$. Under 
suitable hypotheses on the \emph{enlargement radius} $\rho(t)$ (see \eqref{RadiusBound} below) 
we are able to control certain properties of all the connected components of $\T^d\setminus 
W_{\rho(t)}[0,t]$ simultaneously: for instance, we compute the asymptotics of their maximum 
possible volume and capacity and minimal possible Dirichlet eigenvalue. The well-definedness 
of the approximate component structure lies in the fact that (subject to the hypothesis in 
\eqref{RadiusBound} below) these properties do not depend on the precise choice of $\rho(t)$.

The existence of a connected component of $\T^d\setminus W_{\rho(t)}[0,t]$ having a given 
property, for instance, having at least a specified volume, involves an uncountable union 
of the events in \eqref{ExistsTranslateEvent} as $E$ runs over a suitable class of connected 
sets. Central to our arguments is a \emph{discretization procedure} that reduces such an uncountable 
union to a suitably controlled finite union (see \refsect{LatticeAnimals} below).
 

\subsection{Outline}
\lbsubsect{Outl}

Our main results concern the \emph{extremal geometry} of the set $\T^d\setminus W[0,t]$ 
as $t\to\infty$. Our key theorem is a large deviation estimate for the \emph{shape of the 
component with largest capacity} in $\T^d \setminus W_{\rho(t)}[0,t]$ as $t\to\infty$, 
where $W_{\rho(t)}[0,t]$ is the Wiener sausage of radius $\rho(t)$. From this we derive 
large deviation principles for the maximal volume and the principal Dirichlet eigenvalue 
of the components of $\T^d \setminus W_{\rho(t)}[0,t]$ as $t\to\infty$, and identify the 
number of disjoint translates of $\phi_d(t)E$ in $\T^d \setminus W[0,t]$ as $t\to\infty$ for 
suitable sets $E$. We further derive large deviation principles for the largest inradius as 
$t\to\infty$ and the $\epsilon$-cover time as $\epsilon \downarrow 0$, extending laws of 
large numbers that were derived in Dembo, Peres and Rosen~\cite{DPR2003}. Along the
way we settle the five questions raised in \refsubsect{Mot}.

It turns out that the costs of the various large deviations are \emph{asymmetric}: polynomial 
in one direction and stretched exponential in the other direction. Our main results are linked 
by the heuristic that sets of the form $x+\phi_d(t)E$ appear according to a \emph{Poisson 
point process} with total intensity $t^{J_d(\Capa E)+o(1)}$, where $J_d$ is given by 
\eqref{JdkappaDefinition} below (see \reffig{JandI} below). 

The remainder of the paper is organised as follows. In \refsubsect{DefNot} we give definitions 
and introduce notations. In Sections~\ref{ss:CompStruct} and \ref{ss:GeomStruct} we state 
our main results: four theorems, five corollaries and two propositions. In \refsubsect{Discussion} 
we discuss these results, state some conjectures, make the link with random interlacements, 
and reflect on what happens in $d=2$. \refsect{Preparations} contains various estimates on 
hitting times, hitting numbers and hitting probabilities for Brownian excursions between the 
boundaries of concentric balls, which serve as key ingredients in the proofs of the main results. \refsect{LatticeAnimals} looks at non-intersection probabilities for lattice animals, which serve as discrete 
approximations to continuum sets. The proofs of the main results are given in 
Sections~\ref{s:ProofTheorems}--\ref{s:CoroProofs}. \refappendix{ExcursionProofs} contains 
the proof of two lemmas that are used along the way.


\subsection{Definitions and notations}
\lbsubsect{DefNot}


\subsubsection{Torus}

The $d$-dimensional \emph{unit torus} $\T^d$ is the quotient space $\R^d/\Z^d$, with the 
canonical projection map $\pi_0\colon\,\R^d\to\T^d$. We consider $\T^d$ as a Riemannian 
manifold in such a way that $\pi_0$ is a local isometry. The space $\R^d$ acts on $\T^d$ 
by translation: given $x=\pi_0(y_0)\in\T^d, y_0,y\in\R^d$, we define $x+y=\pi_0(y_0+y)\in
\T^d$. (Having made this definition, we will no longer need to refer to the projection 
map $\pi_0$, nor to the particular representation of the torus $\T^d$.) Given a set 
$E\subset\R^d$, a scale factor $\phi>0$, and a point $x\in\T^d$ or $x\in\R^d$, we can 
now define 
\begin{equation}
\label{AdditionOfSet}
x+\phi E=\set{x+\phi y\colon\,y\in E}.
\end{equation}

Euclidean distance in $\R^d$ and the induced distance in $\T^d$ are both denoted by 
$d(\cdot,\cdot)$. The distance from a point $x$ to a set $E$ is $d(x,E)=\inf\set{d(x,y)
\colon\,y\in E}$. The closed ball of radius $r$ around a point $x$ is denoted by $B(x,r)$, 
for $x\in\T^d$ or $x\in\R^d$. We will only be concerned with the case $0<r<\tfrac{1}{2}$, 
so that $B(x,r)=x+B(0,r)$ for $x\in\T^d$ and the local isometry $B(0,r)\to B(x,r)$, 
$y\mapsto x+y$, is one-to-one.


\subsubsection{Brownian motion and Wiener sausage}

We write $\P_{x_0}$ for the law of the Brownian motion $W=(W(t))_{t\geq 0}$ on $\T^d$ 
started at $x_0\in\T^d$, i.e., the Markov process with generator $-\tfrac{1}{2}\Delta_{\T^d}$, 
where $\Delta_{\T^d}$ is the Laplace operator for $\T^d$. We can always take $W(t)=x_0
+\tilde{W}(t)$, where $\tilde{W}=(\tilde{W}(t))_{t \geq 0}$ is the standard Brownian motion 
on $\R^d$ started at $0$, so that $W$ is the projection onto $\T^d$ (via $\pi_0$) of a Brownian motion in $\R^d$. When $x_0\in\R^d$ we will also use $\P_{x_0}$ for the law of the Brownian motion on $\R^d$. 
When the initial point $x_0$ is irrelevant we will write $\P$ instead of $\P_{x_0}$. The image 
of the Brownian motion over the time interval $[a,b]$ is denoted by $W[a,b]=\set{W(s)\colon\,
a\leq s\leq b}$.

For $r>0$ and $E\subset\R^d$ or $E\subset\T^d$, we write $E_r=\union_{x\in E} B(x,r)$ and 
$E_{-r}=[\union_{x\in E^c} B(x,r)]^c$. The \emph{Wiener sausage} of radius $r$ run for time 
$t$ is the $r$-enlargement of $W[0,t]$, i.e., $W_r[0,t]=\union_{s \in [0,t]} B(W(s),r)$.


\subsubsection{Capacity}
\lbsubsubsect{CapacityDefinition}

The (Newtonian) {\em capacity} of a Borel set $E\subset\R^d$, denoted by $\Capa E$, can be 
defined as
\begin{equation}
\label{CapacityDoubleInt}
\Capa E = \left( \inf_{\mu\in\mathcal{P}(E)} \iint_{E\times E} 
G(x,y) \, d\mu(x) \, d\mu(y)\right)^{-1},
\end{equation}
where the infimum runs over the set of probability measures $\mu$ on $E$, and
\begin{equation}
\label{Green}
G(x,y)=\frac{\Gamma(d/2 -1)}{2 \pi^{d/2} d(x,y)^{d-2}}
\end{equation}
is the Green function associated with Brownian motion on $\R^d$ (throughout the paper we 
restrict to $d \geq 3$). In terms of the constant $\kappa_d$ from \eqref{kappadDefinition}, 
we can write $G(x,y)=1/\kappa_d\,d(x,y)^{d-2}$, and it emerges that $\kappa_d=\Capa B(0,1)$ 
is the capacity of the unit ball.\footnote{See Port and Stone~\cite[Section 3.1]{PS1978}. 
The alternative normalization $\Capa B(0,1)=1$ is used also, for instance, in 
Doob~\cite[Chapter 1.XIII]{D1984}. This corresponds to replacing $G(x,y)$ by 
$1/d(x,y)^{d-2}$ in (\ref{CapacityDoubleInt}--\ref{Green}).} 

The function $E\mapsto\Capa E$ is non-decreasing in $E$ and satisfies the scaling relation
\begin{equation}
\label{CapacityScaling}
\Capa (\phi E) = \phi^{d-2} \Capa E, \qquad \phi>0,
\end{equation}
and the union bound
\begin{equation}
\label{CapacityOfUnion}
\Capa (E\union E') + \Capa (E\intersect E') \leq \Capa E+\Capa E'.
\end{equation}

Capacity has an interpretation in terms of Brownian hitting probabilities:
\begin{equation}
\label{CapacityAndHittingSimple}
\lim_{d(x,0)\to\infty} d(x,0)^{d-2}\,\P_x\big(W\cointerval{0,\infty}
\intersect E\neq\emptyset\big) 
= \frac{\Capa E}{\kappa_d}, \qquad E\subset\R^d\text{ bounded Borel.}
\end{equation}
Thus, capacity measures how likely it is for a set to be hit by a Brownian motion that starts 
far away. We will make extensive use of asymptotic properties similar to 
\eqref{CapacityAndHittingSimple}.

If a set $E$ is \emph{polar} -- i.e., with probability 1, $E$ is not hit by a Brownian motion 
started away from $E$ -- then $\Capa E=0$.  For instance, any finite or countable union 
of $(d-2)$-dimensional subspaces has capacity zero.


\subsubsection{Sets}

The boundary of a set $E$ is denoted by $\boundary E$, the interior by $\interior{E}$, and 
the closure by $\closure{E}$.  We define
\begin{equation}
\cE_c=\set{\text{$E\subset\R^d$ compact: $E$ and $\R^d \setminus E$ connected}}
\end{equation} 
We will use these sets to describe the possible components of $\T^d\setminus W_{\rho(t)}[0,t]$.  
We further define
\begin{equation}
\cE^* = \set{\text{$E\subset\R^d$ compact: $\Capa E=\Capa(\interior{E})$}} 
\union \set{\text{$E\subset\R^d$ bounded open: $\Capa E=\Capa(\closure{E})$}}.
\end{equation}
The condition $\Capa(\interior{E})=\Capa(\closure{E})$ in the definition of $\cE^*$ is satisfied 
when every point of $\boundary E$ is a regular point for $\interior{E}$, which in turn is satisfied 
when $E$ satisfies a cone condition at every point (see Port and Stone~\cite[Chapter~2, 
Proposition~3.3]{PS1978}).  In particular, any finite union of cubes, or any $r$-enlargement 
$E_r$ with $r>0$ of a compact set $E$, belongs to $\cE^*$.


\subsubsection{Maximal capacity of a component}

A central role will be played by the largest capacity $\kappa^*(t,\rho)$ for a component of 
$\T^d\setminus W_\rho[0,t]$, defined by
\begin{equation}
\kappa^*(t,\rho)=\sup\set{\Capa E\colon\, \text{$E\subset\R^d$ connected,
$x+E\subset\T^d\setminus W_\rho[0,t]$ for some $x\in\T^d$}}.
\end{equation}
Note that by rescaling we have
\begin{equation}
\frac{\kappa^*(t,\rho)}{\phi_d(t)^{d-2}}
= \sup\set{\Capa E\colon\,\text{$E\subset\R^d$ connected, $x+\phi_d(t)E\subset\T^d\setminus 
W_\rho[0,t]$ for some $x\in\T^d$}}.
\end{equation}


\subsection{Component structure}
\lbsubsect{CompStruct}

We begin by describing the component structure of $\T^d\setminus W_\rho[0,t]$. In 
formulating the results below we will use the abbreviation (see \reffig{JandI}(a)) 
\begin{equation}
\label{JdkappaDefinition}
J_d(\kappa)=\frac{d}{d-2}\left(1-\frac{\kappa}{\kappa_d}\right), \qquad \kappa \geq 0.
\end{equation}

\begin{figure}[htbp]
{
\hfill
(a)
\includegraphics{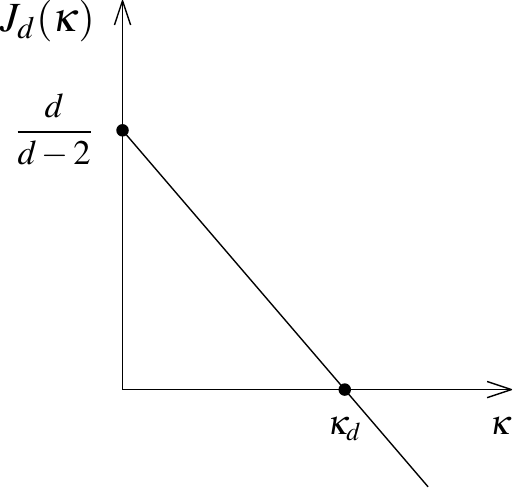}
\hfill
(b)
\includegraphics{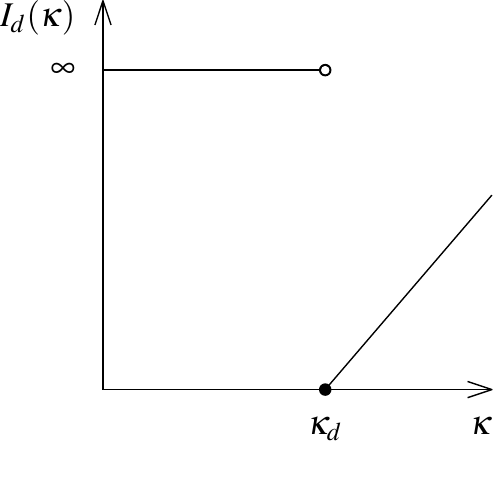}
\hfill
}
\caption{(a) The function $\kappa \mapsto J_d(\kappa)$ in \eqref{JdkappaDefinition}. 
(b) The rate function $\kappa \mapsto I_d(\kappa)$ in \eqref{Idrf}.}
\lbfig{JandI}
\end{figure}

Our first theorem quantifies the likelihood of finding sets of large capacity that do not intersect $W_\rho[0,t]$, for $\rho$ in a certain window between the local and the global spatial scales 
defined in \eqref{twoscales}.

\begin{theorem}
\lbthm{CapacitiesInWc}
Fix a positive function $t \mapsto \rho(t)$ satisfying
\begin{equation}
\label{RadiusBound}
\lim_{t\to\infty} \frac{\rho(t)}{\phi_d(t)} = 0,
\qquad
\lim_{t\to\infty} \frac{(\log t)^{1/d}\rho(t)}{\phi_d(t)} =\infty.
\end{equation}
Then the family $\P(\kappa^*(t,\rho)/\phi_d(t)^{d-2}\in\cdot)$, $t>1$, satisfies the 
LDP on $[0,\infty]$ with rate $\log t$ and rate function (see \reffig{JandI}(b))
\begin{equation}
\label{Idrf}
I_d(\kappa) =
\begin{cases}
-J_d(\kappa), & \kappa\geq \kappa_d,\\
\infty, & \kappa < \kappa_d,
\end{cases}
\end{equation}
with the convention that $I_d(\infty)=\infty$.
\end{theorem}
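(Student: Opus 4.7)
The plan is to establish matching LDP upper and lower bounds guided by the Poisson heuristic from the outline: translates $x+\phi_d(t)E$ contained in $\T^d\setminus W_\rho[0,t]$ appear asymptotically as a Poisson point process of total intensity $t^{J_d(\Capa E)+o(1)}$. The basic input is \refeq{CapacityAndHittingSimple}, adapted to the torus: a small set $A\subset\T^d$ is avoided by $W[0,t]$ with probability $\exp\bigl(-t\Capa A\,(1+o(1))\bigr)$. Applied to $A=x+\phi_d(t)E$, whose capacity equals $\phi_d(t)^{d-2}\Capa E$, this yields avoidance probability $t^{-d\Capa E/((d-2)\kappa_d)}$, and multiplication by the number $\phi_d(t)^{-d}\asymp t^{d/(d-2)}/(\log t)^{d/(d-2)}$ of nearly-independent trial positions gives an expected count of good $x$'s of order $t^{J_d(\Capa E)+o(1)}$.

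For the upper bound the plan is to prove that for $\kappa>\kappa_d$,
\begin{equation}
\P\bigl(\kappa^*(t,\rho)/\phi_d(t)^{d-2}\geq\kappa\bigr) \leq t^{-I_d(\kappa)+o(1)}.
\end{equation}
The event requires a connected compact $E'\subset\T^d\setminus W_{\rho(t)}[0,t]$ with $\Capa E'\geq\kappa\phi_d(t)^{d-2}$, and the difficulty is that this is an uncountable union over both positions and shapes. Via the discretization of \refsect{LatticeAnimals}, I would cover $\T^d$ by a mesh of cells of side length of order $\rho(t)$ and approximate $E'$ from within by a connected union $A$ of cells (a lattice animal) with $\Capa A\geq(\kappa-\epsilon)\phi_d(t)^{d-2}$. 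A union bound then reduces the problem to summing $\P(A\subset\T^d\setminus W_\rho[0,t])\leq\exp\bigl(-t\Capa A\,(1-o(1))\bigr)$ over all lattice animals, using the hitting estimates from \refsect{Preparations}; the number of lattice animals of $n$ cells is bounded by $C^n$, contributing only a subexponential factor and leaving the target rate $t^{-I_d(\kappa)+o(1)}$. The statement $I_d(\kappa)=\infty$ for $\kappa<\kappa_d$ is treated in the opposite direction: I would show $\P(\kappa^*(t,\rho)/\phi_d(t)^{d-2}\leq\kappa)=t^{-\omega(1)}$ by exhibiting, with super-polynomial probability, a translate of a ball of radius slightly less than $\phi_d(t)$ inside $\T^d\setminus W_\rho[0,t]$; the first moment of such translates grows polynomially in $t$ since $J_d(\kappa)>0$, and a second-moment independence argument converts this into a super-polynomial lower bound on the existence probability.

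For the lower bound the plan is to show that for $\kappa>\kappa_d$ and $\epsilon>0$,
\begin{equation}
\P\bigl(\kappa^*(t,\rho)/\phi_d(t)^{d-2} > \kappa-\epsilon\bigr) \geq t^{-I_d(\kappa)-o(1)}.
\end{equation}
As test shape I would take a ball $E=B(0,r)$ with $\kappa_d r^{d-2}=\kappa$, so that $E$ is connected and has capacity exactly $\kappa$; any $x$ with $x+\phi_d(t)E\subset\T^d\setminus W_\rho[0,t]$ witnesses $\kappa^*(t,\rho)/\phi_d(t)^{d-2}\geq\kappa$. Let $N$ count such $x$ on a grid of spacing of order $\phi_d(t)$. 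The heuristic above gives $\E[N]=t^{-I_d(\kappa)-o(1)}$, and a Paley--Zygmund second-moment estimate then yields $\P(N\geq 1)\geq\E[N]^2/\E[N^2]\geq\E[N](1-o(1))$, provided the paired event $\set{(x+\phi_d(t)E)\union(y+\phi_d(t)E)\subset\T^d\setminus W_\rho[0,t]}$ has probability asymptotically the product of the single-point probabilities whenever $d(x,y)\gg\rho(t)$. This asymptotic independence should be a consequence of Brownian mixing on $\T^d$ together with the excursion estimates of \refsect{Preparations}, which show that the avoidance excursions near $x$ decouple from those near $y$ at scales much larger than $\rho(t)$.

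The main obstacle is matching the constants in both bounds. In the upper bound the mesh scale must be fine enough that $\Capa A$ approximates $\Capa E'$ up to relative error $o(1)$, yet coarse enough that the avoidance estimate $\P(W_\rho[0,t]\intersect A=\emptyset)\leq\exp\bigl(-t\Capa A\,(1-o(1))\bigr)$ holds with negligible slack; this is precisely the role of the two-sided window hypothesis \refeq{RadiusBound} on $\rho(t)$. The condition $\rho(t)/\phi_d(t)\to 0$ guarantees that enlarging the Wiener sausage to radius $\rho(t)$ does not erode the macroscopic lakes whose capacities govern $\kappa^*(t,\rho)$, while $(\log t)^{1/d}\rho(t)/\phi_d(t)\to\infty$ ensures that $\rho(t)$ is large compared with the typical hole scale $\phi_{\mathrm{local}}(t)$, so that events at distinct lattice points genuinely decouple and the capacity--hitting correspondence is uniformly valid over the discretization.
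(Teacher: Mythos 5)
Your upper bound ($\kappa \geq \kappa_d$) mirrors the paper's discretization-and-union-bound strategy closely, and your Paley--Zygmund lower bound with a ball test set is a viable alternative route to the paper's derivation through \refthm{ShapeOfComponents} (though the two-point estimate for near pairs, of the kind in \reflemma{LargeDistantComponents}, still has to be carried out). However, your treatment of the regime $\kappa < \kappa_d$ contains a genuine gap.

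You propose to prove $\P\bigl(\kappa^*(t,\rho)/\phi_d(t)^{d-2} \leq \kappa\bigr) = t^{-\omega(1)}$ for $\kappa < \kappa_d$ by ``a second-moment independence argument''. Second moments cannot deliver this. Paley--Zygmund (or Chebyshev) gives at most $\P(N=0) \leq (\E N^2 - (\E N)^2)/(\E N)^2$; even for an exactly Poisson count of good positions this is of order $1/\E N = t^{-J_d(\kappa)+o(1)}$, which is only \emph{polynomially} small. The claim $I_d(\kappa)=\infty$ on $[0,\kappa_d)$ requires super-polynomial decay, and the paper in fact proves the stretched-exponential bound $\exp\bigl(-t^{J_d(\kappa)+o(1)}\bigr)$ via \refthm{NoTranslates}. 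Obtaining that bound takes structure a moment computation does not see: in \refprop{HitManyLatticeAnimals} one conditions on the excursion endpoints and excursion counts $N''_j$ at a grid of $\tilde n^d$ well-separated test points, uses that given this data the hitting events at distinct grid points are genuinely independent, bounds the conditional ``miss at every grid point'' probability $\prod_j \bigl(1-(1-q)^{N''_j}\bigr)$ using the concavity of $y \mapsto \log(1-e^{-cy})$ to replace each $N''_j$ by the empirical mean $\bar N''$, and then proves concentration of $\bar N''$ by reinterpreting $\sum_j N''_j$ as an excursion count for a Brownian motion on a dilated torus and invoking \refprop{ExcursionNumbers}. The conditional independence plus the concavity reduction are what produce the exponential-in-$t^{J_d(\kappa)}$ bound; you would need to replace your second-moment step with an argument of this type.
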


The counterpart of \refthm{CapacitiesInWc} for small capacities is contained in the following 
two theorems, which show that components of small capacity are likely to exist and to be numerous. 
Let $\chi_\rho(t,\kappa)$ denote the number of components $C$ of $\T^d\setminus W_\rho[0,t]$ 
such that $C$ contains some ball of radius $\rho$ and has the form $C=x+\phi_d(t)E$ 
for a connected open set $E$ with $\Capa E\geq\kappa$.

\begin{theorem}
\lbthm{ComponentCounts}
Fix a positive function $t\mapsto\rho(t)$ satisfying \eqref{RadiusBound}, and let $\kappa<\kappa_d$. 
Then
\begin{equation}
\lim_{t\to\infty}\frac{\log\chi_{\rho(t)}(t,\kappa)}{\log t}=J_d(\kappa) 
\qquad\text{in $\P$-probability.}
\end{equation}
\end{theorem}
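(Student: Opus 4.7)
The plan is to establish the logarithmic asymptotics by matching upper and lower bounds on $\chi_{\rho(t)}(t,\kappa)$, both of the form $t^{J_d(\kappa)+o(1)}$.

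For the \emph{upper bound} I would use a first moment argument. Every component $C$ counted by $\chi_{\rho(t)}(t,\kappa)$ contains a ball of radius $\rho(t)$, so I partition $\T^d$ into cells $Q$ of side length of order $\rho(t)$ and let $Y_Q$ be the indicator that $Q$ contains the centre of such a ball lying inside a counted component. Then $\chi_{\rho(t)}(t,\kappa)\leq\sum_Q Y_Q$. The event $\{Y_Q=1\}$ entails that some $x+\phi_d(t)E$ of capacity at least $\phi_d(t)^{d-2}\kappa$ is contained in $\T^d\setminus W_{\rho(t)}[0,t]$. Using the lattice-animal discretization from \refsect{LatticeAnimals} to reduce the uncountable family of admissible $E$ to a finite class, together with the Brownian hitting estimates from \refsect{Preparations}, this probability is at most $\exp\bigl(-(1-o(1))\,\Capa(\phi_d(t)E)\,t\bigr)=t^{-d\kappa/((d-2)\kappa_d)+o(1)}$. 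Summing over the $\phi_d(t)^{-d}=(t/\log t)^{d/(d-2)}$ cells (the polynomial overhead from the discretization being absorbed into $t^{o(1)}$) yields $\E[\sum_Q Y_Q]\leq t^{J_d(\kappa)+o(1)}$ since $\tfrac{d}{d-2}-\tfrac{d\kappa}{(d-2)\kappa_d}=J_d(\kappa)$. Markov's inequality then gives $\chi_{\rho(t)}(t,\kappa)\leq t^{J_d(\kappa)+\epsilon}$ with probability tending to $1$.

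For the \emph{lower bound} I would fix a concrete ball $E^{*}=B\bigl(0,(\kappa/\kappa_d)^{1/(d-2)}\bigr)$, so that $\Capa E^{*}=\kappa$, partition $\T^d$ into cells $Q_i$ whose centres are separated on a scale of order $\phi_d(t)$, and set $X_i=\mathbf{1}\{\exists\,x\in Q_i\colon\,x+\phi_d(t)E^{*}\subset\T^d\setminus W_{\rho(t)}[0,t]\}$. Because $\rho(t)=o(\phi_d(t))$ by \refeq{RadiusBound}, each such $x+\phi_d(t)E^{*}$ can be thickened by $\rho(t)$ and still sits inside a single connected component of $\T^d\setminus W_{\rho(t)}[0,t]$ whose rescaled capacity is at least $\kappa-\epsilon$. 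Choosing the cell centres well separated ensures that distinct cells correspond to distinct components, so $\chi_{\rho(t)}(t,\kappa-\epsilon)\geq\sum_i X_i$. The matching lower hitting estimates from \refsect{Preparations} give $\E[X_i]=t^{-d\kappa/((d-2)\kappa_d)+o(1)}$, hence $\E[\sum_i X_i]=t^{J_d(\kappa)+o(1)}$.

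The main obstacle is the variance control needed to promote this first moment estimate to a concentration statement via Chebyshev. Concretely, one must show that for cells $Q_i,Q_j$ whose centres are separated by $\gg\phi_d(t)$,
\[
\P(X_i=X_j=1)\leq (1+o(1))\,\P(X_i=1)\,\P(X_j=1)
\]
uniformly in such pairs. I expect this decoupling to follow from the excursion decomposition of Brownian motion between the boundaries of concentric balls developed in \refsect{Preparations}: successive excursions mix across $\T^d$ on a time scale much shorter than that needed to hit either small target, so the numbers of excursions that actually hit the two targets are, to leading order, independent. Granting this, $\mathrm{Var}(\sum_i X_i)$ is dominated by its diagonal contribution and Chebyshev yields $\sum_i X_i\geq\tfrac12\E[\sum_i X_i]$ with probability tending to $1$. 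Letting $\epsilon\downarrow 0$ and using monotonicity of $J_d$ in $\kappa$ concludes the lower bound, and hence the theorem.
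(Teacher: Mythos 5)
Your upper bound is essentially the paper's: the paper injects $\chi_{\rho(t)}(t,\kappa)$ into the lattice-animal count $\chi_+^\boxempty(t,n(t),\kappa)$ via \reflemma{SetsAndAnimals} and then invokes \refprop{AnimalCounts}, whose proof is exactly the first-moment-plus-Markov argument you sketch (with the grid living at scale $1/n(t)\asymp\rho(t)$ rather than a cell decomposition, which is a cosmetic difference). That part is fine.

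The lower bound has a genuine gap. You set $X_i=\indicator{\exists\,x\in Q_i\colon\,x+\phi_d(t)E^{*}\subset\T^d\setminus W_{\rho(t)}[0,t]}$ and claim that well-separated cells give distinct components, so that $\chi_{\rho(t)}(t,\kappa-\epsilon)\geq\sum_i X_i$. This inequality is false in general: nothing in the event $\{X_i=1\}$ prevents the component of $\T^d\setminus W_{\rho(t)}[0,t]$ that contains $x+\phi_d(t)E^{*}$ from being much larger than $\phi_d(t)$ and engulfing several of your cells at once, in which case many $X_i$'s count the \emph{same} component. Indeed the paper points out (\refsubsubsect{DiameterDiscussion}) that the maximal component diameter $D(t,\rho(t))$ satisfies $D(t,\rho(t))/\phi_d(t)\to\infty$ in probability, so this is not a pathological worry. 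Worse, the definition of $\chi_{\rho(t)}(t,\kappa)$ requires the component to have the form $x+\phi_d(t)E$ with $E$ a \emph{connected open subset of $\R^d$}, which also fails if the component wraps around the torus; your indicator does not see this either. What is missing is a confinement step: one must force the Brownian path to densely visit a shell around $x+\phi_d(t)E^{*}$, so that the component is trapped inside $x+\phi_d(t)E^{*}_\delta$ and the bijection between cells and components actually holds. In the paper this is exactly the content of the event $F_3(j)$ in \refsubsect{ShapeProof} and of \reflemma{FImplies} (with the delicate conditional-hitting estimate \reflemma{HitWhileAvoiding} behind it): one conditions on avoiding $x_j+(\phi E)_\rho$ and then shows that nearby small balls are nevertheless hit with high probability, which is the nontrivial part. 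Without this ingredient, the second-moment/Chebyshev machinery you propose controls $\sum_i X_i$, not $\chi_{\rho(t)}(t,\kappa)$, and the lower bound does not follow. (As a secondary remark, the paper sidesteps the second-moment argument you outline by conditioning on the excursion endpoints $(\xi'_i(x_j),\xi_i(x_j))$, after which the events at well-separated $x_j$'s are exactly conditionally independent because the balls $B(x_j,R)$ are disjoint; this gives a clean Binomial lower bound without needing the asymptotic decoupling you assert but do not prove.)
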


\begin{theorem}
\lbthm{NoTranslates}
Fix a non-negative function $t\mapsto\rho(t)$ satisfying $\rho(t)=o(\phi_d(t))$, and let $E\subset\R^d$ be compact with $\Capa E <\kappa_d$. Then
\begin{equation}
\log\P\left( \nexists\, x\in\T^d\colon\, x+\phi_d(t)E\subset\T^d\setminus W_{\rho(t)}[0,t] \right)
\leq -t^{J_d(\Capa E)+o(1)}, \qquad t\to\infty.
\end{equation}
\end{theorem}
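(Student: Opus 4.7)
Set $\alpha:=J_d(\Capa E)$, which is strictly positive since $\Capa E<\kappa_d$. My plan is to exhibit a deterministic family of $|\Lambda|=t^{d/(d-2)+o(1)}$ candidate translates of $\phi_d(t)E$, each failing to lie in $\T^d\setminus W_{\rho(t)}[0,t]$ with probability at most $1-t^{-d/(d-2)+\alpha+o(1)}$, in such a way that the failure events are \emph{approximately independent}. A product bound will then yield the claim, since $|\Lambda|\cdot t^{-d/(d-2)+\alpha+o(1)}=t^{\alpha+o(1)}$.

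I would first regularize: replacing $E$ by $E_{\eta\phi_d(t)}$ for a small fixed $\eta>0$ changes $\Capa E$ by $o(1)$ (absorbable into the exponent) and places $E\in\cE^*$ with non-empty interior, so that the Brownian hitting estimates of \refsect{Preparations} can be applied uniformly. Next I would fix a grid $\Lambda\subset\T^d$ of spacing $L=L(t)$ satisfying $\phi_d(t)\ll L\ll 1$ with $L/\phi_d(t)$ growing at a polylogarithmic rate, so that $|\Lambda|=L^{-d}=t^{d/(d-2)+o(1)}$. Writing $A_x:=\set{x+\phi_d(t)E\subset\T^d\setminus W_{\rho(t)}[0,t]}$, the non-existence event is contained in $\bigcap_{x\in\Lambda}A_x^c$.

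For the single-site estimate, the Brownian hitting asymptotics combined with the capacity-scaling identity \eqref{CapacityScaling} and the definition \eqref{phidtDefinition} of $\phi_d(t)$ give, uniformly in $x\in\Lambda$,
\[
\P(A_x)=\exp\bigl(-t\,\Capa((\phi_d(t)E)_{\rho(t)})(1+o(1))\bigr)=t^{-d/(d-2)+\alpha+o(1)},
\]
where the hypothesis $\rho(t)=o(\phi_d(t))$ ensures that the $\rho(t)$-enlargement only perturbs the capacity at order $o(1)$. The crux of the argument is then to establish the conditional bound
\[
\P\bigl(A_{x_j}^c\,\big\vert\,\{A_{x_i}^c\}_{i<j}\bigr)\leq 1-\P(A_{x_j})(1-o(1)) \qquad \text{for every } j,
\]
after enumerating $\Lambda=\set{x_1,\ldots,x_N}$. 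I would obtain this by decomposing the Brownian path into its excursions into $B(x_j,L/3)$ via the strong Markov property: since the conditioning events $\{A_{x_i}^c\}_{i<j}$ constrain the path only outside $B(x_j,L/3)$, the number of excursions in time $t$ remains concentrated around $\asymp t/L^{d-2}$, and each hits $x_j+(\phi_d(t)E)_{\rho(t)}$ with the same leading-order probability as in the unconditioned model. Multiplying the conditional bounds finishes the argument.

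The main obstacle is precisely this conditional-independence step, because the events $\{A_{x_i}^c\}_{i<j}$ encode complicated global constraints on the path, and one must verify that they do not bias either the excursion count near $x_j$ or the individual hitting probabilities beyond the allowed $o(1)$ tolerance. I expect this to be handled by first intersecting with a high-probability ``ergodic'' event on which the Brownian occupation time in each grid box is within $(1\pm o(1))$ of its mean; standard mixing estimates on $\T^d$ should make the failure probability of this event much smaller than $\exp(-t^{\alpha})$, so the restriction costs nothing asymptotically, and on the good event the excursion counts and hitting probabilities become essentially deterministic.
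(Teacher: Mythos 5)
Your overall architecture---a grid of $t^{d/(d-2)+o(1)}$ candidate translates, a per-site hitting estimate of order $t^{-d/(d-2)+\alpha+o(1)}$, and approximate independence via excursion decomposition---matches the paper's route through \refthm{HitManySets} and \refprop{HitManyLatticeAnimals}. But the step you flag as ``the main obstacle'' is exactly where the argument fails as sketched, and the remedy you propose would not repair it.

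Concretely: the failure probability of any single excursion-count concentration bound from \refprop{ExcursionNumbers} is $\exp(-cN_d(t,r,R))$ with $N_d(t,r,R)=t^{\delta+o(1)}$ for the small exponent $\delta$ that fixes the nested radii. This is a \emph{larger} error than the target $\exp(-t^{\alpha})$ with $\alpha=J_d(\Capa E)$, which can approach $d/(d-2)$. So your assertion that an ``ergodic'' occupation-time event ``costs nothing asymptotically'' is incorrect: restricting to such an event already blows the budget, and no amount of ``standard mixing on $\T^d$'' rescues a union bound over the $K=t^{d/(d-2)-O(\delta)}$ grid points. The paper flags this explicitly before the proof of \refprop{HitManyLatticeAnimals} (``an additional argument is needed''). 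The missing ingredient is two-fold: (i) after conditioning on excursion endpoints, the per-excursion hitting probability gives a product bound of the form $\prod_j(1-(1-p)^{N''_j})$; exploiting the concavity of $y\mapsto\log(1-e^{cy})$, one replaces the individual counts $N''_j$ by their \emph{empirical mean} $\bar N''$ via Jensen; and (ii) $\tilde n^d\bar N''$ is then reinterpreted, by projecting to a torus of side $1/\tilde n$ and Brownian rescaling, as a \emph{single} excursion count $N'$ on a unit torus run for time $\tilde n^2 t$, so that \refprop{ExcursionNumbers} gives a failure probability $\exp(-c\,t^{d/(d-2)-O(\delta)})$, which genuinely is negligible against $\exp(-t^\alpha)$ once $\delta$ is small. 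Without this Jensen-plus-rescaling device you cannot control the excursion counts uniformly over the grid at the required scale, and the sequential conditioning $\P(A^c_{x_j}\mid\{A^c_{x_i}\}_{i<j})$ you invoke does not by itself produce it. Also note the paper achieves the ``clean'' conditional independence not by sequential conditioning but by stopping at $T''=\inf\{s:\sigma(s)\geq t\}$, the time when the clock \emph{outside} all excursion windows reaches $t$; this makes the excursion numbers compatible with conditioning on endpoints, which your formulation using the raw time horizon $[0,t]$ does not cleanly give.
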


The next theorem identifies the shape of the components of $\T^d \setminus W_{\rho(t)}[0,t]$. 
For $E\subset E'$ a pair of nested compact connected subsets of $\R^d$, we say that a component 
$C$ of $\T^d\setminus W_\rho[0,t]$ satisfies condition \eqref{CtrhoEEprime} when
\begin{equation*}
\tag{$\cC(t,\rho,E,E')$}
\label{CtrhoEEprime}
C=x+\phi_d(t)U, \qquad x\in\T^d,\,E\subset U\subset E'.
\end{equation*}
Define $\chi_\rho(t,E,E')$ to be the number of components of $\T^d\setminus W_\rho[0,t]$ 
satisfying condition \eqref{CtrhoEEprime}, and define $F_\rho(t,E,E')$ to be the event 
\begin{equation}
F_\rho(t,E,E')=
\set{\parbox{275pt}{There exists a component $C=x+\phi_d(t) U$ of $\T^d\setminus W_\rho[0,t]$ 
satisfying condition \eqref{CtrhoEEprime}, and any other component $C'=x'+\phi_d(t) U'$ 
has $\Capa U'<\Capa U$.}}
\end{equation}
In words, $F_\rho(t,E,E')$ is the event that $\T^d\setminus W_\rho[0,t]$ contains a component 
sandwiched between $x+\phi_d(t) E$ and $x+\phi_d(t)E'$, and any other component has smaller 
capacity (when viewed as a subset of $\R^d$).

\begin{theorem}
\lbthm{ShapeOfComponents}
Fix a positive function $t\mapsto\rho(t)$ satisfying \eqref{RadiusBound}, let $E\in\cE_c$, and 
let $\delta>0$. If $\Capa E\geq\kappa_d$, then
\begin{equation}
\lim_{t\to\infty}\frac{\log\P\bigl(F_{\rho(t)}(t,E,E_\delta)\bigr)}{\log t}
= J_d(\Capa E) \quad (=-I_d(\Capa E)),
\end{equation}
while if $\Capa E<\kappa_d$, then
\begin{equation}
\lim_{t\to\infty}\frac{\log \chi_{\rho(t)}(t,E,E_\delta)}{\log t} 
= J_d(\Capa E) \qquad\text{in $\P$-probability.}
\end{equation}
\end{theorem}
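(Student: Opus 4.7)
The plan is to prove the four bounds---upper and lower, for each of the cases $\Capa E \geq \kappa_d$ and $\Capa E < \kappa_d$---by combining the monotonicity of capacity under inclusion with the large deviation results already available.

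\textbf{Upper bounds.} On the event $F_{\rho(t)}(t, E, E_\delta)$, the sandwiched component has the form $C = x + \phi_d(t) U$ with $E \subset U$, so $\Capa U \geq \Capa E$. For $\Capa E \geq \kappa_d$ this forces $\kappa^*(t,\rho(t))/\phi_d(t)^{d-2} \geq \Capa E$, and the LDP upper bound of \refthm{CapacitiesInWc} applied to the closed set $[\Capa E,\infty]$ gives the desired bound, since $I_d$ attains its minimum on that range at $\kappa=\Capa E$. For $\Capa E<\kappa_d$, the same monotonicity gives $\chi_{\rho(t)}(t,E,E_\delta) \leq \chi_{\rho(t)}(t,\Capa E - \epsilon)$ for every $\epsilon>0$, and \refthm{ComponentCounts} combined with the continuity of $J_d$ (letting $\epsilon\downarrow 0$) yields the matching upper bound in probability.

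\textbf{Lower bounds.} The core construction fixes $x_0\in\T^d$ and estimates the probability of the event $A(x_0)$ that the component of $\T^d\setminus W_{\rho(t)}[0,t]$ containing $x_0+\phi_d(t)E$ is contained in $x_0+\phi_d(t)E_\delta$. I decompose $A(x_0)$ as the intersection of (i) $x_0+\phi_d(t)E \subset \T^d\setminus W_{\rho(t)}[0,t]$ and (ii) the collar isolation condition that the Wiener sausage crosses the annular shell $x_0+\phi_d(t)(E_\delta\setminus E_{\delta/2})$ densely enough to separate $x_0+\phi_d(t)E$ from the exterior of $x_0+\phi_d(t)E_\delta$. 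By the Brownian excursion and hitting estimates to be developed in \refsect{Preparations}, event (i) has probability of order $t^{-d\Capa E/((d-2)\kappa_d)}$, and event (ii) conditional on (i) contributes only a factor $t^{-o(1)}$ because outside $x_0+\phi_d(t)E$ the sausage retains its typical density at scale $\phi_d(t)$. Summing over $x_0$ in a $\phi_d(t)$-separated discretization of $\T^d$ of cardinality $\phi_d(t)^{-d} \approx t^{d/(d-2)+o(1)}$ yields an expected count of order $t^{J_d(\Capa E)+o(1)}$ sandwiched components. For $\Capa E<\kappa_d$ (part (b)), a second-moment argument exploiting approximate independence of well-separated boxes, carried out using the lattice-animal framework of \refsect{LatticeAnimals}, converts this into an in-probability lower bound. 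For $\Capa E\geq\kappa_d$ (part (a)), a single successful $x_0$ suffices, and the ``unique largest'' clause in the definition of $F_{\rho(t)}(t,E,E_\delta)$ is handled by subtracting the negligible probability, bounded via \refthm{CapacitiesInWc} by $t^{J_d(\Capa E_\delta)+o(1)}$, of a competing component with capacity exceeding $\phi_d(t)^{d-2}\Capa E_\delta$; this is strictly smaller than $t^{J_d(\Capa E)}$ since $J_d$ is strictly decreasing and $\Capa E_\delta > \Capa E$ for $\delta>0$.

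\textbf{Main obstacle.} The hardest step is quantifying the collar isolation (ii): conditional on the rare event of avoiding $x_0+\phi_d(t)E$, the Wiener sausage must still cut off the interior of $x_0+\phi_d(t)E_\delta$ from the exterior. This is a refined ``density of the Wiener sausage at scale $\phi_d(t)$'' statement and will rely on the hitting estimates of \refsect{Preparations}, together with a careful argument that the conditioning in (i) does not deplete sausage activity in the disjoint collar region. Both sides of the window \eqref{RadiusBound} on $\rho(t)$ enter crucially here: $\rho(t)=o(\phi_d(t))$ prevents the sausage thickness from contaminating $E$, while $(\log t)^{1/d}\rho(t)/\phi_d(t)\to\infty$ guarantees enough sausage coverage to effect the isolation. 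A secondary subtlety is matching the definition \eqref{CtrhoEEprime} of $U$ with $E\subset U\subset E_\delta$: one must construct such a $U$ via the lattice-animal approximation and control $\Capa U\in[\Capa E,\Capa E_\delta]$, using that $\Capa E_\delta \downarrow \Capa E$ as $\delta\downarrow 0$ by regularity of $E\in\cE_c$.
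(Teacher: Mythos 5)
The upper-bound half of your proposal is correct and follows the paper's route: the paper likewise observes that the upper bounds are inherited from Theorems \ref{t:CapacitiesInWc}--\ref{t:ComponentCounts}. Your lower-bound architecture -- avoid $x_0+\phi_d(t)E$, force coverage of a collar, sum over $\sim t^{d/(d-2)}$ well-separated base points, and for $\Capa E<\kappa_d$ deploy a second-moment argument -- also matches the paper's excursion-based construction in spirit. However, there are two genuine gaps in your lower bound.

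First, the uniqueness clause in $F_\rho(t,E,E_\delta)$ requires \emph{every} other component $C'=x'+\phi_d(t)U'$ to have $\Capa U'<\Capa U$, where $U$ is the sandwiched shape with $\Capa U\geq\Capa E$ but possibly as small as $\Capa E$. Your subtraction of $\P(\exists\text{ competitor with capacity}\geq\Capa E_\delta)\leq t^{J_d(\Capa E_\delta)+o(1)}$ leaves unaddressed competitors with capacity in $[\Capa E,\Capa E_\delta)$; and replacing the threshold by $\Capa E$ yields, via \refthm{CapacitiesInWc}, a probability of the \emph{same} order $t^{-I_d(\Capa E)+o(1)}$ as the target, so the subtraction cancels. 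The paper instead estimates the \emph{joint} probability that the sandwiched component exists \emph{and} a far-away competitor of capacity $\geq\Capa E$ exists via \reflemma{LargeDistantComponents}, obtaining $t^{-d\Capa E/[(d-2)\kappa_d]-I_d(\Capa E)+o(1)}=t^{-2I_d(\Capa E)+O(\eta)}$, which is of strictly smaller order (here $\Capa E>\kappa_d$ is used essentially); it also needs the separate event $F_4(j)$ -- covering a thin $(d-1)$-dimensional shell $Z$ around $(\phi E)_\rho$ -- to chop any \emph{nearby} uncovered regions (inside $B(x_j,a\phi)$ but outside $(\phi E)_\rho$) into pieces of capacity $<\kappa_d\eta^{d-2}\phi^{d-2}<\phi^{d-2}\Capa E$. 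Your collar isolation only controls what happens outside $E_\delta$, not the potentially large pockets strictly between $E$ and the collar.

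Second, your claim that collar coverage ``conditional on (i) contributes only a factor $t^{-o(1)}$ because outside $x_0+\phi_d(t)E$ the sausage retains its typical density'' is the crux, and is not self-evident: conditioning on non-intersection with $x_0+(\phi E)_\rho$ does deplete Brownian occupation near $E$. The paper's \reflemma{HitWhileAvoiding} provides the missing quantitative estimate -- uniformly over excursion endpoints, the conditional probability per excursion of hitting a $\tfrac12\rho$-ball at $x+\phi y$ is at least $\epsilon(\phi/r)^{d-2}(\rho/\phi)^{d-2}$ for $y\notin E_\delta$, and at least $\epsilon(\phi/r)^{d-2}(\rho/\phi)^\alpha$ (a worse exponent $\alpha>d-2$, via Brownian cone estimates) for $y$ near $E$ -- and a nontrivial computation using both inequalities in \eqref{RadiusBound} then shows $\P(F_3(j)\mid F_2(j))=1-o(1)$ and $\P(F_4(j)\mid F_2(j))=t^{o(1)}$. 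You have correctly flagged this as the main obstacle, but without that lemma (or an equivalent) the $t^{-o(1)}$ claim does not hold.
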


Theorems~\ref{t:CapacitiesInWc}--\ref{t:ShapeOfComponents} yield the following corollary. For 
$E\subset\R^d$, let $\chi(t,E)$ denote the maximal number of disjoint translates $x+\phi_d(t)E$ 
in $\T^d\setminus W[0,t]$.

\begin{corollary}
\lbcoro{UnhitSet}
Suppose that $E\in\cE^*$. Then
\begin{equation}
\label{HittingSetProbSimple}
\lim_{t\to\infty} \P\left(\exists\, x\in\T^d\colon\, 
x+\phi_d(t) E\subset\T^d\setminus W[0,t]\right) 
=
\begin{cases}
1,&\Capa E<\kappa_d,\\
0,&\Capa E>\kappa_d.
\end{cases}
\end{equation}
Furthermore,
\begin{equation}
\label{HittingLargeSetProb}
\lim_{t\to\infty} \frac{\log\P(\exists\, x\in\T^d\colon\, 
x+\phi_d(t) E\subset\T^d\setminus W[0,t])}{\log t}= J_d(\Capa E) \vee 0,
\end{equation}
and if $\Capa E<\kappa_d$, then
\begin{equation}
\label{DisjointTranslates}
\lim_{t\to\infty} \frac{\log\chi(t,E)}{\log t} = J_d(\Capa E)
\quad \text{in $\P$-probability.}
\end{equation}
\end{corollary}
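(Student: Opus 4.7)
The strategy is to transfer statements about the Wiener sausage $W_{\rho(t)}[0,t]$, where Theorems~\ref{t:CapacitiesInWc}--\ref{t:ShapeOfComponents} operate, to the bare path $W[0,t]$ appearing in the corollary, exploiting the regularity built into the class $\cE^*$. For any $E \in \cE^*$ and $\eta > 0$, the defining property $\Capa E = \Capa(\interior E)$ (or $\Capa E = \Capa(\closure E)$ in the open case) supplies a compact connected \emph{inner approximant} $K \subset \interior E$ with $\Capa K \ge \Capa E - \eta$ and $\alpha := d(K, \R^d \setminus \interior E) > 0$. Because $\rho(t) = o(\phi_d(t))$ (cf.\ \refeq{RadiusBound}), one has $\rho(t) < \alpha\phi_d(t)$ for large $t$, so the inclusion $x + \phi_d(t) E \subset \T^d \setminus W[0,t]$ forces $x + \phi_d(t) K \subset \T^d \setminus W_{\rho(t)}[0,t]$. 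Analogously, a small fattening and smoothing of $E$ produces a compact connected \emph{outer approximant} $K' \supset E$ lying in $\cE_c$ with $\Capa K' \le \Capa E + \eta$, yielding the reverse passage from sausage to path.

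Given this bridge, the three assertions of the corollary reduce to the earlier theorems. For the $\Capa E < \kappa_d$ half of \refeq{HittingSetProbSimple}, apply \refthm{NoTranslates} with $\rho(t) = 0$ to $E$ (or $\closure E$, if $E$ is open): the complement event has probability $\le \exp(-t^{J_d(\Capa E)+o(1)}) \to 0$, and this simultaneously yields $\log \P/\log t \to 0$, matching the corresponding regime of \refeq{HittingLargeSetProb}. For $\Capa E > \kappa_d$, choose the inner approximant with $\Capa K > \kappa_d$, so that $\kappa^*(t,\rho(t))/\phi_d(t)^{d-2} \ge \Capa K$ on the event in question; the upper-bound half of the LDP in \refthm{CapacitiesInWc} then gives $\log \P / \log t \le -I_d(\Capa K) + o(1) = J_d(\Capa K) + o(1)$, and sending $\eta \downarrow 0$ sends $J_d(\Capa K) \to J_d(\Capa E)$ by continuity of $J_d$. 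The matching lower bound in \refeq{HittingLargeSetProb} comes from \refthm{ShapeOfComponents} applied to the outer approximant $K'$: the event $F_{\rho(t)}(t, K', K'_\delta)$ has probability $\ge t^{J_d(\Capa K')+o(1)}$, and on this event there is a component of $\T^d \setminus W_{\rho(t)}[0,t]$ of the form $x + \phi_d(t) U$ with $K' \subset U$, so $x + \phi_d(t) E \subset x + \phi_d(t) U \subset \T^d \setminus W[0,t]$.

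Statement \refeq{DisjointTranslates} is handled by the same machinery. For the lower bound, \refthm{ShapeOfComponents} applied to $K'$ supplies $\chi_{\rho(t)}(t, K', K'_\delta) = t^{J_d(\Capa K')+o(1)}$ pairwise disjoint components of $\T^d\setminus W_{\rho(t)}[0,t]$, each containing a translate of $\phi_d(t) E$ (since $E \subset K' \subset U$), and therefore at least as many pairwise disjoint translates of $\phi_d(t) E$ in $\T^d \setminus W[0,t]$; letting $\eta \downarrow 0$ raises the exponent to $J_d(\Capa E)$. For the upper bound, each of the $\chi(t,E)$ disjoint translates $x_i + \phi_d(t) E$ contains an inner approximant $x_i + \phi_d(t) K$ which, by the bridge, sits in a connected component $C_i$ of $\T^d \setminus W_{\rho(t)}[0,t]$ of rescaled capacity $\ge \Capa K$. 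Combining \refthm{CapacitiesInWc} (to discard the rare components of rescaled capacity $\ge \kappa_d + \eta$) with \refthm{ShapeOfComponents} (to count the rest, which are sandwiched between $K$ and $K_\delta$), one obtains at most $t^{J_d(\Capa K)+o(1)}$ such $C_i$, and each absorbs only $O(1)$ disjoint translates of $\phi_d(t) E$ by the volume bound $\Vol(C_i) \le \Vol(\phi_d(t) K_\delta) = O(\phi_d(t)^d)$. Sending $\eta \downarrow 0$ closes the gap.

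\emph{The principal obstacle} lies in executing the capacity approximation cleanly. One must verify that both inner and outer approximants of a general $E \in \cE^*$ can be chosen in the restrictive class $\cE_c$ (compact connected with connected complement) required by \refthm{ShapeOfComponents}, while controlling the capacity gap with $E$; handling compact $E$ with disconnected complement or disconnected interior will force a decomposition into topologically nicer pieces. A secondary subtlety is the $O(1)$-per-component bound used in the upper half of \refeq{DisjointTranslates}: this requires diameter/volume control of the relevant components of $\T^d \setminus W_{\rho(t)}[0,t]$, which is not stated explicitly in the main theorems but can be extracted from the sandwich event $F_{\rho(t)}(t,\cdot,\cdot)$ of \refthm{ShapeOfComponents}.
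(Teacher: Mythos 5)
Your lower-bound argument (outer approximant $K'\in\cE_c$ plus \refthm{ShapeOfComponents}) is essentially the paper's, and works. The upper bounds, however, contain a genuine gap that you correctly flag as the ``principal obstacle'' but do not resolve: you route the upper bound of \refeq{HittingLargeSetProb} through $\kappa^*(t,\rho(t))$ and \refthm{CapacitiesInWc}, which requires a \emph{connected} inner approximant $K\subset\interior E$ with $\Capa K$ close to $\Capa E$. Such a $K$ need not exist. If, say, $E$ is a union of two well-separated balls each of capacity $\kappa_d-\epsilon$, then $\Capa E$ is close to $2\kappa_d-2\epsilon>\kappa_d$, but every connected subset of $\interior E$ lies in one ball and has capacity $<\kappa_d$. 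So for such $E\in\cE^*$ your inequality $\kappa^*(t,\rho(t))/\phi_d(t)^{d-2}\ge\Capa K$ cannot be arranged with $\Capa K>\kappa_d$, and the upper bound collapses. The paper sidesteps this precisely by not going through $\kappa^*$: it shrinks $E$ to $E_{-2r}$, replaces each of its finitely many components by a rescaled lattice animal, and invokes \reflemma{HitFiniteUnionOfLAs} (for \refeq{HittingLargeSetProb}) and \reflemma{FiniteUnionOfLACounts} (for \refeq{DisjointTranslates}), both of which are stated for finite unions of nearby lattice animals and thus tolerate disconnected interiors. Your sketch never invokes either lemma, so this case is not covered.

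A secondary issue is in your upper bound for \refeq{DisjointTranslates}: the assertion $\Vol(C_i)\le\Vol(\phi_d(t)K_\delta)$ is unjustified, since a component $C_i$ of $\T^d\setminus W_{\rho(t)}[0,t]$ containing $x_i+\phi_d(t)K$ need not be sandwiched between $x_i+\phi_d(t)K$ and $x_i+\phi_d(t)K_\delta$ (the event $F_{\rho(t)}(t,\cdot,\cdot)$ controls only the largest-capacity component). What you actually want is: after discarding the rare components of rescaled capacity $\ge\kappa_d+\eta$, the remaining components have capacity $O(\phi_d(t)^{d-2})$, hence by \eqref{CapaVolEval} volume $O(\phi_d(t)^d)$, so each can absorb only $O(1)$ disjoint translates of $\phi_d(t)E$ --- but this last step still requires $\Vol E>0$, and it again rests on a per-component capacity bound which, via \refthm{ComponentCounts}, once more demands a connected inner approximant. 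The paper's \reflemma{FiniteUnionOfLACounts} bounds $\chi(t,E)$ directly without ever counting components, which is why it is the cleaner route.
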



\subsection{Geometric structure}
\lbsubsect{GeomStruct}

Having described the components in terms of their capacities in \refsubsect{CompStruct}, 
we are ready to look at the geometric structure of our random set. Our first corollary 
concerns the maximal volume of a component of $\T^d\setminus W_\rho[0,t]$, which we denote 
by $V(t,\rho)$. Volume is taken w.r.t.\ the $d$-dimensional Lebesgue measure, and we write 
$V_d=\Vol B(0,1)$ for the volume of the $d$-dimensional unit ball.

\begin{corollary}
\lbcoro{VolumeLDP}
Subject to \eqref{RadiusBound}, the family $\P(V(t,\rho(t))/\phi_d(t)^d\in\cdot)$, $t>1$, 
satisfies the LDP on $(0,\infty)$ with rate $\log t$ and rate function
\begin{equation}
I^{\rm volume}_d(v) = I_d\left( \kappa_d (v/V_d)^{(d-2)/d} \right) \! .
\end{equation}
Moreover, for $v<V_d$,
\begin{equation}
\label{SmallVolume}
\log \P\big(V(t,\rho(t))/\phi_d(t)^d < v\big) 
\leq -t^{J_d(\kappa_d(v/V_d)^{(d-2)/d})+o(1)}, 
\qquad t\to\infty.
\end{equation}
\end{corollary}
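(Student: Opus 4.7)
The plan is to transfer the LDP for maximal capacity (\refthm{CapacitiesInWc}) to maximal volume via the classical isoperimetric inequality for Newtonian capacity,
\begin{equation*}
\Capa E \geq \kappa_d (\Vol E/V_d)^{(d-2)/d},
\end{equation*}
with equality attained by Euclidean balls. Under the rescaling $C = x + \phi_d(t)U$, the rescaled capacity $\Capa C/\phi_d(t)^{d-2} = \Capa U$ and rescaled volume $\Vol C/\phi_d(t)^d = \Vol U$ satisfy this same inequality, so the change of variables $\kappa = \kappa_d(v/V_d)^{(d-2)/d}$ matches the two rate functions exactly, $I^{\rm volume}_d(v) = I_d(\kappa_d(v/V_d)^{(d-2)/d})$. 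The isoperimetric inequality also gives the set-theoretic inclusion $\{V(t,\rho)/\phi_d(t)^d \geq v\} \subset \{\kappa^*(t,\rho)/\phi_d(t)^{d-2} \geq \kappa_d(v/V_d)^{(d-2)/d}\}$, so the upper bound in \refthm{CapacitiesInWc} yields the LDP upper bound for rescaled volumes on $[V_d,\infty)$; the super-polynomial decay on $(0,V_d)$ needed to complete the LDP upper bound is supplied by \eqref{SmallVolume}, proved below.

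\textbf{Lower bound.} Only values $v_0 \geq V_d$ give a non-trivial LDP lower bound. Given such $v_0$, set $r = (v_0/V_d)^{1/d} \geq 1$ and take $E = B(0,r)$, whose capacity $\Capa E = \kappa_d r^{d-2} \geq \kappa_d$ and volume $\Vol E = V_d r^d = v_0$ realise equality in the isoperimetric inequality. Apply \refthm{ShapeOfComponents} with $E_\delta = B(0,r+\delta)$. On the event $F_{\rho(t)}(t,E,E_\delta)$, the sandwiched component $C = x+\phi_d(t)U$ satisfies $E \subset U \subset E_\delta$, so $\Vol U \in [v_0, V_d(r+\delta)^d]$ and $\Capa U \leq \Capa E_\delta = \kappa_d(r+\delta)^{d-2}$. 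Every other component $C' = x'+\phi_d(t)U'$ has $\Capa U' < \Capa U \leq \kappa_d(r+\delta)^{d-2}$, so applying the isoperimetric inequality to $U'$ yields $\Vol U' \leq V_d(r+\delta)^d$ as well. Hence $V(t,\rho(t))/\phi_d(t)^d \in [v_0,V_d(r+\delta)^d]$ on $F_{\rho(t)}(t,E,E_\delta)$, and combining this with the probability estimate in \refthm{ShapeOfComponents} and letting $\delta\downarrow 0$ gives the LDP lower bound at rate $-J_d(\kappa_d(v_0/V_d)^{(d-2)/d}) = I^{\rm volume}_d(v_0)$.

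\textbf{Small-volume estimate \eqref{SmallVolume} and main obstacle.} For $v<V_d$, pick $r$ slightly larger than $(v/V_d)^{1/d}$, so that $E=B(0,r)$ has $\Capa E<\kappa_d$ and $\Vol E = V_d r^d > v$. Any $x\in\T^d$ with $x+\phi_d(t)E\subset\T^d\setminus W_{\rho(t)}[0,t]$ lies in a component of volume at least $V_d r^d\phi_d(t)^d > v\phi_d(t)^d$, so the event $\{V(t,\rho(t))/\phi_d(t)^d<v\}$ forces no such $x$ to exist. \refthm{NoTranslates} then bounds this event by $\exp(-t^{J_d(\Capa E)+o(1)})$, and letting $r\downarrow(v/V_d)^{1/d}$ (so $\Capa E\downarrow\kappa_d(v/V_d)^{(d-2)/d}$) produces \eqref{SmallVolume}. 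The main subtlety lies in the LDP lower bound: \refthm{ShapeOfComponents} only guarantees that the sandwiched component has the largest \emph{capacity}, not the largest \emph{volume}. The isoperimetric inequality is exactly what rescues this step, turning the one-sided capacity comparison on competing components into the needed volume bound and thus pinning down the maximum volume up to terms that vanish with $\delta$.
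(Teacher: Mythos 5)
Your proof is correct in structure and takes essentially the same route as the paper's: transfer the capacity LDP of \refthm{CapacitiesInWc} to volume via the Poincar\'e--Faber--Szeg\"o inequality in \eqref{CapaVolEval}, use \refthm{ShapeOfComponents} (with $E$ a ball, for which equality holds) for the lower bound, and \refthm{NoTranslates} for the small-volume estimate \eqref{SmallVolume}. However, one technical ingredient is missing, which the paper uses explicitly. Your set-theoretic inclusion $\{V(t,\rho)/\phi_d(t)^d \geq v\} \subset \{\kappa^*(t,\rho)/\phi_d(t)^{d-2} \geq \kappa_d(v/V_d)^{(d-2)/d}\}$, and the step writing any component as $C=x+\phi_d(t)U$ with $U\subset\R^d$, both presuppose that the components of $\T^d\setminus W_{\rho(t)}[0,t]$ are isometric images of bounded subsets of $\R^d$. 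This need not hold if a component wraps around the torus. Since $\kappa^*$ is defined as a supremum over connected $E\subset\R^d$ with $x+E$ contained in the vacant set, while $V(t,\rho)$ is the Lebesgue measure of the largest component regardless of its global geometry, a wrapping component of volume $\geq v\phi_d(t)^d$ does not automatically produce a set $E\subset\R^d$ of the corresponding capacity, and the inclusion is unjustified as written.

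The paper closes this gap by invoking \refprop{NoWrapping}, which shows that the wrapping event has super-polynomial decay and is therefore negligible at the rate $\log t$; you should cite it both at the upper-bound step (where it validates the inclusion) and at the lower-bound step (where it ensures that ``every other component'' can be written as $x'+\phi_d(t)U'$ so that the isoperimetric inequality applies to its volume). With that addition, your argument is complete. Your lower bound is slightly more explicit than the paper's — the paper works with distribution functions $\P(V/\phi^d\geq v)$ and deduces the LDP from monotonicity and continuity of the rate function, whereas you pin the maximal volume to the interval $[v_0, V_d(r+\delta)^d]$ — but both are valid.
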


Our second corollary concerns $\lambda(t,\rho)=\lambda(\T^d\setminus W_\rho[0,t])$, the 
principal Dirichlet eigenvalue of $\T^d\setminus W_\rho[0,t]$, where by $\lambda(E)$ (for 
$E\subset\T^d$ or $E\subset\R^d$) we mean the principal eigenvalue of the operator 
$-\tfrac{1}{2}\Delta_E$ with Dirichlet boundary conditions on $\boundary E$. We write 
$\lambda_d=\lambda(B(0,1))$ for the principal Dirichlet eigenvalue of the $d$-dimensional 
unit ball.

\begin{corollary}
\lbcoro{EvalLDP}
Subject to \eqref{RadiusBound}, the family $\P(\phi_d(t)^2 \lambda(t,\rho(t)) \in\cdot)$, $t>1$, 
satisfies the LDP on $(0,\infty)$ with rate $\log t$ and rate function
\begin{equation}
I^{\rm Dirichlet}_d(\lambda)
= I_d\left( \kappa_d (\lambda_d/\lambda)^{(d-2)/2} \right) \! .
\end{equation}
Moreover, for $\lambda> \lambda_d$, 
\begin{equation}
\label{LargeEval}
\log\P\big(\phi_d(t)^2 \lambda(t,\rho(t)) \geq \lambda\big) 
\leq -t^{J_d(\kappa_d(\lambda_d/\lambda)^{(d-2)/2})+o(1)}, \qquad t\to\infty.
\end{equation}
\end{corollary}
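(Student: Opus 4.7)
The plan is to transfer the large deviation principle of \refthm{CapacitiesInWc} for the rescaled maximum capacity $K^*(t)=\kappa^*(t,\rho(t))/\phi_d(t)^{d-2}$ to the rescaled eigenvalue $\Lambda(t)=\phi_d(t)^2\lambda(t,\rho(t))$ via the decreasing bijection $f\colon\kappa\mapsto\lambda_d(\kappa_d/\kappa)^{2/(d-2)}$, whose inverse $f^{-1}(\lambda)=\kappa_d(\lambda_d/\lambda)^{(d-2)/2}$ is saturated on Euclidean balls and which by construction satisfies $I^{\rm Dirichlet}_d(\lambda)=I_d(f^{-1}(\lambda))$. The analytic workhorse is the chained isoperimetric inequality
\begin{equation*}
\Capa(C)\geq\kappa_d\bigl(\lambda_d/\lambda(C)\bigr)^{(d-2)/2},\qquad C\subset\R^d\text{ bounded open,}
\end{equation*}
obtained by concatenating the Faber--Krahn inequality $\lambda(C)\geq\lambda_d(V_d/\Vol(C))^{2/d}$ with the classical isocapacitary inequality $\Capa(C)\geq\kappa_d(\Vol(C)/V_d)^{(d-2)/d}$, both saturated on balls.

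In the regime $\lambda\leq\lambda_d$ (polynomial rate), I would establish matching upper and lower bounds on $\P(\Lambda(t)\leq\lambda)$. For the upper bound, $\Lambda(t)\leq\lambda$ forces some component $C$ of $\T^d\setminus W_{\rho(t)}[0,t]$ to satisfy $\lambda(C)\leq\lambda/\phi_d(t)^2$; the isoperimetric inequality above together with capacity scaling then yields $K^*(t)\geq f^{-1}(\lambda)$, and \refthm{CapacitiesInWc} gives $\P(K^*(t)\geq f^{-1}(\lambda))=t^{-I_d(f^{-1}(\lambda))+o(1)}$. For the lower bound I would apply \refthm{ShapeOfComponents} with $E=B(0,r)$, $r=(\lambda_d/\lambda)^{1/2}\geq 1$, so that $\Capa E=f^{-1}(\lambda)\geq\kappa_d$ and $E\in\cE_c$. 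On the event $F_{\rho(t)}(t,E,E_\delta)$ the distinguished component $C=x+\phi_d(t)U$ satisfies $B(0,r)\subset U\subset B(0,r+\delta)$, hence $\phi_d(t)^2\lambda(C)=\lambda(U)\in[\lambda_d/(r+\delta)^2,\lambda_d/r^2]$; any other component $C'=x'+\phi_d(t)U'$ has $\Capa U'<\Capa U\leq\kappa_d(r+\delta)^{d-2}$, and a second application of the isoperimetric inequality combined with capacity scaling forces $\phi_d(t)^2\lambda(C')\geq f(\Capa U')>\lambda_d/(r+\delta)^2$. Therefore $\Lambda(t)\in[\lambda r^2/(r+\delta)^2,\lambda]$ on this event, and \refthm{ShapeOfComponents} provides $\P(F_{\rho(t)}(t,E,E_\delta))=t^{-I_d(f^{-1}(\lambda))+o(1)}$. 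Sending $\delta\downarrow 0$ and invoking continuity of $I_d$ converts this into the LDP lower bound for arbitrary open subsets of $(0,\lambda_d]$; combined with the upper bound, this gives the LDP in this range.

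In the regime $\lambda>\lambda_d$, where $I^{\rm Dirichlet}_d\equiv+\infty$, the isoperimetric inequality runs the wrong way and I would use Faber--Krahn alone: $\Lambda(t)\geq\lambda$ forces every component $C$ to satisfy $\Vol(C)\leq V_d\phi_d(t)^d(\lambda_d/\lambda)^{d/2}$, hence $V(t,\rho(t))/\phi_d(t)^d\leq V_d(\lambda_d/\lambda)^{d/2}<V_d$, and the volume tail bound \refeq{SmallVolume} of \refcoro{VolumeLDP} yields \refeq{LargeEval} directly.

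The main obstacle is the LDP lower bound: using \refthm{ShapeOfComponents} to localize $\Lambda(t)$ near a prescribed $\lambda<\lambda_d$, rather than merely bounding it from above, requires simultaneous control of every component's eigenvalue via the isoperimetric inequality in order to upgrade the one-sided inclusion $F_{\rho(t)}(t,E,E_\delta)\subset\{\Lambda(t)\leq\lambda\}$ to the two-sided $F_{\rho(t)}(t,E,E_\delta)\subset\{\Lambda(t)\in[\lambda r^2/(r+\delta)^2,\lambda]\}$. The isocapacitary and Faber--Krahn inequalities themselves are classical.
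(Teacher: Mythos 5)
There is a genuine gap in your proof of the tail bound \eqref{LargeEval}. You write that ``$\Lambda(t)\geq\lambda$ forces every component $C$ to satisfy $\Vol(C)\leq V_d\phi_d(t)^d(\lambda_d/\lambda)^{d/2}$,'' attributing this to the Faber--Krahn inequality. But Faber--Krahn runs in the opposite direction: it asserts that among sets of given volume the ball minimises the eigenvalue, which rearranges to the \emph{lower} bound $\Vol(C)\geq V_d(\lambda_d/\lambda(C))^{d/2}$. A lower bound on $\lambda(C)$ therefore gives no upper bound on $\Vol(C)$ at all --- a long thin rectangle, for instance, has large eigenvalue and arbitrarily large volume. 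So the step feeding $V(t,\rho(t))/\phi_d(t)^d < V_d(\lambda_d/\lambda)^{d/2}$ into \refeq{SmallVolume} is not available, and your route to \eqref{LargeEval} collapses. The paper instead argues directly with balls: if $\phi_d(t)^2\lambda(t,\rho(t))\geq\lambda>\lambda_d$, then by \eqref{CapacityScaling} and the scaling $\lambda(\phi D)=\phi^{-2}\lambda(D)$ no ball of capacity exceeding $\kappa_d\phi_d(t)^{d-2}(\lambda_d/\lambda)^{(d-2)/2}$ can lie in $\T^d\setminus W_{\rho(t)}[0,t]$, and since this capacity is strictly below $\kappa_d\phi_d(t)^{d-2}$ one may apply \refthm{NoTranslates} to $E$ a ball of capacity $\kappa_d\bigl((\lambda_d/\lambda)^{(d-2)/2}+\delta\bigr)$ and send $\delta\decreasesto 0$.

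The rest of your argument is correct. For the upper bound at $\lambda\leq\lambda_d$ you should invoke \refprop{NoWrapping} explicitly before identifying a component $C$ with a bounded open subset of $\R^d$ and applying \eqref{CapaVolEval}; the paper does so. Your lower bound is also valid, but it does more work than needed: the two-sided localization $\Lambda(t)\in[\lambda r^2/(r+\delta)^2,\lambda]$, which you obtain by controlling the eigenvalues of \emph{all} other components through the capacity ordering built into $F_{\rho}(t,E,E_\delta)$, is not required. Since $\lambda\mapsto I_d^{\rm Dirichlet}(\lambda)$ is continuous and strictly decreasing on $\ocinterval{0,\lambda_d}$, the one-sided estimate $\P(\Lambda(t)\leq\lambda)=t^{-I_d^{\rm Dirichlet}(\lambda)+o(1)}$ already yields the LDP by the usual monotone-rate-function argument, which is the reduction the paper uses. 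Your approach buys explicit interval localization of $\Lambda(t)$, but at the cost of the extra bookkeeping about the other components; either route settles the polynomial regime once \refprop{NoWrapping} is in hand.
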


Our last two corollaries concern the largest inradius of $\T^d\setminus W[0,t]$, 
\begin{equation}
\rho_{\rm in}(t) = \sup_{x\in\T^d} d(x,W[0,t]) 
= \sup\set{\rho\geq 0\colon\, \T^d\setminus W_\rho[0,t] \neq \emptyset},
\qquad t>0,
\end{equation}  
and the $\epsilon$-cover time,
\begin{equation}
\cC_\epsilon =\sup_{x\in\T^d} \inf\set{\big. t\geq 0\colon\, d(x,W[0,t])\leq\epsilon} 
= \inf\set{\big. t\geq 0\colon\,\rho_{\rm in}(t)\leq\epsilon},
\qquad 0<\epsilon<1.
\end{equation}
For the latter we need the scaling function
\begin{equation}
\label{psidepsilonDefinition}
\psi_d(\epsilon) 
= \frac{\epsilon^{-(d-2)} \log (1/\epsilon)}{\kappa_d}, \qquad 0<\epsilon<1.
\end{equation}

\begin{corollary}
\lbcoro{InradiusLDP}
The family $\P(\rho_{\rm in}(t)/\phi_d(t)\in\cdot\,)$, $t>1$, satisfies the LDP on 
$(0,\infty)$ with rate $\log t$ and rate function 
\begin{equation}
I^{\rm inradius}_d(r) = I_d(\kappa_d\,r^{d-2}).
\end{equation} 
Moreover, for $0<r<1$,
\begin{equation}
\label{SmallInradius}
\log \P\big(\rho_{\rm in}(t)/\phi_d(t) < r\big) 
\leq - t^{J_d(\kappa_d\,r^{d-2})+o(1)}, \qquad t\to\infty.
\end{equation}
\end{corollary}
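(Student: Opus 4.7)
The strategy is to recognize Corollary \ref{c:InradiusLDP} as the specialization of the results in \refsubsect{CompStruct} to the case where the test set is a closed ball. The pivot is the event identity
\[
\set{\rho_{\rm in}(t) > r\phi_d(t)} = \set{\exists\, x \in \T^d\colon\, x + \phi_d(t) B(0,r) \subset \T^d \setminus W[0,t]}, \qquad r > 0,
\]
which follows from the definition of $\rho_{\rm in}(t)$ together with the fact that a closed ball is disjoint from the compact set $W[0,t]$ precisely when its centre lies at distance strictly greater than its radius from $W[0,t]$. Since $B(0,r)$ satisfies a cone condition at every boundary point, $B(0,r) \in \cE^*$, and the scaling relation \eqref{e:CapacityScaling} gives $\Capa B(0,r) = r^{d-2} \kappa_d$.

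I would first prove the tail bound \eqref{e:SmallInradius}. For $0<r<1$ the capacity $r^{d-2}\kappa_d$ is strictly less than $\kappa_d$, so Theorem \ref{t:NoTranslates} applied with $\rho(t) \equiv 0$ (trivially $o(\phi_d(t))$) and $E = B(0,r)$ yields
\[
\log \P\bigl(\nexists\, x\colon\, x + \phi_d(t) B(0,r) \subset \T^d \setminus W[0,t]\bigr) \leq -t^{J_d(\kappa_d r^{d-2})+o(1)}.
\]
The left-hand side equals $\log\P(\rho_{\rm in}(t)/\phi_d(t) \leq r)$ by taking the complement of the identity above, which in turn dominates $\log\P(\rho_{\rm in}(t)/\phi_d(t) < r)$ and so establishes \eqref{e:SmallInradius}. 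Next, combining the identity with \eqref{e:HittingLargeSetProb} of Corollary \ref{c:UnhitSet}, applied at $E = B(0,r)$, yields the sharp half-line asymptotic
\[
\lim_{t\to\infty} \frac{\log \P\bigl(\rho_{\rm in}(t)/\phi_d(t) > r\bigr)}{\log t} = J_d(\kappa_d r^{d-2}) \vee 0, \qquad r>0.
\]

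The LDP with rate function $I^{\rm inradius}_d(r) = I_d(\kappa_d r^{d-2})$ then follows by standard bookkeeping. For the lower bound on an open $G \subset (0,\infty)$, I would pick $r_1 \in G \cap [1,\infty)$ and $\epsilon>0$ small with $(r_1-\epsilon, r_1+\epsilon) \subset G$; since $J_d$ is strictly decreasing, the difference $\P(\rho_{\rm in}/\phi_d(t) > r_1-\epsilon) - \P(\rho_{\rm in}/\phi_d(t) \geq r_1+\epsilon)$ inherits the polynomial order of its first summand, and sending $\epsilon\downarrow 0$ and taking the supremum over $r_1 \in G$ yields the bound. For the upper bound on a closed $F \subset (0,\infty)$, I would split at $r=1$: on $F \cap [1,\infty)$ the sharp half-line asymptotic delivers the correct exponent $-I_d(\kappa_d(\inf(F\cap[1,\infty)))^{d-2})$, while the contribution from $F\cap(0,1)$ is super-polynomially small by \eqref{e:SmallInradius} and is therefore subsumed. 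No substantive obstacle arises --- all the probabilistic content is already in Corollary \ref{c:UnhitSet} and Theorem \ref{t:NoTranslates} --- and the only mild delicacy is the bookkeeping at the threshold $r=1$, where the rate function jumps between finite and infinite values.
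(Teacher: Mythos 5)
Your proposal is correct and follows essentially the same route as the paper's own proof: the identity $\{\rho_{\rm in}(t)>r\phi_d(t)\}=\{\exists x\colon x+\phi_d(t)B(0,r)\subset\T^d\setminus W[0,t]\}$, an application of \refthm{NoTranslates} with $\rho(t)\equiv 0$ for \eqref{e:SmallInradius}, and \eqref{e:HittingLargeSetProb} of \refcoro{UnhitSet} with $E=B(0,r)$ for the half-line asymptotic on $r\geq 1$. The only difference is that you spell out the LDP bookkeeping explicitly, whereas the paper compresses it into the remark that $I_d^{\rm inradius}$ is continuous and strictly increasing on $[1,\infty)$ and infinite elsewhere.
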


\begin{corollary}
\lbcoro{CoverTimeLDP}
The family $\P(\cC_\epsilon/\psi_d(\epsilon)\in\cdot\,)$, $0<\epsilon<1$, satisfies the 
LDP on $(0,\infty)$ with rate $\log(1/\epsilon)$ and rate function 
\begin{equation}
I_d^{\rm cover}(u) 
=
\begin{cases}
u-d, &u\geq d,\\
\infty, &0<u< d.
\end{cases}
\end{equation}
Moreover, for $0<u<d$,
\begin{equation}
\label{SmallCoverTime}
\log \P\big(\cC_\epsilon/\psi_d(\epsilon)<u\big) 
\leq -\epsilon^{-(d-u)+o(1)}, \qquad \epsilon \downarrow 0.
\end{equation}
\end{corollary}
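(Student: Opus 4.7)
The plan is to derive \refcoro{CoverTimeLDP} from \refcoro{InradiusLDP} through the pathwise duality
$$
\set{\cC_\epsilon\leq t}=\set{\rho_{\rm in}(t)\leq\epsilon},
$$
which is immediate from the definitions of $\cC_\epsilon$ and $\rho_{\rm in}$ together with the continuity and monotonicity of $t\mapsto\rho_{\rm in}(t)$. Granted this identity, everything reduces to a change of variables linking the scaling functions $\psi_d$ and $\phi_d$.

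\emph{Scaling.} For fixed $u>0$, set $t=t(u,\epsilon):=u\psi_d(\epsilon)=u\epsilon^{-(d-2)}\log(1/\epsilon)/\kappa_d$. Then $\log t=(d-2)\log(1/\epsilon)+O(\log\log(1/\epsilon))$, and \eqref{phidtDefinition} gives
$$
\phi_d(t)^{d-2}=\frac{d\log t}{(d-2)\kappa_d t}=\frac{d}{u}\,\epsilon^{d-2}(1+o(1)),
$$
whence $\epsilon/\phi_d(t)\to r(u):=(u/d)^{1/(d-2)}$ uniformly on compact subsets of $u\in(0,\infty)$, with $r(u)\gtrless 1\iff u\gtrless d$. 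Since $I_d^{\rm inradius}(r(u))=\tfrac{d}{d-2}(u/d-1)=(u-d)/(d-2)$ for $u>d$ and $\log t\sim(d-2)\log(1/\epsilon)$, a rate of $(u-d)/(d-2)$ on the $\log t$ scale becomes a rate of $u-d$ on the $\log(1/\epsilon)$ scale, matching $I_d^{\rm cover}$.

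\emph{Transfer.} For $u>d$, the duality gives $\P(\cC_\epsilon/\psi_d(\epsilon)>u)=\P(\rho_{\rm in}(t)/\phi_d(t)>\epsilon/\phi_d(t))$. Sandwiching $\epsilon/\phi_d(t)$ between $r(u)\pm\delta$ and applying the LDP upper and lower bounds from \refcoro{InradiusLDP}, then letting $\delta\decreasesto 0$ using continuity of $I_d^{\rm inradius}$ on $[1,\infty)$, gives $\lim(\log\P)/\log(1/\epsilon)=-(u-d)$. For $u<d$, the duality gives $\P(\cC_\epsilon/\psi_d(\epsilon)<u)=\P(\rho_{\rm in}(t)/\phi_d(t)<\epsilon/\phi_d(t))$ with limit $r(u)<1$; for any $r'\in(r(u),1)$ this is eventually bounded by $\P(\rho_{\rm in}/\phi_d<r')$, which by \eqref{SmallInradius} is at most $\exp(-t^{J_d(\kappa_d (r')^{d-2})+o(1)})$. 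Sending $r'\decreasesto r(u)$ produces exponent $(d-u)/(d-2)$, and substituting $t=u\psi_d(\epsilon)$ turns $t^{(d-u)/(d-2)}$ into $\epsilon^{-(d-u)+o(1)}$, yielding both $I_d^{\rm cover}(u)=\infty$ on $(0,d)$ and the bound \eqref{SmallCoverTime}. Because $I_d^{\rm cover}$ is monotone on $[d,\infty)$ and $\infty$ on $(0,d)$, every infimum over an open or closed set in $(0,\infty)$ is attained at an endpoint, so these two tail estimates deliver the full LDP.

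\emph{Main difficulty.} The one delicate point is the transition $u=d$ (i.e.\ $r=1$), where $I_d^{\rm inradius}$ jumps from $0$ to $\infty$: the polynomial-tail regime $u>d$ and the stretched-exponential regime $u<d$ must be handled with different inputs from \refcoro{InradiusLDP}, and one must verify that the $o(1)$ error in $\epsilon/\phi_d(t)\to r(u)$ cannot straddle the discontinuity. This is automatic because $r(u)$ is bounded away from $1$ whenever $u\neq d$, so for each such $u$ one can choose $\delta$ (resp.\ $r'$) small enough that the relevant sandwich lies entirely on one side of $1$.
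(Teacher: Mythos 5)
Your proof is correct and follows essentially the same route as the paper's: the duality $\set{\rho_{\rm in}(t)>\epsilon}=\set{\cC_\epsilon>t}$, the scaling asymptotics \eqref{phipsiAsymptotics} yielding $\epsilon/\phi_d(t)\to(u/d)^{1/(d-2)}$, and the transfer of the LDP (respectively the stretched-exponential bound \eqref{SmallInradius}) from \refcoro{InradiusLDP} via continuity and monotonicity of the rate functions. The additional remark about keeping the sandwich away from the discontinuity at $r=1$ is a sensible clarification of a point the paper leaves implicit.
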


\refcoro{CoverTimeLDP} is equivalent to \refcoro{InradiusLDP} because of the relation 
$\set{\rho_{\rm in}(t) > \epsilon} = \set{\cC_\epsilon > t}$ and the asymptotics
\begin{equation}
\label{phipsiAsymptotics}
\phi_d(u\psi_d(\epsilon)) \sim \left( \frac{u}{d} \right)^{1/(d-2)} \epsilon,
\quad \epsilon \decreasesto 0,\,u>0,
\qquad \psi_d(r\phi_d(t)) \sim \frac{t}{d r^{d-2}},
\quad t\to\infty,\,r>0.
\end{equation}


\subsection{Discussion}
\lbsubsect{Discussion} 


\subsubsection{Upward versus downward deviations and the role of 
\texorpdfstring{$J_d(\kappa)$}{J\_d(kappa)}}
\lbsubsubsect{UpDownDiscussion}

\refthm{CapacitiesInWc} says that the region with largest capacity not intersecting the Wiener sausage 
of radius $\rho(t)$ lives on scale $\phi_d(t)$, and that upward large deviations on this scale 
have a cost that decays {\em polynomially} in $t$.  \refthm{ComponentCounts} identifies how 
many components there are with small capacity. This number grows {\em polynomially} in $t$.  
\refthm{NoTranslates} says that this number is extremely unlikely to be zero: the cost is 
{\em stretched exponential} in $t$. Theorem~\ref{t:ShapeOfComponents} completes the picture 
obtained from Theorems~\ref{t:CapacitiesInWc}--\ref{t:NoTranslates} by showing that components 
can approximate any shape in $\cE_c$.

Theorems~\ref{t:CapacitiesInWc}--\ref{t:ShapeOfComponents} and are linked by the 
heuristic that components of the form $x+\phi_d(t)E$ appear according to a Poisson point 
process with total intensity $t^{J_d(\Capa E)+o(1)}$. When $\Capa E > \kappa_d$ we have 
$J_d(\Capa E)<0$, and the likelihood of even a single such component is $t^{-\abs{J_d(\Capa E)}
+o(1)}$, as in \refcoro{UnhitSet}. When $\Capa E<\kappa_d$ we have $J_d(\Capa E)>0$, and a 
Poisson random variable $X$ of mean $t^{J_d(\Capa E)+o(1)}$ satisfies $X=t^{J_d(\Capa E)+o(1)}$ 
with high probability and $\P(X=0)= \exp[ -t^{J_d(\Capa E)+o(1)}]$. Based on this heuristic, we 
conjecture that the inequalities in \eqref{SmallVolume}, \eqref{LargeEval}, \eqref{SmallInradius} 
and \eqref{SmallCoverTime} are all equalities asymptotically.


\subsubsection{Components and the role of \texorpdfstring{$\rho(t)$}{rho(t)}}
\lbsubsubsect{ComponentsDiscussion}

Theorems~\ref{t:CapacitiesInWc}--\ref{t:ShapeOfComponents} concern components of the form 
$x+\phi_d(t)E$.  We begin by remarking that, with high probability, all components have 
this form:

\begin{proposition}
\lbprop{NoWrapping}
Assume \eqref{RadiusBound}. Let ${\rm Wrap}(t,\rho)$ be the event that $\T^d\setminus 
W_\rho[0,t]$ has a component $C$ that, when considered as a Riemannian manifold with 
its intrinsic metric, is not the isometric image $x+E$ of some bounded subset $E$ of $\R^d$. 
Then
\begin{equation}
\lim_{t\to\infty}\frac{\log\P\left( {\rm Wrap}(t,\rho(t)) \right)}{\log t}
 = -\infty.
\end{equation}
\end{proposition}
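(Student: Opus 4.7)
The plan is to deduce \refprop{NoWrapping} from \refthm{CapacitiesInWc} by showing that whenever $\mathrm{Wrap}(t,\rho(t))$ occurs, the rescaled largest capacity $\kappa^*(t,\rho(t)/2)/\phi_d(t)^{d-2}$ diverges as $t\to\infty$, and then sending the threshold in \refthm{CapacitiesInWc} to infinity. The first task is to extract a wrapping loop and promote it to a thick tube lying in the complement. On $\mathrm{Wrap}(t,\rho(t))$, some component $C$ of $\T^d\setminus W_{\rho(t)}[0,t]$ is not isometric to any bounded $x+E\subset\R^d$, so the inclusion-induced map $\pi_1(C)\to\pi_1(\T^d)=\Z^d$ has nontrivial image; hence $C$ contains a loop $\gamma$ that is non-contractible in $\T^d$. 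Since the systole of the unit torus is $1$, $\gamma$ has length at least $1$. Every $y\in\gamma$ satisfies $d(y,W[0,t])>\rho(t)$, so with $\rho'=\rho(t)/2$ one has $B(y,\rho')\subset\T^d\setminus W_{\rho'}[0,t]$. Fix $x_0\in\gamma$ and let $\gamma_0$ be the component of $\gamma\cap B(x_0,1/4)$ containing $x_0$; since $\gamma$ has length $\geq 1>1/2$, $\gamma_0$ exits $\boundary B(x_0,1/4)$, so its diameter in $\R^d$ is at least $1/4$. For $t$ large, $1/4+\rho'<1/2$, so $B(x_0,1/4+\rho')$ injects isometrically into $\R^d$; the set $T=(\gamma_0)_{\rho'}$ lifts to a connected tube $\tilde T\subset\R^d$ of length $\geq 1/4$ and radius $\rho'$, and $x_0+\tilde T\subset\T^d\setminus W_{\rho'}[0,t]$.

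Next I would lower-bound the capacity of $\tilde T$. Using the arclength-normalized measure on (a lift of) $\gamma_0$, regularized at scale $\rho'$, as a test measure in the energy form~\eqref{CapacityDoubleInt}, standard potential-theoretic estimates (classical for a prolate spheroid) give
\[
\Capa(\tilde T)\gtrsim
\begin{cases}
1/\log(1/\rho'),&d=3,\\
(\rho')^{d-3},&d\geq 4.
\end{cases}
\]
Combined with $\rho(t)\gg\phi_d(t)/(\log t)^{1/d}$ from~\eqref{RadiusBound} and $\phi_d(t)^{d-2}\asymp\log t/t$, a direct computation gives $\Capa(\tilde T)/\phi_d(t)^{d-2}\to\infty$ as $t\to\infty$ in every dimension $d\geq 3$.

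Finally I would apply \refthm{CapacitiesInWc} to the rescaled radius $\rho'=\rho(t)/2$, which still satisfies~\eqref{RadiusBound}. For every $\kappa_0>\kappa_d$ the LDP upper bound yields
\[
\limsup_{t\to\infty}\frac{\log\P(\kappa^*(t,\rho'(t))/\phi_d(t)^{d-2}>\kappa_0)}{\log t}\leq -I_d(\kappa_0)=J_d(\kappa_0).
\]
By the construction, $\mathrm{Wrap}(t,\rho(t))\subseteq\set{\kappa^*(t,\rho'(t))/\phi_d(t)^{d-2}>\kappa_0}$ for all $t$ large enough, so the same bound holds with $\P(\mathrm{Wrap}(t,\rho(t)))$ in place of the right-hand probability. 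Letting $\kappa_0\to\infty$ sends $J_d(\kappa_0)\to-\infty$, yielding the conclusion. The main obstacle is the tube capacity lower bound above: one must estimate the energy integral carefully, tracking the precise $\rho'$-dependence needed to dominate $\phi_d(t)^{d-2}$. The lower constraint on $\rho(t)$ in~\eqref{RadiusBound} is used essentially here, while the upper constraint $\rho(t)=o(\phi_d(t))$ enters only to ensure the lift to $\R^d$ is isometric for $t$ large.
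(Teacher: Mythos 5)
Your route is genuinely different from the paper's. The paper reduces $\mathrm{Wrap}$ to the event that some component has \emph{extrinsic diameter} at least $\tfrac12$, extracts from such a component a connected set touching $\partial B(0,\tfrac12\phi^{-1})$, and then invokes \reflemma{SetsAndAnimals} and \refprop{HitLatticeAnimal} directly: the approximating lattice animal must contain $\geq n(t)/(2\sqrt d)$ cubes, hence $\Vol E(A)\to\infty$, hence by Poincar\'e--Faber--Szeg\H{o} $\Capa E(A)\to\infty$, and \refprop{HitLatticeAnimal} applies with arbitrarily large $\kappa$. You instead extract a thick tube, estimate its capacity by hand, and feed the result into the upper bound of \refthm{CapacitiesInWc} at radius $\rho/2$ and let $\kappa_0\to\infty$. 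The quantitative conclusion is the same, and using $\rho'=\rho/2$ (which still satisfies \eqref{RadiusBound}) to convert ``$\gamma\subset\T^d\setminus W_\rho$'' into ``$(\gamma)_{\rho'}\subset\T^d\setminus W_{\rho'}$'' is a nice device. One could, incidentally, avoid the explicit cylinder-capacity asymptotics and simply use the volume of the tube together with the isoperimetric inequality \eqref{CapaVolEval}, exactly as the paper does.

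There is, however, a gap at the very first step: from ``$C$ is not the isometric image $x+E$ of a bounded $E\subset\R^d$'' you conclude that the map $\pi_1(C)\to\pi_1(\T^d)$ has nontrivial image. That implication does not follow. If the image is trivial then $C$ lifts, via $\pi_0$, to a connected open $\tilde C\subset\R^d$ on which $\pi_0$ is a bijective local isometry; but nothing forces $\tilde C$ to be \emph{bounded}. A connected open $\tilde C$ of finite volume in $\R^d$ that injects into $\T^d$ under $\pi_0$ can in principle be an unbounded thin ``tendril'', in which case $C$ has trivial $\pi_1$-image, yet is not the isometric image of any bounded $x+E$, so $\mathrm{Wrap}$ occurs with no non-contractible loop available to you. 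Thus the event you bound, $\{\exists\,\text{component with a non-contractible loop}\}$, is a priori a strict subset of $\mathrm{Wrap}(t,\rho(t))$ and your estimate does not cover all of $\mathrm{Wrap}$. The fix is exactly the paper's: note that if every component of $\T^d\setminus W_\rho[0,t]$ has diameter $<\tfrac12$ in $\T^d$, then each is contained in some $B(x,\tfrac12)$, which lifts globally isometrically, so $\mathrm{Wrap}$ fails; hence on $\mathrm{Wrap}$ some component has diameter $\geq\tfrac12$, and you can extract from it a connected subset (not necessarily a loop) reaching $\partial B(x_0,\tfrac14)$ and thicken that instead. With this replacement, and with a justification of the cylinder-capacity bound (or the isoperimetric shortcut), your proof would go through.
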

\noindent
Informally, such a component must ``wrap around'' the torus, so that the local isometry 
from $\R^d$ to $\T^d$ is not a global isometry. \refprop{NoWrapping} means that, apart from 
a negligible event, we may sensibly consider the components as subsets of $\R^d$ and discuss 
their capacities as defined in \eqref{CapacityDoubleInt}.

Collectively, Theorems~\ref{t:CapacitiesInWc}--\ref{t:ShapeOfComponents}, 
Corollaries~\ref{c:VolumeLDP}--\ref{c:EvalLDP} and \refprop{NoWrapping} show that $\T^d\setminus 
W_{\rho(t)}[0,t]$ has a \emph{component structure}, with well-defined bounds on the capacities, 
volumes and principal Dirichlet eigenvalues of these components. By contrast, the choice $\rho(t)=0$ 
does not give a component structure at all:
\begin{proposition}
\lbprop{Connected}
With probability $1$, the set $\T^d\setminus W[0,t]$ is path-connected, open and dense for 
every $t$, and the set $\T^d\setminus W\cointerval{0,\infty}$ is path-connected, locally 
path-connected and dense.
\end{proposition}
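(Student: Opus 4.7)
The plan is to separate the basic topological properties (openness, density) from the deeper ones of path-connectedness and local path-connectedness, handling the finite-time and infinite-time cases in a unified way. For openness: for each $t<\infty$, $W[0,t]$ is the continuous image of the compact interval $[0,t]$, hence compact and in particular closed, so $\T^d\setminus W[0,t]$ is automatically open. For density: it suffices to show $\mathrm{Leb}(W\cointerval{0,\infty})=0$ a.s., which follows from Fubini together with the fact that single points are polar for Brownian motion in $d\geq 2$. The ``for every $t$'' quantifier is then obtained from monotonicity: on the probability-one event where $\mathrm{Leb}(W[0,n])=0$ for every $n\in\N$, the same holds for every $t$, since $W[0,t]\subseteq W[0,\lceil t \rceil]$.

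The central input for connectedness is the Hausdorff-measure bound
\[
\mathcal{H}^{d-1}(W[0,t])=0 \quad \text{a.s.\ for every }t>0,
\]
which, via $W\cointerval{0,\infty}=\bigcup_{n\in\N} W[0,n]$ and countable subadditivity, also yields $\mathcal{H}^{d-1}(W\cointerval{0,\infty})=0$ a.s. For $d\geq 4$ this is immediate from $\dim_H W[0,t]=2<d-1$ a.s. For $d=3$ one is at the critical dimension, and I would invoke Taylor's theorem, which says that $W[0,t]$ has finite $\phi$-Hausdorff measure under the gauge $\phi(r)=r^2\log\log(1/r)$; since $r^2/\phi(r)\to 0$ as $r\to 0$, this forces $\mathcal{H}^2(W[0,t])=0$ a.s. Once this is in hand, the classical geometric measure theory fact that a Borel set of $\mathcal{H}^{d-1}$-measure zero cannot disconnect a connected open subset of $\R^d$ gives that $\T^d\setminus W[0,t]$ and $\T^d\setminus W\cointerval{0,\infty}$ are both connected a.s.

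For the finite-time case, openness plus connectedness plus local path-connectedness of the ambient manifold $\T^d$ immediately upgrades connectedness to path-connectedness. For $\T^d\setminus W\cointerval{0,\infty}$, which is only a dense $G_\delta$ and not open, I would argue directly via generic polygonal paths. The orthogonal projection $\pi\colon\R^d\to\R^{d-1}$ is $1$-Lipschitz, so $\mathcal{H}^{d-1}(\pi(F))\leq\mathcal{H}^{d-1}(F)=0$ for $F=W\cointerval{0,\infty}$, which means $\pi(F)$ has zero Lebesgue measure in $\R^{d-1}$. Hence, for any fixed direction, a.e.\ line parallel to that direction is entirely disjoint from $F$. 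Given $x,y\in\T^d\setminus F$ and a small tubular neighborhood $U$ of a short geodesic from $x$ to $y$, I would build a three-segment polygonal path $x\to x+u\to y+u\to y$ contained in $U$, choosing the perpendicular shift $u$ in a generic subset of positive measure so that each of the three segments avoids $F$ (a Fubini / Besicovitch-type argument controls each segment separately). The same construction, carried out inside arbitrarily small balls around a given point, yields local path-connectedness at that point.

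The main obstacle is the critical case $d=3$: the bound $\dim_H W[0,t]=2$ is not by itself enough to conclude $\mathcal{H}^{d-1}(W[0,t])=0$, and one has to appeal to Taylor's sharper Hausdorff-gauge result to bring the $2$-dimensional Hausdorff measure down to zero. The remaining steps are standard geometric measure theory and topology; beyond the Hausdorff-dimension bound and the polarity of points, no probabilistic input is needed.
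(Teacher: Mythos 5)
Your proposal is correct in substance but follows a genuinely different route from the paper, which is worth contrasting. The paper sidesteps Hausdorff measure entirely: it fixes once and for all the countable ``grid'' $Z\subset\T^d$ consisting of all $(d-2)$-dimensional sub-tori on which two coordinates are rational, observes that $Z$ is already dense, path-connected and locally path-connected, and notes that $Z$ is a countable union of $(d-2)$-dimensional planes and hence has zero capacity, so it is polar. Therefore $W\cointerval{0,\infty}$ almost surely meets $Z$ in at most the starting point, and the complement of the path contains (most of) this one fixed subset $Z$, from which the topological properties are read off directly. Your argument instead establishes the abstract input $\mathcal{H}^{d-1}\bigl(W\cointerval{0,\infty}\bigr)=0$ and then invokes the geometric-measure-theory fact that $\mathcal{H}^{d-1}$-null sets do not separate, together with a Fubini-type polygonal-path construction for the non-open set $\T^d\setminus W\cointerval{0,\infty}$. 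Both routes work, but note the cost of yours: for $d=3$ the bound $\dim_H W[0,t]=2$ is not enough and you must invoke the exact Hausdorff gauge theorem of Ciesielski and Taylor, a substantially deeper fact than what the paper uses, which is merely that a $(d-2)$-plane has zero Newtonian capacity --- immediate from the divergence of the energy integral $\int r^{-(d-2)}\,r^{d-3}\,dr$. In that sense the paper's proof is much more lightweight; on the other hand your proof makes the path-connectedness of the non-open infinite-time complement explicit via a concrete path construction, which the paper handles only implicitly.

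One technical slip in the polygonal-path step: you should not restrict the shift $u$ to the $(d-1)$-dimensional subspace perpendicular to $y-x$. From $\mathcal{H}^{d-1}(F)=0$ you can conclude that the set of bad directions $v\in S^{d-1}$ (those for which the ray from $x$ in direction $v$ meets $F$) has $\mathcal{H}^{d-1}$-measure zero on $S^{d-1}$, by a locally Lipschitz radial projection from $x$. But this does not force the bad set to have $\mathcal{H}^{d-2}$-measure zero on the particular great $(d-2)$-sphere of directions perpendicular to $y-x$; it could a priori cover that great sphere entirely (take $F$ inside the cone over that sphere to be a graph over directions). The fix is to let $u$ range over a full $d$-dimensional ball $B(0,\epsilon)$: then each of the three segment conditions on $u$ is a Lebesgue-full-measure condition (by radial projection from $x$, radial projection from $y$, and orthogonal projection onto $(y-x)^\perp$ respectively), so a suitable small $u$ exists. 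Alternatively the two-segment path $x\to z\to y$, with $z$ generic in a small ball near the midpoint, avoids this issue altogether and is simpler.
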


The picture behind Propositions~\ref{p:NoWrapping}--\ref{p:Connected} is that the set $\T^d 
\setminus W[0,t]$ consists of ``lakes'' whose linear size is of order $\phi_d(t)$, connected 
by narrow ``channels'' whose linear size is at most $\phi_d(t)/(\log t)^{1/d}$. By inflating 
the Brownian motion to a Wiener sausage of radius $\rho(t)$ with (recall \eqref{twoscales} and \eqref{RadiusBound}) 
\begin{equation}
\label{regime}
\phi_\mathrm{local}(t) \ll \phi_d(t)/(\log t)^{1/d} \ll \rho(t) \ll \phi_d(t) 
\asymp \phi_\mathrm{global}(t), 
\end{equation}
we effectively block off these channels, so that $\T^d\setminus W_{\rho(t)}[0,t]$ consists of 
disjoint lakes.

\refprop{Connected} shows that some lower bound on $\rho(t)$ is necessary for the results 
of Theorems~\ref{t:CapacitiesInWc}--\ref{t:ShapeOfComponents}, 
Corollaries~\ref{c:VolumeLDP}--\ref{c:EvalLDP} and \refprop{NoWrapping} to 
hold.\footnote{The choice $\rho(t)=0$ makes the eigenvalue result in \refcoro{EvalLDP} false 
for $d\geq 4$, since the path of the Brownian motion itself is a polar set for $d\geq 4$. However, 
for $d = 3$ the eigenvalue $\lambda(t,\rho(t))$ is non-trivial even when $\rho(t)=0$, and we conjecture 
that \refcoro{EvalLDP} remains valid, i.e., the eigenvalue is determined primarily by the large lakes in $\T^d\setminus W[0,t]$, and not by the narrow channels connecting them. See 
the rough estimates in van den Berg, Bolthausen and den Hollander~\cite{vdBBdHpr}.} 
It would be of interest to know whether the condition $\rho(t)\gg \phi_d(t)/(\log t)^{1/d}$ 
can be relaxed, i.e., whether the true size of the channels is of smaller order than 
$\phi_d(t)/(\log t)^{1/d}$. By analogy with the random interlacements model (see 
\refsubsubsect{RandomInterlacements} below), the relevant regime to study would be 
$\phi_\mathrm{local}(t) \asymp \phi_d(t)/(\log t)^{1/(d-2)} \ll \rho(t) \ll \phi_d(t)/(\log t)^{1/d}$, 
i.e., the missing part of \eqref{regime}.


\subsubsection{A comparison with random interlacements}
\lbsubsubsect{RandomInterlacements}

The discrete analogue of $\T^d\setminus W[0,t]$ is the complement $\T_N^d\setminus S[0,n]$ 
of the path of a random walk $S=(S(n))_{n\in\N_0}$ on a large discrete torus $\T_N^d=(\Z/N\Z)^d$.  
The spatial scale being fixed by discretization, it is necessary to take $N\to\infty$ and 
$n\to\infty$ simultaneously, and the choice $n=uN^d$ for $u\in(0,\infty)$ has been extensively 
studied: see for instance Benjamini and Sznitman~\cite{BenjSznit2008}, Sznitman~\cite{S2010} 
and Sidoravicius and Sznitman~\cite{SS2009}. Teixeira and Windisch~\cite{TeixWind2011} prove 
that $S[0,uN^d]$, seen \emph{locally} from a \emph{typical} point, converges in law as $N\to\infty$,
namely,
\begin{equation}
\label{DiscreteTorusLocal}
\lim_{N\to\infty} \P\left( (X_N+E) \cap  S[0,uN^d] = \emptyset \right) 
= e^{-u\Capa_{\Z^d} E}, \qquad E\subset\Z^d\text{ finite},
\end{equation}
where $X_N$ is drawn uniformly from $\T_N^d$, and $\Capa_{\Z^d} E$ is the discrete capacity. The 
right-hand side of \eqref{DiscreteTorusLocal} is the non-intersection probability 
\begin{equation}
\P(E \cap \cI^u = \emptyset) = e^{-u\Capa_{\Z^d} E}
\end{equation}
for the \emph{random interlacements} model with parameter $u$ introduced by Sznitman~\cite{S2010}. 
The set $\cI^u\subset\Z^d$ can be constructed as the union of a certain Poisson point process of random walk 
paths, with an intensity measure proportional to the parameter $u$. The random interlacements model 
has a critical value $u_*\in(0,\infty)$ such that $\Z^d\setminus\cI^u$ has an unbounded component 
a.s.\ when $u<u_*$ and has only bounded components a.s.\ when $u>u_*$.

The continuous analogue of \eqref{DiscreteTorusLocal} is the probability of the event in 
\eqref{BasicEvent} with the scaling factor $\phi=\phi_\mathrm{local}(t) = t^{-1/(d-2)}$ instead of 
$\phi=\phi_d(t) \asymp \phi_\mathrm{global}(t)$. Our methods (see Propositions~\ref{p:ExcursionNumbers} 
and \ref{p:NSuccessfulProb} below) yield
\begin{equation}
\label{CtsTorusLocal}
\lim_{t\to\infty} \left( (X+t^{-1/(d-2)}E) \cap W[0,t] = \emptyset \right) 
= e^{-\Capa E},\qquad E\subset\R^d\text{ compact},
\end{equation}
for $X$ drawn uniformly from $\T^d$, which implies that the random set $\T^d\setminus W[0,t]$, 
seen locally from a typical point, converges in law (see Molchanov~\cite[Theorem 6.5]{M2005} 
for a discussion of convergence in law for random sets) to a random closed set $\cI$ uniquely 
characterized by its non-intersection probabilities
\begin{equation}
\label{CtsInterlacements}
\P(E \cap \cI = \emptyset) = e^{-\Capa E}, \qquad E\subset\R^d\text{ compact}.
\end{equation}
As with the discrete random interlacements $\cI^u$, the limiting random set $\cI$ can be constructed 
from a Poisson point process of Brownian motion paths (see Sznitman~\cite[Section 2]{SznitmanBrInt}).

Because of scale invariance, no parameter is needed in \eqref{CtsTorusLocal}--\eqref{CtsInterlacements}. 
Indeed, the continuous model corresponds to a rescaled limit of the discrete model when $(N,u)$ 
is replaced by $(kN,u/k^{d-2})$ and $k\to\infty$. In this rescaling the parameter $u$ tends to zero,
and $\Z^d\setminus\cI^u$ loses its finite component structure, which is in accordance with the 
connectedness result \refprop{Connected}.

Inflating the Brownian motion to a Wiener sausage can be interpreted as reintroducing a kind of 
discretization. However, because of \eqref{RadiusBound}, the spatial scale $\rho(t)$ of this 
discretization is much larger than the spatial scale $\phi_\mathrm{local}(t) = t^{-1/(d-2)}$ corresponding 
to \eqref{CtsTorusLocal} (cf.\ \refsubsubsect{ComponentsDiscussion}).

In the random interlacements model no sharp bound is currently known for the tail behaviour of the 
capacity of the component containing the origin. Recently, Popov and Teixeira~\cite{PopovTeixeiraSoftLocal} 
showed that for $d \geq 3$ the \emph{diameter} of the component containing $0$ in $\Z^d\setminus\cI^u$ 
has an exponential tail for $u$ sufficiently large (with a logarithmic correction in $d=3$).  In particular, the 
largest diameter of a component in a box of volume $N^d$, $d\geq 4$, can grow at most as $\log N$, 
and therefore the largest capacity of a component can grow at most as $(\log N)^{d-2}$.

When this last bound is translated heuristically to our context, the corresponding assertion is that the 
maximal capacity of a component is at most of order $(\log t)^{d-2}/t$.  By \refthm{CapacitiesInWc}, 
this bound is very far from sharp for $d\geq 4$.  It is tempting to conjecture that the \emph{capacity} 
of the component containing $0$ in $\Z^d\setminus\cI^u$ also has an exponential tail for $u$ sufficiently 
large. The reasonableness of this conjecture is related to whether or not the condition on $\rho(t)$
in \eqref{RadiusBound} can be weakened to $\rho(t) \geq u\phi_\mathrm{local}(t)$ for $u$ sufficiently large.
Possibly the scaling behaviour of $\T^d\setminus W_{\rho(t)}[0,t]$ with $\rho(t) = u\phi_\mathrm{local}(t)$ undergoes 
some sort of percolation transition at a critical value $\bar u_* \in (0,\infty)$.


\subsubsection{Corollaries of the capacity bounds}

\refcoro{UnhitSet} summarizes for which set $E$ a subset $x+\phi_d(t)E \subset \T^d\setminus W[0,t]$ 
can be expected to exist: according to Theorems~\ref{t:CapacitiesInWc}--\ref{t:ShapeOfComponents},
subsets of large capacity are unlikely to exist, whereas subsets of small capacity are numerous. 

Corollaries~\ref{c:VolumeLDP}--\ref{c:EvalLDP} follow from 
Theorems~\ref{t:CapacitiesInWc}--\ref{t:ShapeOfComponents} with the help of the isoperimetric 
inequalities
\begin{equation}
\label{CapaVolEval}
\frac{\Capa E}{\kappa_d} \geq \left( \frac{\Vol E}{V_d} \right)^{(d-2)/d} 
\geq \left( \frac{\lambda_d}{\lambda(E)} \right)^{(d-2)/2},
\qquad E\subset\R^d\text{ bounded open,}
\end{equation}
where  we recall that $\kappa_d,V_d,\lambda_d$ are the capacity, volume and principal Dirichlet 
eigenvalue of $B(0,1)$. The first inequality is the Poincar\'e-Faber-Szeg\"o theorem, which 
says that among all sets with a given volume the ball has the smallest capacity. The second 
inequality is the Faber-Krahn theorem, which says that among all sets of a given volume the 
ball has the smallest Dirichlet eigenvalue.\footnote{See e.g.\ 
Bandle~\cite[Theorems II.2.3 and III.3.8]{B1980} or P\'olya and 
Szeg\"o~\cite[Section I.1.12]{PS1951}. These references consider the capacity only when $d=3$, 
but their methods apply for all $d\geq 3$.} Comparing with \refthm{CapacitiesInWc}, we see
that the most efficient way to produce a component of a given large volume (or small principal 
Dirichlet eigenvalue) is for that component to be a ball.

Equality holds throughout \eqref{CapaVolEval} when $E$ is a ball, and the lower bounds in 
Corollaries~\ref{c:VolumeLDP}--\ref{c:EvalLDP}, together with 
Corollaries~\ref{c:InradiusLDP}--\ref{c:CoverTimeLDP}, follow by specializing 
Theorems~\ref{t:CapacitiesInWc}--\ref{t:ShapeOfComponents} to that case.

The large deviation principles in \refthm{CapacitiesInWc} and 
Corollaries~\ref{c:VolumeLDP}--\ref{c:CoverTimeLDP} each imply a weak law of large numbers, 
e.g.\ $\lim_{t\to\infty} \kappa^*(t,\rho(t))/\phi_d(t)^{d-2}=1$ in $\P$-probability. 
The weak laws of large numbers implied by Corollaries~\ref{c:InradiusLDP}--\ref{c:CoverTimeLDP} 
were proved in Dembo, Peres and Rosen~\cite{DPR2003} in the stronger form $\lim_{t\to\infty}
\rho_{\rm in}(t)/\phi_d(t)=1$ and $\lim_{t\to\infty} \cC_\epsilon/\psi_d(\epsilon)=d$
$\P$-a.s. The $L^1$-version of this convergence is proved in van den Berg, 
Bolthausen and den Hollander~\cite{vdBBdHpr}. Note that none of these forms are equivalent: 
for instance, a.s.\ convergence does not follow from 
Corollaries~\ref{c:InradiusLDP}--\ref{c:CoverTimeLDP}, since the sum $\sum_{t\in\N} 
\exp[-I_d(\kappa)\log t]$ fails to converge when $I_d(\kappa)$ is small.


\subsubsection{The maximal diameter of a component}
\lbsubsubsect{DiameterDiscussion}

There is no analogue of \refcoro{VolumeLDP} for the maximal diameter instead of the 
maximal volume. The capacity and the diameter are related by $\Capa E \leq \kappa_d 
(\diam E)^{d-2}$. However, there is no inequality in the reverse direction: a set of 
fixed capacity can have an arbitrarily large diameter. It turns out that the maximal 
diameter of the components of $\T^d \setminus W_{\rho(t)}[0,t]$ is of larger order 
than $\phi_d(t)$. More precisely, suppose that $\rho(t)=o(\phi_d(t))$, and let $D(t,\rho(t))$ 
denote the largest diameter of a component of $\T^d\setminus W_{\rho(t)}[0,t]$. Then 
$\lim_{t\to\infty} D(t,\rho(t))/\phi_d(t)=\infty$ in $\P$-probability. Indeed, choose 
a compact connected set $E$ of zero capacity and large diameter, say $E=[0,L]\times
\set{0}^{d-1}$ with $L$ large. Then, by \refthm{NoTranslates}, $\T^d \setminus W_{\rho(t)}[0,t]$ 
has a component containing $x+\phi_d(t) E$ for some $x$ with a high probability.  See also the discussion at the end of \refsubsubsect{RandomInterlacements} above.


\subsubsection{The second-largest component}

The component of second-largest capacity (or second-largest volume, principal Dirichlet 
eigenvalue, or inradius) has a different large deviation behaviour, due to the fact 
that $E\mapsto\Capa E$ is not additive. Indeed, typically $\Capa(E^{(1)}\union E^{(2)})
<\Capa(E^{(1)})+\Capa(E^{(2)})$, even for disjoint sets $E^{(1)}, E^{(2)}$. In the case 
of concentric spheres, $\Capa(\boundary B(0,r_1)\union\boundary B(0,r_2))=\max\set{\Capa
(\boundary B(0,r_1)), \Capa(\boundary B(0,r_2))}$. It follows that the most efficient way 
to produce two large but disjoint components is to have them almost touching.


\subsubsection{Answers to Questions \ref{q:Geometry}--\ref{q:Components}}

The results in this paper give a partial answer to \refquestion{Geometry}. \refquestion{ExistsTranslate} 
is answered by \refcoro{UnhitSet} subject to $E\in\cE^*$, $\Capa E\neq\kappa_d$ (see also 
\refsect{LatticeAnimals} for results that are simultaneous over a certain class of sets $E$). The resolution to \refquestion{LargerSubset}, namely, the fact that the dense picture in 
\reffig{SparseVsDense}(b) applies, is provided by \refcoro{UnhitSet}.  If $E\subset E'$ with 
$\Capa E'\geq\Capa E+\delta$, $\delta>0$, and $E,E'\in\cE^*$, then, compared to subsets of 
the form $x+\phi_d(t)E$, subsets of the form $x+\phi_d(t)E'$ are much less numerous (when 
$\Capa E<\kappa_d$) or much less probable (when $\Capa E\geq\kappa_d$). Moreover, if 
\eqref{RadiusBound} holds, then Theorems~\ref{t:CapacitiesInWc}--\ref{t:ComponentCounts} 
answer \refquestion{LargerSubset} \emph{simultaneously} over all possible sets $E'$.
The answer to \refquestion{Avoidance}, namely, that the Brownian motion follows a spatial 
avoidance strategy, will follow from \refprop{ExcursionNumbers} below. Finally, 
Theorems~\ref{t:CapacitiesInWc}--\ref{t:ShapeOfComponents}, 
Corollaries~\ref{c:VolumeLDP}--\ref{c:EvalLDP} and \refprop{NoWrapping} provide the answer 
to \refquestion{Components}.


\subsubsection{Two dimensions}
\lbsubsubsect{2d}

It remains a challenge to extend the results in the present paper to $d=2$ (see \reffig{d=2sim}). 
A law of large numbers for the $\epsilon$-cover time is derived in Dembo, Peres, Rosen and 
Zeitouni~\cite{DPRZ2004}:
\begin{equation}\label{CoverTimeLLN2d}
\lim_{\epsilon\decreasesto 0} \frac{\cC_\epsilon}{\psi_2(\epsilon)}=2 \quad \text{ a.s.},
\qquad 
\psi_2(\epsilon)=\frac{[\log(1/\epsilon)]^2}{\pi}.
\end{equation}
However, the relation $\psi_2(\epsilon(t))\sim\psi_2(\tilde{\epsilon}(t))$, where $\epsilon(t),\tilde{\epsilon}(t)\decreasesto 0$, no longer implies $\epsilon(t)\sim\tilde{\epsilon}(t)$: cf.\ \eqref{phipsiAsymptotics}. Hence the identity $\set{\rho_{\rm in}(t) > \epsilon} = \set{\cC_\epsilon > t}$ 
does not lead to a law of large numbers for the largest inradius $\rho_{\rm in}(t)$ itself, but only 
for its logarithm $\log \rho_{\rm in}(t)$:
\begin{equation}\label{Inradius2d}
\rho_{\rm in}(t) = e^{-\sqrt{\pi t/2} + o(\sqrt{t})}, \quad \frac{\log \rho_{\rm in}(t)}{\sqrt{t}} \to -\sqrt{\pi/2}, \qquad  t\to\infty. 
\end{equation}
In order to give a detailed geometric description, the error term $o(\sqrt{t})$ in \eqref{Inradius2d} would need to 
be controlled up to order $O(1)$.  Rough asymptotics for the logarithm of the average principal 
Dirichlet eigenvalue are conjectured in van den Berg, Bolthausen and den Hollander~\cite{vdBBdHpr}. 

In contrast to $d\geq3$, the large subsets of $\T^2 \setminus W[0,t]$ are expected to arise because 
of a \emph{temporal} avoidance strategy and to resemble the \emph{sparse} picture of 
\reffig{SparseVsDense}(a) (see Questions~\ref{q:LargerSubset}--\ref{q:Avoidance}). Furthermore, 
the Poisson point process heuristic, valid for $d \geq 3$ as explained in \refsubsubsect{UpDownDiscussion}, 
fails in $d=2$. The components of $\T^2 \setminus W[0,t]$ are expected to have a hierarchical 
structure, with long-range spatial correlations.


\section{Brownian excursions}
\lbsect{Preparations}

In this section we list a few properties of Brownian excursions that will be needed
as we go along. \refSubSect{Excursions} looks at the times and the numbers of excursions 
between the boundaries of two concentric balls, \refsubsect{BMCapa} estimates the hitting 
probabilities of these excursions in terms of capacity, while \refsubsect{ContCapa} 
collects a few elementary properties of capacity.


\subsection{Counting excursions between balls}
\lbsubsect{Excursions}

$\bullet$
Excursion times.
Let $x\in\T^d$ and $0<r<R<\tfrac{1}{2}$. Regard these values as fixed for the moment. 
Set $T_0=\inf\set{t\geq 0\colon\,W(t)\in\boundary B(x,R)}$ and, for $i\in\N$, define 
recursively the hitting times (see \reffig{ExcTim})
\begin{equation}
\label{hittimedef}
\begin{aligned}
T'_i &= \inf\set{t\geq T_{i-1}\colon\,W(t)\in\boundary B(x,r)},\\
T_i &= \inf\set{t\geq T'_i\colon\,W(t)\in\boundary B(x,R)}.
\end{aligned}
\end{equation}
We call $W[T'_i,T_i]$ the \emph{$i^\th$ excursion from $\boundary B(x,r)$ to $\boundary 
B(x,R)$}, and write $\xi'_i(x)=W(T'_i)$, $\xi_i(x)=W(T_i)$ for its starting and ending 
points.\footnote{If the starting point $x_0$ lies inside $B(x,R)$, then the 
Brownian motion may travel from $\boundary B(x,r)$ to $\boundary B(x,R)$ before time 
$T_0$. To simplify the application of Dembo, Peres and Rosen~\cite[Lemma 2.4]{DPR2003}, 
we do not call this an excursion from $\boundary B(x,r)$ to $\boundary B(x,R)$.} 

Set
\begin{equation}
\begin{aligned}
&\tau_0(x,r,R)=\tau'_0(x,r,R)=T_0(x),\\
&\tau_i(x,r,R)=T_i-T_{i-1},\,\tau'_i(x,r,R)=T'_i-T_{i-1}, \quad i\in\N.
\end{aligned}
\end{equation}
Thus, $\tau_i(x,r,R)$ is the duration of the $i^\th$ excursion from $\boundary B(x,R)$ 
to itself via $\boundary B(x,r)$, while $\tau'_i(x,r,R)<\tau_i(x,r,R)$ is the duration 
of the $i^\th$ excursion from $\boundary B(x,R)$ to $\boundary B(x,r)$.

\begin{figure}[htbp]
\begin{center}
\includegraphics{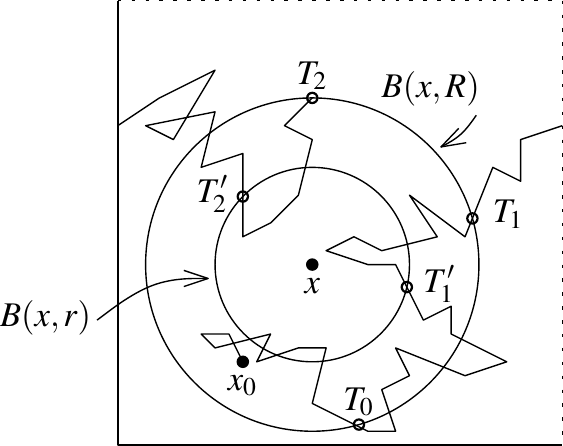}
\caption{Hittings that define the times $T_i$, $i\in\N_0$, and $T'_i$, $i\in\N$. The 
open circles indicate the locations of the starting and ending points $\xi'_i(x)=W(T'_i)$, 
$\xi_i(x)=W(T_i)$ of the excursions.}
\lbfig{ExcTim}
\end{center}
\end{figure}

\noindent
(All the variables $T_i,T'_i,\xi_i,\xi'_i,\tau_i,\tau'_i$ depend on all the parameters 
$x,r,R$. Nevertheless, in our notation we only indicate some of these dependencies.)

\medskip\noindent
$\bullet$
Excursion numbers.
Define
\begin{align}
N(x,t,r,R)
&= \max\set{i\in\N_0\colon\,T_i\leq t} 
= \max\set{j\in\N_0\colon\,\sum_{i=0}^j \tau_i(x,r,R) \leq t},\\
N'(x,t,r,R)
&= \max\set{j\in\N_0\colon\,\sum_{i=0}^j \tau'_i(x,r,R) \leq t}.
\end{align}
Thus, $N(x,t,r,R)$ is the number of completed excursions from $\boundary B(x,r)$ to 
$\boundary B(x,R)$ by time $t$, while $N'(x,t,r,R)$ is the number of (necessarily 
completed) excursions when the total time spent \emph{not} making an excursion 
reaches $t$.

As we will see in \refprop{ExcursionNumbers} below, $N(x,t,r,R)$ and $N'(x,t,r,R)$ have 
very similar scaling behaviour for $t\to\infty$ and $r\ll R\ll 1$. Indeed, the times 
$\tau_i(x,r,R)$ and $\tau'_i(x,r,R)$ are typically large (since the Brownian motion 
typically visits the bulk of $\T^d$ many times before travelling from $\boundary B(x,R)$ 
to $\boundary B(x,r)$), whereas $\tau_i(x,r,R)-\tau'_i(x,r,R)=T_i(x)-T'_i(x)$ scales as 
$R^2$. The advantage of $N'(x,t,r,R)$ is that it is independent of non-intersection events within 
$B(x,r)$ given the starting and ending points $\xi'_i(x)$,$ \xi_i(x)$ of the excursions.

Define
\begin{equation}
\label{NdDefinition}
N_d(t,r,R)= \frac{\kappa_d t}{r^{-(d-2)}-R^{-(d-2)}}.
\end{equation}
The following proposition shows that $N_d(t,r,R)$ represents the typical size for the 
random variables $N(x,t,r,R)$ and $N'(x,t,r,R)$.

\begin{proposition}
\lbprop{ExcursionNumbers}
For any $\delta \in (0,1)$ there is a $c=c(\delta)>0$ such that, uniformly in 
$x,x_0\in\T^d$, $t>1$ and $0<r^{1-\delta}\leq R\leq c$,
\begin{align}
\P_{x_0} \! \left( \big. N(x,t,r,R) \geq (1+\delta) N_d(t,r,R) \right) 
&\leq e^{-c N_d(t,r,R)},
\label{NUpwardBound}
\\
\P_{x_0} \! \left( \big. N'(x,t,r,R) \geq (1+\delta) N_d(t,r,R) \right) 
&\leq e^{-c N_d(t,r,R)},
\label{NprimeUpwardBound}
\\
\P_{x_0} \! \left( \big. N(x,t,r,R) \leq (1-\delta) N_d(t,r,R) \right) 
&\leq e^{-c N_d(t,r,R)}.
\label{NDownwardBound}
\end{align}
\end{proposition}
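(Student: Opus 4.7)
The plan is to rewrite each event as an inequality for a partial sum of excursion durations $\tau_i(x,r,R)$ (respectively $\tau'_i(x,r,R)$) and then apply an exponential Chebyshev bound. Since $\{N(x,t,r,R)\geq n\}=\{\sum_{i=0}^{n}\tau_i(x,r,R)\leq t\}$ and $\{N(x,t,r,R)\leq n\}=\{\sum_{i=0}^{n+1}\tau_i(x,r,R)>t\}$, with the analogous identities for $N'$, the three tail bounds reduce to concentration for such sums. By the strong Markov property at $T_i$, the conditional law of $\tau_{i+1}(x,r,R)$ given $\cF_{T_i}$ is $\P_{\xi_i(x)}(\tau_1(x,r,R)\in\cdot)$, which depends only on the endpoint $\xi_i(x)\in\boundary B(x,R)$; the analogous statement holds for $\tau'_{i+1}$ via $\xi'_i(x)\in\boundary B(x,r)$. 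Iterating, it suffices to obtain uniform control over $y\in\boundary B(x,R)$ of the Laplace transforms $\E_y[e^{\pm\lambda\tau_1(x,r,R)}]$, and similarly for $\tau'_1$.

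The key analytical input is the mean estimate
\begin{equation*}
\sup_{y\in\boundary B(x,R)}\E_y[\tau_1(x,r,R)]
=(1+o(1))\,\frac{r^{-(d-2)}-R^{-(d-2)}}{\kappa_d},
\end{equation*}
uniformly in $x\in\T^d$ and in $(r,R)$ with $r^{1-\delta}\leq R\leq c$, together with the same leading-order estimate for $\tau'_1$ (the difference $\tau_1-\tau'_1$ is a hitting time of $\boundary B(x,R)$ from $\boundary B(x,r)$ whose expectation is $O(R^2)$, negligible relative to $r^{-(d-2)}/\kappa_d$). This estimate reflects potential theory on the torus: $u(y)=\E_y[\tau'_1(x,r,R)]$ solves $-\tfrac12\Delta_{\T^d}u=1$ on $\T^d\setminus B(x,r)$ with $u=0$ on $\boundary B(x,r)$, and the Green function of $\T^d$ admits the Euclidean expansion $G_{\T^d}(y,z)=1/[\kappa_d\,d(y,z)^{d-2}]+O(1)$ on the scale $d(y,z)\leq\tfrac12$, so the far-field $O(1)$ contribution is absorbed by the divergent main term. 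This is essentially the content of \cite[Lemma~2.4]{DPR2003}. Combined with this, iterating the strong Markov property at time steps of size $2\sup_y\E_y[\tau_1]$ yields the geometric tail $\P_y(\tau_1>2k\sup_y\E_y[\tau_1])\leq 2^{-k}$, hence the uniform quadratic Laplace bound
\begin{equation*}
\E_y[e^{\pm\lambda\tau_1(x,r,R)}]\leq 1\pm\lambda\E_y[\tau_1]+C\lambda^2\E_y[\tau_1]^2
\qquad(|\lambda|\E_y[\tau_1]\leq c'),
\end{equation*}
and similarly for $\tau'_1$.

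With these inputs the three bounds follow by a standard Chernoff argument. For \eqref{e:NUpwardBound}, setting $n=\lceil(1+\delta)N_d(t,r,R)\rceil$ and $\lambda>0$, conditioning successively on $\cF_{T_{n-1}},\ldots,\cF_{T_0}$ gives
\begin{equation*}
\P_{x_0}\bigl(N(x,t,r,R)\geq n\bigr)
\leq e^{\lambda t}\,\E_{x_0}\bigl[e^{-\lambda T_n}\bigr]
\leq e^{\lambda t}\Bigl(\sup_{y\in\boundary B(x,R)}\E_y[e^{-\lambda\tau_1}]\Bigr)^{\!n}
\leq \exp\!\Bigl[\lambda t-n\bigl(\lambda\E[\tau_1]-C\lambda^2\E[\tau_1]^2\bigr)\Bigr],
\end{equation*}
and since $n\E[\tau_1]=(1+\delta)t(1+o(1))$, optimizing over $\lambda$ of order $\delta/\E[\tau_1]$ yields the desired bound $\exp[-c(\delta)N_d(t,r,R)]$. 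The lower bound \eqref{e:NDownwardBound} is the dual argument applied to $\sum_{i=0}^{n+1}\tau_i>t$ with $n=\lfloor(1-\delta)N_d\rfloor$ and $\lambda>0$, using the symmetric bound $\E_y[e^{\lambda\tau_1}]\leq 1+\lambda\E_y[\tau_1]+C\lambda^2\E_y[\tau_1]^2$ valid in the same Laplace window. The estimate \eqref{e:NprimeUpwardBound} for $N'$ is identical, with $\tau_i,T_i,\xi_i$ replaced throughout by $\tau'_i,T'_i,\xi'_i$.

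The main obstacle is the sharp mean estimate with precisely the constant $\kappa_d$, uniformly in $y\in\boundary B(x,R)$ and $x\in\T^d$: a purely local Euclidean calculation is insufficient on $\T^d$ because the Brownian motion spends a macroscopic amount of time visiting the bulk of the torus before returning to $\boundary B(x,r)$, and one must argue that this far-field contribution is only $O(1)$, hence negligible against the divergent main term $r^{-(d-2)}/\kappa_d$ in the regime $r^{1-\delta}\leq R\leq c$. Once this potential-theoretic input from \cite[Lemma~2.4]{DPR2003} is in hand, the rest is a routine Chernoff scheme.
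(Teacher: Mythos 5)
Your proposal is correct in outline but takes a genuinely different route from the paper. The paper's proof is shorter: it writes each event as a statement about the partial sum $\sum_{i=0}^N \tau_i(x,r,R)$ exactly as you do, and then \emph{directly applies} \cite[Lemma 2.4]{DPR2003}, which already provides the full concentration inequality
\begin{equation*}
\P_{x_0}\left( \Bigl\lvert \frac{\kappa_d}{N(r^{-(d-2)}-R^{-(d-2)})}\sum_{i=0}^N \tau_i -1\Bigr\rvert > \delta \right) \leq e^{-c\delta^2 N}
\end{equation*}
uniformly in $x,x_0$, with $c$ depending only on $\delta_0$ when $R>r^{1-\delta_0}$. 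Your plan cites DPR only for the \emph{mean} estimate and then rebuilds the concentration from scratch via a Chernoff argument with uniform Laplace bounds on $\tau_1$. This undersells the cited lemma: \cite[Lemma 2.4]{DPR2003} \emph{is} the concentration statement, so your Chernoff machinery re-derives what is already available as a black box. Both approaches stand on the same potential-theoretic bedrock (the Green function of $\T^d$ has a $1/[\kappa_d\,d(\cdot,\cdot)^{d-2}]$ local singularity plus a bounded far-field term), and both translations of $\{N\gtrless n\}$ into partial-sum events are correct, including the version for $N'$ via $\tau'_i$. So this is a different decomposition of the same labor: the paper outsources the concentration to DPR and only needs the change of variables, while you outsource only the mean and prove the concentration yourself.

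One step in your self-contained route is glossed over and needs attention if you actually carry it out. The iterated geometric tail bound $\P_y(\tau_1 > 2k\sup_y\E_y[\tau_1]) \leq 2^{-k}$ is obtained by applying Markov's inequality and the strong Markov property at deterministic time multiples of $2\sup_y\E_y[\tau_1]$. But at such a restart time the Brownian motion can be at an arbitrary point $z\in\T^d$ (possibly far from $B(x,R)$, or inside the annulus having already touched $\boundary B(x,r)$). To iterate you therefore need a uniform bound $\sup_{z\in\T^d}\E_z[\text{remaining time}]\lesssim r^{-(d-2)}/\kappa_d$, not merely the bound over $z\in\boundary B(x,R)$ that your mean estimate states. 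This does hold — it follows from the same torus potential theory since the excess over the local singular part is $O(1)$ — but it is a distinct statement from the one you wrote, and without it the iteration is not justified. The paper sidesteps this entirely because DPR's lemma already delivers the uniformity over $x_0$.
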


\begin{proof}
The result follows from a lemma in Dembo, Peres and Rosen~\cite{DPR2003}, which we 
reformulate in our notation.  (Note that the constant $\kappa_d$ defined by \eqref{kappadDefinition} 
corresponds to the quantity $1/\kappa_{\T^d}$ from~\cite[page 2]{DPR2003} rather than $\kappa_{\T^d}$.)

\begin{lemma}[{\rm \cite[Lemma 2.4]{DPR2003}}]
\lblemma{ExcursionTimes}
There is a constant $\eta>0$ such that if $N\geq \eta^{-1}$, $0<\delta<\delta_0<\eta$ and 
$0<2r\leq R<R_0(\delta)$, then for some $c=c(r,R)>0$ and uniformly in $x,x_0\in\T^d$,
\begin{equation}
\P_{x_0} \! \left( 1-\delta \leq \frac{\kappa_d}{N(r^{-(d-2)}-R^{-(d-2)})}
\sum_{i=0}^N \tau_i(x,r,R) \leq 1+\delta \right)
\geq 1-e^{-c\delta^2 N}. 
\end{equation}
Moreover, $c$ can be chosen to depend only on $\delta_0$ as soon as $R>r^{1-\delta_0}$.
The same result holds when $\tau'_i(x,r,R)$ is replaced by $\tau_i(x,r,R)$.
\end{lemma}
(The same result for $\tau'_i$ is not included in \cite{DPR2003}, but follows from the 
estimates in that paper. Indeed, $\tau_i-\tau'_i$ is shown to be an error term.)

To prove \refprop{ExcursionNumbers}, we begin with \eqref{NDownwardBound}. Fix 
$\delta>0$. We may assume without loss of generality that $\delta<\tfrac{1}{2}$ and 
$1/(1-\tfrac{1}{2}\delta)<1+\tfrac{2}{3}\delta<1+\eta$. Set $N=\floor{(1-\delta)N_d(t,r,R)}
+1$. Since $N/N_d(t,r,R) \to 1-\delta$ as $N_d(t,r,R)\to\infty$, we can choose $r$ small 
enough so that $\tfrac{1}{2}N_d(t,r,R)\leq N\leq (1-\tfrac{1}{2}\delta) N_d(t,r,R)$ and 
$N\geq\eta^{-1}$, uniformly in $R$ and $t>1$. We have 
\begin{equation}
\big\{N(x,t,r,R) \leq (1-\delta) N_d(t,r,R)\big\} 
= \set{N(x,t,r,R)<N} = \set{T_N\geq t}.
\end{equation} 
Since $T_N=\sum_{i=0}^N \tau_i(x,r,R)$, it follows that
\begin{align}
\P_{x_0} \! \big( N(x,t,r,R)\leq (1-\delta) N_d(t,r,R) \big)
&= \P_{x_0} \! \left( \sum_{i=0}^N \tau_i(x,r,R) \geq t \right)
= \P_{x_0} \! \left( \frac{\kappa_d\sum_{i=0}^N \tau_i(x,r,R)}{N(r^{-(d-2)}-R^{-(d-2)})} 
\geq \frac{N_d(t,r,R)}{N} \right)
\notag\\
&\leq \P_{x_0} \! \left( \frac{\kappa_d\sum_{i=0}^N \tau_i(x,r,R)}{N(r^{-(d-2)}-R^{-(d-2)})} 
\geq \frac{1}{1-\tfrac{1}{2}\delta} \right) \! .
\end{align}
Hence \eqref{NDownwardBound} follows from \reflemma{ExcursionTimes} with $\delta$ and $\delta_0$ 
replaced by $\tfrac{1}{2}\delta/(1-\tfrac{1}{2}\delta)$ and $\tfrac{2}{3}\delta$, respectively, 
with the constant $c$ in \refprop{ExcursionNumbers} chosen small enough so that $2r\leq R<R_0
[\tfrac{1}{2}\delta/(1-\tfrac{1}{2}\delta)]$.

The proof of \eqref{NprimeUpwardBound} is similar. Let $\delta>0$ be such that 
$\tfrac{1}{2}\delta/(1+\tfrac{1}{2}\delta)<\eta$ and set $N'=\ceiling{(1+\delta)N_d(t,r,R)}$. 
As before, we have 
\begin{equation}
\P_{x_0}\bigl(N'(t,x,r,R)\geq (1+\delta) N_d(t,r,R)\bigr) 
\leq \P_{x_0}\Bigl(\frac{\kappa_d\sum_{i=0}^{N'}\tau'_i(x,r,R)}{N'(r^{-(d-2)}-R^{-(d-2)})}
\leq \frac{1}{1+\tfrac12\delta} \Bigr)
\end{equation} 
and we can apply the version of \reflemma{ExcursionTimes} with $\tau'_i(x,r,R)$ instead of 
$\tau_i(x,r,R)$ and $\delta$ replaced by $\tfrac{1}{2}\delta/(1+\tfrac{1}{2}\delta)$.

Finally, because $N'(t,x,r,R) \leq N(t,x,r,R)$, \eqref{NUpwardBound} follows from 
\eqref{NprimeUpwardBound}.
\qed\end{proof}

\refprop{ExcursionNumbers} forms the link between the global structure of $\T^d$, notably 
the fact that a Brownian motion on $\T^d$ has a finite mean return time to a small ball, 
and the excursions of $W$ within small balls, during which $W$ cannot be distinguished from 
a Brownian motion on all of $\R^d$.


\subsection{Hitting sets by excursions}
\lbsubsect{BMCapa}

The concentration inequalities in \refprop{ExcursionNumbers} will allow us to treat the 
number of excursions as deterministic. This observation motivates the following definition.

\begin{definition}
\lbdefn{NSuccessful}
Let $0<r<R<\tfrac{1}{2}$, $\phi>0$ and $N\in\N$. A pair $(x,E)$ with $x\in\T^d$, $E\subset
\R^d$ Borel, will be called \emph{$(N,\phi,r,R)$-successful} if none of the first $N$ excursions 
of $W$ from $\boundary B(x,r)$ to $\boundary B(x,R)$ hit $x+\phi E$.
\end{definition}

\begin{proposition}
\lbprop{NSuccessfulProb}
Let $0<\epsilon<r<R<\tfrac{1}{2}$. Then, uniformly in $\phi>0$, $x_0,x\in\T^d$ and 
$E\subset\R^d$ a Borel set with $\phi E\subset B(0,\epsilon)$, and uniformly in 
$(\xi'_i(x),\xi_i(x))_{i=1}^N$,
\begin{equation}
\label{NSuccessfulProbFormula}
\begin{aligned}
&\P_{x_0} \! \condparenthesesreversed{ (x,E)
\text{\rm{ is $(N,\phi,r,R)$-successful}} }{(\xi'_i(x),\xi_i(x))_{i=1}^N}
\\
&\quad
= \exp\left[ -N \left( \frac{\phi}{r} \right)^{d-2} 
\frac{\Capa E }{\kappa_d} [1+o(1)] \right], \qquad r/\epsilon, R/r \to\infty.
\end{aligned}
\end{equation}
\end{proposition}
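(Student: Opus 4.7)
The plan is to reduce the conditional success probability to a product of one-excursion hitting probabilities via the strong Markov property, then to estimate each factor by the standard capacity-hitting asymptotic \eqref{CapacityAndHittingSimple} adapted to the annulus $B(x,R)\setminus (x+\phi E)$.

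First, I would apply the strong Markov property at the successive times $T'_i, T_i$. Conditional on the starting and ending points $(\xi'_i(x),\xi_i(x))_{i=1}^N$, the pieces $W[T'_i,T_i]$ are independent, each one being a Brownian motion started at $\xi'_i(x)\in\boundary B(x,r)$ and conditioned to first hit $\boundary B(x,R)$ at $\xi_i(x)$. Setting $A=x+\phi E$, this gives
\begin{equation*}
\P_{x_0}\condparenthesesreversed{(x,E)\text{ is $(N,\phi,r,R)$-successful}}{(\xi'_i(x),\xi_i(x))_{i=1}^N}
= \prod_{i=1}^N \bigl(1-p_i\bigr),
\end{equation*}
where $p_i=\P_{\xi'_i(x)}(\tau_A<\tau_{\boundary B(x,R)}\mid W(\tau_{\boundary B(x,R)})=\xi_i(x))$, and $\tau_F$ denotes the first hitting time of $F$.

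Next, I would show that $p_i = (\phi/r)^{d-2}\,\Capa E/\kappa_d\,[1+o(1)]$ uniformly in $\xi'_i(x),\xi_i(x)$. Write
\begin{equation*}
p_i = \frac{\P_{\xi'_i(x)}\bigl(\tau_A<\tau_{\boundary B(x,R)},\,W(\tau_{\boundary B(x,R)})\in d\xi_i\bigr)}
{\P_{\xi'_i(x)}\bigl(W(\tau_{\boundary B(x,R)})\in d\xi_i\bigr)}.
\end{equation*}
The denominator is the harmonic measure on $\boundary B(x,R)$ seen from $\xi'_i(x)\in\boundary B(x,r)$, whose Poisson kernel relative to the centre $x$ is $1+O(r/R)$-close to uniform. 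Applying the strong Markov property at $\tau_A$ to the numerator gives
\begin{equation*}
\text{numerator}=\E_{\xi'_i(x)}\bigl[\mathbbm{1}_{\tau_A<\tau_{\boundary B(x,R)}}\,\P_{W(\tau_A)}(W(\tau_{\boundary B(x,R)})\in d\xi_i)\bigr],
\end{equation*}
and since $W(\tau_A)\in A\subset B(x,\epsilon)$ its Poisson kernel is $1+O(\epsilon/R)$-close to uniform as well. Dividing, the uniform densities cancel and
\begin{equation*}
p_i = \P_{\xi'_i(x)}(\tau_A<\tau_{\boundary B(x,R)})\,[1+o(1)].
\end{equation*}
For the remaining unconditional hitting probability I would use the annular Green's function $G_R(\cdot,\cdot)$ for BM killed at $\boundary B(x,R)$. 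For $y\in\boundary B(x,r)$ and $z\in A\subset B(x,\epsilon)$ with $\epsilon\ll r\ll R$,
\begin{equation*}
G_R(y,z) = \frac{1}{\kappa_d}\bigl(r^{-(d-2)}-R^{-(d-2)}\bigr)\,[1+o(1)],
\end{equation*}
uniformly over such $y,z$. Integrating against the equilibrium measure of $A$ and using \eqref{CapacityScaling} gives $\P_y(\tau_A<\tau_{\boundary B(x,R)})=(\phi/r)^{d-2}\,\Capa E/\kappa_d\,[1+o(1)]$, since $R/r\to\infty$ absorbs the $R^{-(d-2)}$ term.

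Finally, because $p_i\leq (\epsilon/r)^{d-2}\Capa(E)\phi^{d-2}/(\epsilon^{d-2}\kappa_d)\to 0$ as $\epsilon/r\to 0$ (using $\phi E\subset B(0,\epsilon)$), the expansion $\log(1-p_i)=-p_i[1+o(1)]$ applies, so
\begin{equation*}
\prod_{i=1}^N(1-p_i) = \exp\Bigl[-N\,(\phi/r)^{d-2}\tfrac{\Capa E}{\kappa_d}\,[1+o(1)]\Bigr],
\end{equation*}
which is the claim. I expect the main technical obstacle to be the uniform annular Green's function estimate: one has to show that $G_R(y,z)$ is well-approximated by the whole-space Green's function $1/\kappa_d d(y,z)^{d-2}$ minus its boundary correction, with the error uniform in $y\in\boundary B(x,r)$, $z\in B(x,\epsilon)$ and (especially) in the ratios $\epsilon/r$ and $r/R$. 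Once this is in hand, the Poisson-kernel near-uniformity argument that removes the conditioning on $\xi_i(x)$ is a direct consequence of the explicit ball Poisson kernel, and the remaining algebra is routine.
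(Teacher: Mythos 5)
Your high-level structure is the same as the paper's: condition on the excursion endpoints to obtain conditional independence of the excursions, write the success probability as a product $\prod_{i=1}^N(1-p_i)$, estimate each $p_i$ uniformly in the endpoints, and pass to the exponential form by expanding $\log(1-p_i)$ using $p_i\to 0$. Where you genuinely diverge is in how the per-excursion probability $p_i$ is estimated. The paper packages this into \reflemma{CapacityAndHittingDistant}, whose proof (in \refappendix{ExcursionProofs}) removes the conditioning on $\xi_i$ via a last-exit decomposition (introducing an intermediate radius $\tilde r$ and the density $f_{\tilde r,R}\to 1$), then compares the unconditional excursion-hitting probability to the whole-space hitting probability $\P_{\xi'}(W[0,\infty)\cap E\neq\emptyset)$ and finishes with the exact surface-average identity \refeq{CapacityAndHittingUniform}, again smoothing over the starting point with a nearly uniform density $g_\epsilon$. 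You instead remove the conditioning on $\xi_i$ directly by comparing numerator and denominator Poisson kernels on $\boundary B(x,R)$ (both $1+O(r/R)$- resp. $1+O(\epsilon/R)$-close to uniform), and then estimate $\P_{\xi'}(\tau_A<\tau_{\boundary B(x,R)})$ by integrating the ball Green's function $G_R$ against the equilibrium measure of $A$, using the uniform estimate $G_R(y,z)=\kappa_d^{-1}(r^{-(d-2)}-R^{-(d-2)})(1+o(1))$ for $y\in\boundary B(x,r)$, $z\in B(x,\epsilon)$. Both routes ultimately rest on harmonic-measure near-uniformity, but yours works wholly inside the ball $B(x,R)$ whereas the paper's detours through the whole-space capacity identity and then subtracts the post-exit correction. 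One small point you should make explicit: when you integrate $G_R(y,\cdot)$ against ``the equilibrium measure of $A$'', you need the equilibrium measure (and capacity) of $A$ relative to the ball $B(x,R)$, and you must observe that this relative capacity equals $\Capa A\,(1+O((\epsilon/R)^{d-2}))$ because $G_R=G\,(1+O((\epsilon/R)^{d-2}))$ uniformly on $B(x,\epsilon)\times B(x,\epsilon)$. With that noted, your argument is correct; the uniform annular Green's function estimate you flag as the main obstacle is indeed where the work lies, and it is the exact analogue of the work done in the appendix proof of \reflemma{CapacityAndHittingDistant}.
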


\noindent
Since the error term is uniform in $(\xi'_i(x),\xi_i(x))_{i=1}^N$, \refprop{NSuccessfulProb} 
also applies to the unconditional probability $\P_{x_0}( (x,E)\text{\rm{ is 
$(N,\phi,r,R)$-successful}} )$.  

To prove \refprop{NSuccessfulProb} we need the following lemma for the hitting probability 
of a single excursion given its starting and ending points. For $\xi'\in\boundary B(x,r)$, 
$\xi\in\boundary B(x,R)$, write $\P_{\xi',\xi}$ for the law of an excursion $W[0,\zeta_R]$, 
$\zeta_R=\inf\set{t\geq 0\colon\, d(x,W(t))\geq R}$, from $\boundary B(x,r)$ to $\boundary 
B(x,R)$, started at $\xi'$ and conditioned to end at $\xi$.

\begin{lemma}
\lblemma{CapacityAndHittingDistant}
Let $0<\epsilon<r<R<\tfrac{1}{2}$. Then, uniformly in $x\in\T^d$, $\xi'\in\boundary B(x,r),
\xi\in\boundary B(x,R)$ and $E$ a Borel set with $E\subset B(0,\epsilon)$,
\begin{equation}
\P_{\xi',\xi}((x+E)\intersect W[0,\zeta_R]\neq\emptyset) 
= \frac{\Capa E}{\kappa_d \, r^{d-2}}\,[1+o(1)],
\qquad r/\epsilon, R/r\to\infty.
\end{equation}
\end{lemma}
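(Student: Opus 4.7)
By translation invariance I take $x=0$ and set $T_E=\inf\{s\geq 0\colon W(s)\in E\}$, so the event of interest is $\{T_E<\zeta_R\}$. Let $h_R(y,\xi)$ denote the Poisson kernel for $B(0,R)$, i.e., the density of $W(\zeta_R)$ under $\P_y$ with respect to surface measure on $\boundary B(0,R)$; explicitly,
\[
h_R(y,\xi)=\frac{R^2-|y|^2}{c_d\,R\,|\xi-y|^d}.
\]
Disintegrating with respect to $W(\zeta_R)=\xi$ and applying the strong Markov property at $T_E$ gives
\[
\P_{\xi',\xi}(T_E<\zeta_R)=\frac{1}{h_R(\xi',\xi)}\int_{\overline{E}}
\P_{\xi'}\bigl(T_E<\zeta_R,\,W(T_E)\in dy\bigr)\,h_R(y,\xi).
\]

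For $y\in B(0,\epsilon)$ and $|\xi|=R$ the explicit formula gives $R^2-|y|^2=R^2[1+O((\epsilon/R)^2)]$ and $|\xi-y|=R[1+O(\epsilon/R)]$, so $h_R(y,\xi)=h_R(0,\xi)[1+O(\epsilon/R)]$; an identical expansion yields $h_R(\xi',\xi)=h_R(0,\xi)[1+O(r/R)]$ uniformly in $\xi'\in\boundary B(0,r)$, $\xi\in\boundary B(0,R)$. Since $r/R\to 0$,
\[
\frac{h_R(y,\xi)}{h_R(\xi',\xi)}=1+o(1)
\]
uniformly in $y\in\overline{E}$, $\xi'$ and $\xi$, so the previous display simplifies to $\P_{\xi',\xi}(T_E<\zeta_R)=[1+o(1)]\,\P_{\xi'}(T_E<\zeta_R)$.

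The unconditional probability is standard: using the equilibrium-measure representation $\P_{\xi'}(T_E<\infty)=\int G(\xi',y)\,e_E(dy)$ with $e_E$ of total mass $\Capa E$, together with $G(\xi',y)=[\kappa_d|\xi'-y|^{d-2}]^{-1}$ and $|\xi'-y|=r[1+O(\epsilon/r)]$ uniformly in $y\in E$, I get
\[
\P_{\xi'}(T_E<\infty)=\frac{\Capa E}{\kappa_d\,r^{d-2}}\,[1+O(\epsilon/r)].
\]
To replace ``hitting ever'' by ``hitting before $\zeta_R$'' I apply the strong Markov property at $\zeta_R$ and the same bound for starts on $\boundary B(0,R)$:
\[
0\leq\P_{\xi'}(T_E<\infty)-\P_{\xi'}(T_E<\zeta_R)\leq\sup_{|w|=R}\P_w(T_E<\infty)=O\!\left(\frac{\Capa E}{\kappa_d\,R^{d-2}}\right),
\]
which is smaller than the main term by a factor $(r/R)^{d-2}=o(1)$. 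Combining these bounds with the conditional/unconditional reduction above proves the claim.

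The main obstacle will be ensuring that the two Poisson-kernel expansions hold with error terms that are uniform in $\xi$, $\xi'$ and $y\in\overline{E}$; once that and the uniform equilibrium-measure estimate are in place, the rest is a routine chain of strong-Markov arguments together with \eqref{CapacityAndHittingSimple}-type bounds.
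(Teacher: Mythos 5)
Your proof is correct, and it takes a genuinely different route from the paper's. To eliminate the conditioning on the exit point $\xi$, the paper decomposes the excursion at its last exit from $B(0,r)$ and at an intermediate sphere $\partial B(0,\tilde r)$ with $\tilde r=\sqrt{rR}$, then uses that the exit density $f_{\tilde r,R}(\cdot,\xi)$ of a Brownian motion conditioned to reach $\partial B(0,R)$ before returning to $B(0,r)$ tends to $1$ as $R/\tilde r\to\infty$. You instead apply the strong Markov property at the first hitting time $T_E$ and compare the explicit Poisson kernel $h_R(W(T_E),\xi)$ with $h_R(\xi',\xi)$, which avoids the intermediate sphere altogether and gives the ratio $1+O(r/R)$ directly from the algebraic form of the kernel. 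Similarly, for the unconditioned hitting probability the paper averages over the harmonic measure of $\partial B(0,\epsilon)$ and invokes the integral identity \eqref{CapacityAndHittingUniform}, whereas you use the equilibrium-measure representation $\P_{\xi'}(T_E<\infty)=\int G(\xi',y)\,e_E(dy)$ pointwise together with $G(\xi',y)=[\kappa_d|\xi'-y|^{d-2}]^{-1}=[\kappa_d r^{d-2}]^{-1}[1+O(\epsilon/r)]$. Both routes reach the same asymptotics; the paper's is ``softer'' (no explicit Poisson kernel needed), yours is shorter and more self-contained, at the cost of requiring the closed-form kernel expansion to be verified with uniform error terms, which you correctly identify as the remaining bookkeeping. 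Your replacement of $\{T_E<\infty\}$ by $\{T_E<\zeta_R\}$ via a strong-Markov bound at $\zeta_R$, giving an error $O(\Capa E/\kappa_d R^{d-2})=(r/R)^{d-2}\cdot O(\Capa E/\kappa_d r^{d-2})$, also matches the paper's cancellation \eqref{DifferenceOfHitting} and is handled correctly.
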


\reflemma{CapacityAndHittingDistant} is a more elaborate version of 
\eqref{CapacityAndHittingSimple}: it states that the asymptotics of 
\eqref{CapacityAndHittingSimple} remain valid when we stop the Brownian motion upon 
exiting a sufficiently distant ball, and hold conditionally and uniformly, provided the balls and the 
set are well separated. In the proof we use the relation
\begin{equation}
\label{CapacityAndHittingUniform}
\int_{\boundary B(0,r)} \P_x(E\intersect W\cointerval{0,\infty}\neq\emptyset) 
\,d\sigma_r(x)
= \frac{\Capa E}{\kappa_d \, r^{d-2}}, \qquad \text{$E$ a Borel subset of  $B(0,r)$},
\end{equation}
where $\sigma_r$ denotes the uniform measure on $\boundary B(0,r)$. Equation 
\eqref{CapacityAndHittingUniform} becomes an identity as soon as $B(0,r)$ contains 
$E$, and as such it is a more precise version of \eqref{CapacityAndHittingSimple}: 
see Port and Stone~\cite[Chapter 3, Theorem 1.10]{PS1978} and surrounding material. 

We defer the proof of \reflemma{CapacityAndHittingDistant} to 
\refsubsect{CapaHittingDistantProof}. We can now prove \refprop{NSuccessfulProb}.

\begin{proof}
Conditional on their starting and ending points $(\xi'_i(x),\xi_i(x))_{i=1}^N$, the successive 
excursions from $\boundary B(x,r)$ to $\boundary B(x,R)$ are independent with laws 
$\P_{\xi'_i(x),\xi_i(x)}$.  Applying \reflemma{CapacityAndHittingDistant}, we have
\begin{align}
&\P_{x_0} \! \condparenthesesreversed{(x,E)\text{ is $(N,\phi,r,R)$-successful}}
{(\xi'_i(x),\xi_i(x))_{i=1}^N}\notag\\
&\quad = \prod_{i=1}^N \P_{\xi'_i(x),\xi_i(x)}((x+\phi E)\intersect W[0,\zeta_R] = \emptyset)
= \left( 1-\frac{\Capa(\phi E)}{\kappa_d \, r^{d-2}}\,[1+o(1)] \right)^N \! .
\label{NSuccessfulProduct}
\end{align}
Since $\Capa(\phi E)\leq\kappa_d\,\epsilon^{d-2} = o(r^{d-2})$ as $r/\epsilon\to\infty$, 
we can rewrite the right-hand side of \eqref{NSuccessfulProduct} as
\begin{equation}
\exp\left[-N\frac{\Capa(\phi E)}{\kappa_d\,r^{d-2}}\,[1+o(1)] \right],
\end{equation}
so that the scaling relation in \eqref{CapacityScaling} implies the claim.
\qed\end{proof}


\subsection{Properties of capacity}
\lbsubsect{ContCapa}

In this section we collect a few elementary properties of capacity.


\subsubsection{Continuity}

\begin{proposition}
\lbprop{CapacityContinuity}
Let $E$ denote a Borel subset of $\R^d$. 
\begin{enumerate}
\item
\lbitem{CapacityCompact}
If $E$ is compact, then $\Capa E_r\decreasesto\Capa E$ as $r\decreasesto 0$.
\item
\lbitem{CapacityOpen}
If $E$ is open, then $\Capa E_{-r}\increasesto\Capa E$ as $r\decreasesto 0$.
\item
\lbitem{CapacityCtyPoint}
If $E$ is bounded with $\Capa(\closure{E})=\Capa(\interior{E})$, then 
$\Capa E_r\decreasesto\Capa E$ and $\Capa E_{-r}\increasesto\Capa E$ as 
$r\decreasesto 0$.
\end{enumerate}
\end{proposition}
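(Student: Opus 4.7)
The plan is to use the energy formulation \eqref{CapacityDoubleInt} of capacity, combined with weak convergence of probability measures and lower semi-continuity of the Green energy functional $\nu \mapsto \iint G\,d(\nu\otimes\nu)$, where $G$ is the non-negative Green kernel of \eqref{Green}. For part \refitem{CapacityCompact}, one direction is trivial: $E\subset E_r$ implies $\cP(E)\subset\cP(E_r)$ and hence $\Capa E \leq \Capa E_r$, with $r\mapsto\Capa E_r$ monotone. For the reverse, I pick $r_n\downarrow 0$ and near-minimising measures $\mu_n\in\cP(E_{r_n})$ whose energy is within $1/n$ of $1/\Capa E_{r_n}$; all $\mu_n$ are supported in the compact set $E_1$, so Prokhorov yields a weakly convergent subsequence $\mu_{n_k}\to\mu$. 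Since $E$ is closed, $\bigcap_{r>0}E_r=E$, and applying the portmanteau lemma to the open sets $E_\epsilon^c$ (for arbitrary $\epsilon>0$) forces $\mu(E)=1$, so $\mu\in\cP(E)$. Lower semi-continuity then yields $1/\Capa E \leq \iint G\,d\mu\,d\mu \leq \liminf_k(1/\Capa E_{r_{n_k}})$, giving $\limsup_k \Capa E_{r_{n_k}}\leq\Capa E$; since the subsequence was arbitrary, $\Capa E_r\downarrow\Capa E$.

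For part \refitem{CapacityOpen}, I first verify that $E_{-r}=\{y\colon d(y,E^c)>r\}$, so $E_{-r}\uparrow E$ as $r\downarrow 0$ because $E$ is open. Monotonicity gives $\Capa E_{-r}\leq\Capa E$; for the reverse, given $\epsilon>0$, I choose $\mu\in\cP(E)$ with $\iint G\,d\mu\,d\mu < 1/\Capa E + \epsilon$ and form the conditioned restriction $\mu_r=\mu(\,\cdot\cap E_{-r})/\mu(E_{-r})\in\cP(E_{-r})$, well defined for small $r$ since $\mu(E_{-r})\uparrow 1$. Monotone convergence applied to $G\geq 0$ (using $\mathbbm{1}_{E_{-r}\times E_{-r}}\uparrow\mathbbm{1}_{E\times E}$ pointwise) yields $\iint G\,d\mu_r\,d\mu_r\to\iint G\,d\mu\,d\mu$, so $\limsup_{r\downarrow 0}1/\Capa E_{-r}\leq 1/\Capa E+\epsilon$, and $\epsilon\downarrow 0$ concludes.

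Part \refitem{CapacityCtyPoint} then follows as a corollary of (a) and (b) via two set identities. First, $\closure E\subset E_r\subset(\closure E)_r$, so the sandwich combined with (a) applied to the compact set $\closure E$, together with the hypothesis $\Capa(\closure E)=\Capa(\interior E)=\Capa E$, yields $\Capa E_r\downarrow\Capa E$. Second, since $d(\cdot,E^c)=d(\cdot,\closure{E^c})=d(\cdot,(\interior E)^c)$, one has $E_{-r}=(\interior E)_{-r}$; applying (b) to the bounded open set $\interior E$ then gives $\Capa E_{-r}\uparrow\Capa E$.

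The principal obstacle is the weak-lower-semi-continuity of the Green energy used in (a): because $G$ blows up on the diagonal, the functional $\nu\mapsto\iint G\,d(\nu\otimes\nu)$ is not weakly continuous, so one cannot just pass to the limit. I handle this by approximating $G$ from below by the truncations $G_n:=G\wedge n$, which are bounded and continuous on $\R^d\times\R^d$ (they equal $n$ in a fixed neighbourhood of the diagonal and agree with $G$ outside), so that weak convergence $\nu_k\to\nu$ (implying $\nu_k\otimes\nu_k\to\nu\otimes\nu$ weakly) yields $\iint G_n\,d(\nu_k\otimes\nu_k)\to\iint G_n\,d(\nu\otimes\nu)$; the inequality $\iint G_n\,d(\nu_k\otimes\nu_k)\leq\iint G\,d(\nu_k\otimes\nu_k)$ and monotone convergence in $n$ then transfer the semi-continuity from each $G_n$ to $G$.
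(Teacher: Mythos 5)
Your proof takes a genuinely different route from the paper. The paper's argument is one line: it cites Port and Stone (Chapter~3, Proposition~1.13) for the Choquet-capacitability facts that $\Capa$ is continuous along increasing open sequences and along decreasing compact sequences, and then combines these with the elementary set identities $E_r\decreasesto\closure{E}$ and $E_{-r}\increasesto\interior{E}$. You instead reprove the needed special cases directly from the energy formulation \eqref{CapacityDoubleInt}, using Prokhorov, the portmanteau lemma and lower semi-continuity of the Green energy (handled correctly via the bounded truncations $G\wedge n$). Your argument is self-contained at the cost of some length; the paper's argument is shorter but requires the reader to unpack a reference. Both are valid. Your observation that one sequence suffices in part~\refitem{CapacityCompact} (because $r\mapsto\Capa E_r$ is monotone) could have been used to dispense with the ``subsequence was arbitrary'' step.

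There is one small slip in part~\refitem{CapacityCtyPoint}: the claimed identity $E_{-r}=(\interior{E})_{-r}$ is not exactly true. By definition $E_{-r}=[\bigcup_{x\in E^c}B(x,r)]^c$ with \emph{closed} balls, while $(\interior{E})_{-r}=[\bigcup_{x\in\closure{E^c}}B(x,r)]^c$. Since $E^c$ need not be closed, one only gets $(\interior{E})_{-r}\subset E_{-r}$, and the difference can be non-empty: e.g.\ in $\R$, for $E=(-\infty,0]\cup[1,\infty)$ and $r<\tfrac12$, one has $E_{-r}=(-\infty,-r]\cup[1+r,\infty)$ whereas $(\interior{E})_{-r}=(-\infty,-r)\cup(1+r,\infty)$. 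Your chain $d(\cdot,E^c)=d(\cdot,\closure{E^c})$ is true as a statement about distance functions, but $E_{-r}$ is defined as the complement of a union of closed balls, which agrees with the sub-level set $\{y\colon d(y,E^c)>r\}$ only when $E^c$ is closed. The conclusion of part~\refitem{CapacityCtyPoint} nevertheless holds, by a sandwich: every $y\in E_{-r}$ has $B(y,r)\cap E^c=\emptyset$, hence $y\in\interior{E}$, so $(\interior{E})_{-r}\subset E_{-r}\subset\interior{E}$; monotonicity of $\Capa$ together with part~\refitem{CapacityOpen} applied to the open set $\interior{E}$ then yields $\Capa E_{-r}\increasesto\Capa(\interior{E})=\Capa E$.
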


\begin{proof}
For $r\decreasesto 0$ we have $E_r\decreasesto\closure{E}$ and $E_{-r}\increasesto\interior{E}$ 
for any set $E$. By Port and Stone~\cite[Chapter 3, Proposition 1.13]{PS1978}, it follows that 
$\Capa E_{-r} \increasesto \Capa(\interior{E})$ and, if $E$ is bounded, $\Capa E_r\decreasesto
\Capa(\closure{E})$. The statements about $E$ follow depending on which inequalities in 
$\Capa(\interior{E})\leq\Capa E\leq\Capa(\closure{E})$ are equalities.
\qed\end{proof}

\refprop{CapacityContinuity} is a statement about the continuity of $E\mapsto\Capa E$ with 
respect to enlargement and shrinking. The assumptions on $E$ are necessary, since there are 
sets $E$ with $\Capa(\closure{E})>\Capa(\interior{E})$. Note that $E\mapsto\Capa E$ is 
\emph{not} continuous with respect to the Hausdorff metric, even when restricted to 
reasonable classes of sets. For instance, the finite sets $B(0,1)\intersect\tfrac{1}{n}\Z^d$ 
converge to $B(0,1)$ in the Hausdorff metric, but have zero capacity for all $n$.


\subsubsection{Asymptotic additivity}

\begin{lemma}
\lblemma{SeparatedCapacity}
Let $0<\epsilon<r$. Then, uniformly in $x_1,x_2\in\R^d$ with $d(x_1,x_2)\geq r$ 
and $E^{(1)},E^{(2)}$ Borel subsets of $\R^d$ with $E^{(1)},E^{(2)}\subset B(0,\epsilon)$,
\begin{equation}
\Capa\big((x_1+E^{(1)})\union (x_2+E^{(2)})\big) 
= \big( \! \Capa E^{(1)} + \Capa E^{(2)}\big)\,[1-o(1)], 
\qquad r/\epsilon\to\infty.
\end{equation}
\end{lemma}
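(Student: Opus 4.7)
Write $A=x_1+E^{(1)}$ and $B=x_2+E^{(2)}$; translation invariance of capacity gives $\Capa A=\Capa E^{(1)}$ and $\Capa B=\Capa E^{(2)}$, and the two sets are disjoint as soon as $r>2\epsilon$. The upper bound $\Capa(A\union B)\leq \Capa A+\Capa B$ is immediate from the subadditivity \eqref{CapacityOfUnion}, so only the matching lower bound requires work. I would derive it from the Brownian hitting characterization \eqref{CapacityAndHittingSimple}, combined with a standard bound on how unlikely a Brownian motion starting near one of the two sets is to hit the other.

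For $x\in\R^d$ with $d(x,0)$ large, inclusion–exclusion for $\P_x(W\cointerval{0,\infty}\intersect(A\union B)\neq\emptyset)$ combined with \eqref{CapacityAndHittingSimple} applied to each of $A$, $B$, and $A\union B$ yields
\[
\Capa(A\union B) = \Capa A+\Capa B-\kappa_d\lim_{d(x,0)\to\infty} d(x,0)^{d-2}\,\P_x\bigl(W\text{ hits both }A\text{ and }B\bigr).
\]
Decomposing the event according to which of $A,B$ is hit first and applying the strong Markov property at the corresponding entrance time gives
\[
\P_x(W\text{ hits both})\leq \P_x(W\text{ hits }A)\sup_{y\in A}\P_y(W\text{ hits }B)+\P_x(W\text{ hits }B)\sup_{y\in B}\P_y(W\text{ hits }A).
\]
Since $B\subset B(x_2,\epsilon)$ and every $y\in A$ satisfies $d(y,x_2)\geq r-\epsilon$, monotonicity in the target set and the classical exact formula $\P_y(W\text{ hits }B(x_2,\epsilon))=(\epsilon/d(y,x_2))^{d-2}$ for $d(y,x_2)\geq\epsilon$ give
\[
\sup_{y\in A}\P_y(W\text{ hits }B)\leq \sup_{y\in A}\P_y\bigl(W\text{ hits }B(x_2,\epsilon)\bigr)\leq \left(\frac{\epsilon}{r-\epsilon}\right)^{d-2},
\]
with the symmetric bound for $\sup_{y\in B}\P_y(W\text{ hits }A)$. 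Multiplying through by $d(x,0)^{d-2}$ and letting $d(x,0)\to\infty$ using \eqref{CapacityAndHittingSimple} once more shows that the final term in the displayed identity for $\Capa(A\union B)$ is at most $(\epsilon/(r-\epsilon))^{d-2}(\Capa A+\Capa B)=o(1)\,(\Capa A+\Capa B)$ as $r/\epsilon\to\infty$, yielding the lower bound. The estimates above are uniform in $x_1,x_2$ and in $E^{(1)},E^{(2)}$, as required.

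There is no real obstacle; the proof amounts to a \emph{quantitative decorrelation} of the two hitting events at hitting-probability level, which then propagates to capacities via \eqref{CapacityAndHittingSimple}. The only ingredient beyond what is already stated in the paper is the explicit ball-hitting formula, which provides the $o(1)$ decorrelation factor $(\epsilon/r)^{d-2}$. An equivalent route would take $\nu=e_A+e_B$ with $e_A,e_B$ the equilibrium measures of $A,B$, verify that the Newtonian potential satisfies $G\nu\leq 1+o(1)$ on $A\union B$ (again using $d(x,y)\geq r-2\epsilon$ between the two sets), and then conclude via the dual characterization $\Capa E=\sup\{\abs{\mu}:\supp\mu\subset E,\,G\mu\leq 1\text{ on }E\}$.
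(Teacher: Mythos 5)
Your proof is correct and follows essentially the same route as the paper: reduce to bounding $\Capa A+\Capa B-\Capa(A\cup B)$ as a ``both hit'' probability, decompose by which set is hit first, apply the strong Markov property, and use the explicit ball-hitting estimate $(\epsilon/(r-\epsilon))^{d-2}$ for the second hit. The only minor difference is that the paper works with the exact finite-radius identity \eqref{CapacityAndHittingUniform} on $\boundary B(0,\tilde r)$, whereas you pass to the limit $d(x,0)\to\infty$ via \eqref{CapacityAndHittingSimple} (which is fine, since the existence of the limit for the ``both hit'' probability follows from inclusion--exclusion applied to the three limits you already have).
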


\begin{proof}
Fix $\tilde{r}$ large enough so that $(x_1+E^{(1)})\union (x_2+E^{(2)})\subset B(0,
\tilde{r})$. On the event $\set{W\text{ hits }x_j+E^{(j)}}$, write $Y_j$ for the 
first point of $x_j+E^{(j)}$ hit by $W$. Applying \eqref{CapacityOfUnion}, 
\eqref{CapacityAndHittingUniform}, and the Markov property, we get 
\begin{align}
0
&\leq \Capa(x_1+E^{(1)})+\Capa(x_2+E^{(2)})
-\Capa\big((x_1+E^{(1)})\union (x_2+E^{(2)})\big) \notag\\
&= \kappa_d \, \tilde{r}^{d-2} \int_{\boundary B(0,\tilde{r})} 
\P_x \! \left( W\text{ hits $x_1+E^{(1)}$ and }x_2+E^{(2)} \right) 
d\sigma_{\tilde{r}}(x) \notag\\
&\leq \sum_{\set{j,j'}=\set{1,2}} \kappa_d \, \tilde{r}^{d-2} \int_{\boundary B(0,\tilde{r})} 
\E_x \! \left( \indicator{W \text{ hits $x_j+E^{(j)}$}} 
\P_{Y_j}\big(W\text{ hits } B(x_{j'},\epsilon)\big) \right) d\sigma_{\tilde{r}}(x) \notag\\
&\leq \sum_{\set{j,j'}=\set{1,2}} \kappa_d \, \tilde{r}^{d-2} \int_{\boundary B(0,\tilde{r})} 
\P_x \! \left( W\text{ hits } x_j+E^{(j)} \right) 
\frac{\epsilon^{d-2}}{(r-\epsilon)^{d-2}}\, d\sigma_{\tilde{r}}(x) \notag\\
&= \frac{\epsilon^{d-2}}{(r-\epsilon)^{d-2}} \left( \Capa E^{(1)} +\Capa E^{(2)} \right),
\end{align}
where the second inequality uses that every $Y_j\in x_j+E^{(j)}$ is at least a distance 
$r-\epsilon$ from $x_{j'}$. But $(\epsilon/(r-\epsilon))^{d-2}=o(1)$ for $r/\epsilon
\downarrow 0$, and so the claim follows.
\qed\end{proof}


\section{Non-intersection probabilities for lattice animals}
\lbsect{LatticeAnimals}

An event such as 
\begin{equation}
\set{\exists x\in\T^d\colon\, (x+\phi_d(t)E)\intersect W[0,t]=\emptyset}
\end{equation} 
is a simultaneous statement about an infinite collection $(x+\phi_d(t) E)_{x\in\T^d}$ of 
sets. In this section, we apply the results of \refsect{Preparations} to prove simultaneous
statements for a finite collection of discretized sets, the lattice animals defined below. 
\refSubSect{HitLaLaAn} proves a bound for sets of large capacity that forms the basis 
for \refthm{CapacitiesInWc}, while \refsubsect{HitSmLaAn} proves bounds for sets of 
small capacity that form the basis for Theorems~\ref{t:ComponentCounts}--\ref{t:NoTranslates}.

\begin{definition}
A \emph{lattice animal} is a connected set $A\subset\R^d$ that is the union of a finite 
number of closed unit cubes with centres in $\Z^d$. We write $\cA^\boxempty$ for the 
collection of all lattice animals, and $\cA^\boxempty_Q$ for the collection of lattice 
animals $A\in\cA^\boxempty$ that contain $0$ and consist of at most $Q$ unit cubes.
\end{definition}

It is readily verified that, for any $d\geq 2$, there is a constant $C<\infty$ such that 
\begin{equation}
\label{LatticeAnimalGrowth}
\shortabs{\cA^\boxempty_Q} \leq e^{CQ}, \qquad Q\in\N.
\end{equation}
In fact, subadditivity arguments show that $|\cA^\boxempty_Q|$ grows exponentially, 
in the sense that $\lim_{Q\to\infty}|\cA^\boxempty_Q|^{1/Q}$ exists in $(1,\infty)$ 
for any $d\geq 2$. See, for instance, Klarner~\cite{K1967} for the case $d=2$, or Mejia 
Miranda and Slade~\cite[Lemma 2]{MejMirSlade2011} for a general upper bound that implies 
\eqref{LatticeAnimalGrowth}.

Lattice animals are commonly considered as discrete combinatorial objects. In our context, 
we can identify $A\in\cA^\boxempty$ with the collection $A\intersect\Z^d$ of lattice points 
in $A$. Requiring $A$ to be a connected subset of $\R^d$ is then equivalent to requiring 
the vertices $A\intersect\Z^d$ to form a connected subgraph of the lattice $\Z^d$. (Because 
of the details of our definition, the relevant choice of lattice structure is that vertices 
$x,y\in\Z^d$ are adjacent when their $\ell_\infty$-distance is $1$.)

For $n\in\N$, set $G_n=x+\tfrac{1}{n}\Z^d$ to be a \emph{grid} of $n^d$ points in $\T^d$, 
for some $x\in\T^d$. The choice of $x$ (i.e., the alignment of the grid) will generally 
not be relevant to our purposes.


\subsection{Large lattice animals}
\lbsubsect{HitLaLaAn}

\begin{proposition}
\lbprop{HitLatticeAnimal}
Fix an integer-valued function $t \mapsto n(t)$ such that 
\begin{equation}
\label{BoundOnCubeNumber}
\lim_{t\to\infty} \frac{n(t)\phi_d(t)}{(\log t)^{1/d}} = 0.
\end{equation}
Given $A\in\cA^\boxempty$, write $E(A)=n(t)^{-1}\phi_d(t)^{-1}A$.  Then, for each $\kappa$,
\begin{equation}
\label{HitLatticeAnimalProb}
\limsup_{t\to\infty}
\frac{\log\P_{x_0} \! \left( \exists x\in G_{n(t)}, 
A\in\cA^\boxempty\colon\, \Capa E(A)
\geq \kappa,(x+\phi_d(t)E(A))\intersect W[0,t]=\emptyset \right)}{\log t}
\leq J_d(\kappa).
\end{equation}
\end{proposition}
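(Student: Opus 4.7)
The plan is a union bound over pairs $(x,A)\in G_{n(t)}\times\cA^\boxempty$ with $\Capa E(A)\geq\kappa$, combined with the excursion estimates of Propositions~\ref{p:NSuccessfulProb} and \ref{p:ExcursionNumbers} and the lattice-animal count \eqref{LatticeAnimalGrowth}. The role of the discretization is to replace the uncountable collection of Borel subsets $E\subset\R^d$ with $\Capa E\geq\kappa\phi_d(t)^{d-2}$ by a finite union over lattice animals $A$.

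First, I fix excursion radii $R$ a small constant and $r=r(t)\to 0$ with $\phi_d(t)\ll r$ and $r^{1-\delta}\leq R$. For each pair $(x,A)$ with $\diam(n(t)^{-1}A)\ll r$, Proposition~\ref{p:NSuccessfulProb} applied with $\phi=\phi_d(t)$ and $E=E(A)$, together with the lower bound $N(x,t,r,R)\geq(1-\delta)N_d(t,r,R)$ from Proposition~\ref{p:ExcursionNumbers}, gives
\begin{equation*}
\P_{x_0}\bigl((x+\phi_d(t)E(A))\cap W[0,t]=\emptyset\bigr)
\leq e^{-cN_d(t,r,R)}+\exp\!\Bigl[-(1-\delta)\,\tfrac{d\log t}{(d-2)\kappa_d}\,\Capa E(A)\,(1+o(1))\Bigr],
\end{equation*}
after using $t\phi_d(t)^{d-2}=d\log t/((d-2)\kappa_d)$ and the cancellation $N_d(t,r,R)(\phi_d/r)^{d-2}=\kappa_d t\phi_d^{d-2}(1+o(1))$. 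The first term is super-polynomially small in $t$; the second, via $\Capa E(A)\geq\kappa$, is at most $t^{-(1-\delta)d\kappa/((d-2)\kappa_d)(1+o(1))}$.

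Second, I perform the union bound. The grid factor is $|G_{n(t)}|=n(t)^d\leq t^{d/(d-2)+o(1)}$ by \eqref{BoundOnCubeNumber}. For the sum over $A$, the constraint $\Capa E(A)\geq\kappa$ combined with $\Capa A\leq\kappa_d(\diam A)^{d-2}$ forces $\diam A\geq(\kappa/\kappa_d)^{1/(d-2)}n(t)\phi_d(t)$, so the minimal-capacity animals have $|A|=O((n\phi_d)^d)=o(\log t)$ by \eqref{BoundOnCubeNumber}, and thus contribute a count of at most $e^{o(\log t)}=t^{o(1)}$ via \eqref{LatticeAnimalGrowth}. Animals of larger capacity are handled by splitting into geometric bands $\Capa E(A)\in[2^k\kappa,2^{k+1}\kappa)$, $k\geq 0$; for each band the improved probability decay $t^{-(1-\delta)2^kd\kappa/((d-2)\kappa_d)}$ offsets the growth in the count, so the total is geometric in $k$ and dominated by the $k=0$ contribution. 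Combining and letting $\delta\downarrow 0$,
\begin{equation*}
\P(\text{event})\leq t^{d/(d-2)+o(1)}\cdot t^{o(1)}\cdot t^{-d\kappa/((d-2)\kappa_d)(1+o(1))}=t^{J_d(\kappa)+o(1)}.
\end{equation*}

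The main obstacle is the combinatorial control of lattice animals in the higher capacity bands: since \eqref{LatticeAnimalGrowth} grows exponentially in $|A|$, closing the geometric-sum estimate requires a lower bound on $\Capa A$ in terms of $|A|$ (via the Poincar\'e--Faber--Szeg\H{o} isoperimetric inequality) or $\diam A$ that is sharp enough to outweigh this combinatorial growth. This is most delicate in $d=3$, where thin connected sets acquire logarithmic corrections in their capacity, and may require restricting the maximal admissible diameter of $A$ jointly with the choice of $r$ to keep the hypothesis $\diam(n^{-1}A)\ll r$ of Proposition~\ref{p:NSuccessfulProb} in force across all bands.
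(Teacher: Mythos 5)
Your high-level strategy (union bound over pairs $(x,A)$, the excursion estimates of Propositions~\ref{p:ExcursionNumbers}--\ref{p:NSuccessfulProb}, the count \eqref{LatticeAnimalGrowth}, and the cancellation $N_d(t,r,R)(\phi_d(t)/r)^{d-2}\sim\kappa_d\,t\,\phi_d(t)^{d-2}=\tfrac{d}{d-2}\log t$) coincides with the paper's. The step that does not close, and that you correctly flag as the ``main obstacle,'' is the sum over large lattice animals, and the paper's actual proof sidesteps this difficulty with a monotonicity observation that your proposal is missing.

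The paper notes that the event $\{(x+\phi_d(t)E(A))\cap W[0,t]=\emptyset\}$ is \emph{decreasing} in $A$. Hence one may restrict the union to lattice animals $A$ that are \emph{minimal} subject to $\Capa E(A)\geq\kappa$. For such an $A$, removing a single unit cube drops $\Capa E(A)$ by at most $O((n\phi_d(t))^{-(d-2)})$ (by \eqref{CapacityOfUnion}), so $\kappa\leq\Capa E(A)\leq\kappa+O((n\phi_d(t))^{-(d-2)})$; via the Poincar\'e--Faber--Szeg\H{o} inequality \eqref{CapaVolEval} this bounds $\Vol E(A)$ and hence the number of cubes $Q=O((n\phi_d(t))^d)=o(\log t)$ uniformly. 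The count $|\cA^\boxempty_Q|=e^{O(Q)}=t^{o(1)}$ then suffices, and no sum over sizes is needed.

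Your geometric-band substitute does not close as stated. In band $k$, isoperimetry gives $|A|\lesssim(2^k\kappa(n\phi_d)^{d-2})^{d/(d-2)}\asymp 2^{kd/(d-2)}(n\phi_d)^d$, so the count grows like $\exp[C\,2^{kd/(d-2)}(n\phi_d)^d]$ while the probability decay is only $\exp[-c\,2^k\log t]$. Since $d/(d-2)>1$, the exponent in the count eventually outpaces the decay: the crossover occurs around $k\asymp\tfrac{d-2}{2}\log_2\bigl(\log t/(n\phi_d)^d\bigr)\to\infty$, but for $k$ beyond that the band contributions blow up. Moreover, for such large $A$ the hypothesis $\phi_d(t)E(A)\subset B(0,\epsilon)$ with $r/\epsilon\to\infty$ of Proposition~\ref{p:NSuccessfulProb} fails, so the per-animal probability bound is no longer available. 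You'd need to cap $\diam A$ and handle the tail by a separate argument; the monotonicity trick renders all of this unnecessary. (As a minor point, the paper takes $R=\phi_d(t)^{1-2\delta}\to0$ rather than a constant; your constant-$R$ choice is still compatible with Proposition~\ref{p:ExcursionNumbers} and Lemma~\ref{l:CapacityAndHittingDistant}, so that deviation is harmless.)
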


\refprop{HitLatticeAnimal} gives an upper bound on the probability of finding unhit sets 
of large capacity, simultaneously over all sets of the form $E(A)$, $A\in\cA^\boxempty$. 
Note that $x+\phi_d(t) E(A)$ is a finite union of cubes of side length $1/n(t)$ centred at points 
of $G_{n(t)}$.  In \refsect{ProofTheorems} we will use $x+\phi_d(t) E(A)$ as a lattice approximation 
to a generic set $x+\phi_d(t) E$. The fineness of this lattice approximation is determined by 
the relation between the lengths $1/n(t)$ and $\phi_d(t)$. The hypothesis in \eqref{BoundOnCubeNumber} 
means that the lattice scale $1/n(t)$ is a factor of order $o((\log t)^{1/d})$ smaller compared 
to the scale $\phi_d(t)$. This order is chosen so that the number of lattice animals does not 
grow too quickly.

Before proving \refprop{HitLatticeAnimal}, we give some definitions and make some remarks 
that we will use throughout \refsect{LatticeAnimals}. We abbreviate
\begin{equation}
\label{phint}
\phi=\phi_d(t), \qquad n=n(t), \qquad E(A)=n^{-1}\phi^{-1} A.
\end{equation}
For $x\in\T^d$, we introduce the nested balls $B(x,r)$ and $B(x,R)$, where
\begin{equation}
\label{rRDefinition}
r=\phi^{1-\delta}, \qquad R=\phi^{1-2\delta},
\end{equation}
and $\delta\in(0,\frac{1}{2})$ is fixed. We have $\phi\ll r\ll R\to 0$ as $t\to\infty$, and 
we will always take $t$ large enough so that $\phi<1$ and $R<\frac{1}{2}$.

Suppose $\kappa\in(0,\infty)$ is given and consider the collection of lattice animals $A\in\cA^\boxempty$ such that $\Capa E(A)\leq \kappa$. By \eqref{CapaVolEval}, 
it follows that $\Vol E(A)$ is uniformly bounded. Consequently, we may assume that such a lattice animal $A$ consists of 
at most $Q=Q(t)$ unit cubes, where $Q$ is suitably chosen with
\begin{equation}
\label{QBound}
Q=O(n^d\phi^d).
\end{equation}
Suppose, instead, that $A\in\cA^\boxempty$ is minimal subject to the condition $\Capa E(A)
\geq\kappa$, and suppose that $n\phi\to\infty$. By \eqref{CapacityOfUnion}, upon removing 
a single unit cube from $A$ the capacity $\Capa E(A)$ decreases by at most $O(1/n^{d-2}\phi^{d-2})$, 
and so it follows that $\kappa\leq\Capa E(A)\leq \kappa + O(1/n^{d-2}\phi^{d-2})$. In particular, 
$\Capa E(A)$ is uniformly bounded for $t$ sufficiently large, and we may again assume \eqref{QBound}.  

In what follows, we will always work in a context where one of these two assumptions applies.  We will therefore always assume that $A$ consists of at most $Q$ cubes, where $Q$ satisfies \eqref{QBound}.

Given $x\in G_n$ and $A\in\cA^\boxempty$, the translate $x+\phi E(A)$ can be written 
as $x'+\phi E(A')$, where $x'\in G_n$ and $0\in A'$. By the above, we have $A'\in
\cA^\boxempty_Q$. Since $A'$ is connected and $0\in A'$, it follows that $\phi E(A')\subset 
B(0,\phi Q\sqrt{d})$. If $Q=t^{o(1)}$ (in particular, if \eqref{BoundOnCubeNumber} is assumed, 
or the weaker hypothesis in \eqref{WeakBoundOnCubeNumber}), then $r/\phi Q\to\infty$ as 
$t\to\infty$. We may therefore always take $t$ large enough so that $B(0,\phi Q\sqrt{d})
\subset B(0,r)$, and we may apply \refprop{NSuccessfulProb} to $\phi E(A)$, uniformly over 
$A\in \cA_Q^\boxempty$.

\begin{proof}
Note that if we replace $n$ by a suitable 
multiple $kn=k(t)n(t)$ for $k(t)\in\N$, we can only increase the probability in 
\eqref{HitLatticeAnimalProb}.  Thus it is no loss of generality to assume that $n\phi\to\infty$. 

The event that $W$ hits $x+\phi E(A)$ is decreasing in $A$. Therefore we may restrict our 
attention to lattice animals $A$ that are minimal subject to $\Capa E(A)\geq\kappa$. By 
the remarks above, we may assume that $A\in\cA^\boxempty_Q$. Combining \eqref{BoundOnCubeNumber} 
and \eqref{QBound}, we have $Q=o(\log t)$.  

Set $N=(1-\delta)N_d(t,r,R)$.  Recalling \eqref{NdDefinition} and \eqref{rRDefinition}, we 
have $N_d(t,r,R)= t^{\delta+o(1)}$ as $t\to\infty$. If the event in \eqref{HitLatticeAnimalProb} 
occurs, then there must exist a point $x\in G_n$ with $N(x,t,r,R) < N$ or a pair $(x,A)\in 
G_n\times \cA^\boxempty_Q$ such that $\Capa E(A)\geq \kappa$ and $(x,E(A))$ is $(\floor{N},
\phi,r,R)$-successful. Write $\tilde{\chi}^\boxempty$ for the number of such pairs. Then
\begin{align}
&\P_{x_0} \! \left( \exists x\in G_n, A\in\cA^\boxempty\colon\,\Capa E(A)
\geq \kappa, (x+\phi E(A))\intersect W[0,t]=\emptyset \right)\notag\\
&\quad\leq \abs{G_n} \max_{x\in G_n} \P_{x_0}(N(x,t,r,R)<N) 
+ \P_{x_0}(\tilde{\chi}^\boxempty\geq 1)\notag\\
&\quad\leq t^{d/(d-2)+o(1)} e^{-c t^{\delta+o(1)}} + \P_{x_0}(\tilde{\chi}^\boxempty\geq 1)
\end{align}
by \refprop{ExcursionNumbers}.  The first term in the right-hand side is negligible. For the 
second term, $Q=o(\log t)$ implies that $|\cA_Q^\boxempty|\leq e^{O(Q)}=t^{o(1)}$ by 
\eqref{LatticeAnimalGrowth}, and so \refprop{NSuccessfulProb} gives
\begin{align}
\E(\tilde{\chi}^\boxempty)
&\leq \abs{G_n}\abs{\cA_Q^\boxempty} \max_{x\in G_n, A\in\cA_Q^\boxempty}
\P_{x_0}((x,E(A))\text{ is $(\floor{N} \! ,\phi,r,R)$-successful})\notag\\
&\leq (t^{d/(d-2) +o(1)}) (t^{o(1)}) (t^{-d\kappa/[(d-2)\kappa_d]+O(\delta)})
\leq t^{-d(\kappa/\kappa_d-1)/(d-2)+O(\delta)},
\label{ExpectedSuccessfulAnimals}
\end{align}
and the Markov inequality completes the proof.
\qed\end{proof}

\refprop{HitLatticeAnimal} bounds the probability that a single rescaled lattice animal 
$x+\phi_d(t)E(A)$ is not hit. We will also need the following bounds, for finite unions 
of lattice animals that are relatively close, and for pairs of lattice animals that are 
relatively distant.

\begin{lemma}
\lblemma{HitFiniteUnionOfLAs}
Assume \eqref{BoundOnCubeNumber}. Fix a capacity $\kappa\geq\kappa_d$, a positive integer 
$k\in\N$ and a positive function $t\mapsto h(t)>0$ satisfying 
\begin{equation}
\label{BoundOnAnimalSeparation}
\lim_{t\to\infty}\frac{\log(h(t)/\phi_d(t))}{\log t}=0.
\end{equation}
Then the probability that there exist a point $x\in G_{n(t)}$ and lattice animals $A^{(1)},
\dotsc,A^{(k)}\in\cA^\boxempty$, such that the union $E=\union_{j=1}^k E(A^{(j)})$ satisfies 
$\Capa E\geq\kappa$, $\phi_d(t)E\subset B(0,h(t))$, and $(x+\phi_d(t)E) \intersect W[0,t]
=\emptyset$, is at most $t^{-I_d(\kappa)+o(1)}$.
\end{lemma}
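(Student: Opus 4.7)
The plan is to follow the strategy of \refprop{HitLatticeAnimal}, replacing a single lattice animal by a fixed number $k$ of animals. Abbreviate $\phi=\phi_d(t)$, $n=n(t)$, take $r=\phi^{1-\delta}$ and $R=\phi^{1-2\delta}$ as in \eqref{rRDefinition}, and set $N=\floor{(1-\delta)N_d(t,r,R)}$. First I would reduce to \emph{minimal} tuples: if $(A^{(1)},\dotsc,A^{(k)})$ realises the event in question, then any tuple obtained by removing unit cubes, while keeping each animal connected and non-empty, still realises the non-intersection event, so it is enough to consider tuples with $\Capa E=\kappa+o(1)$. By the capacity union bound \eqref{CapacityOfUnion} applied cube by cube, a unit cube of side length $1/(n\phi)$ changes $\Capa E$ by at most $\kappa_d/(n\phi)^{d-2}$, so minimality forces $\Capa E\leq\kappa+O(1/(n\phi)^{d-2})$. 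The Poincar\'e--Faber--Szeg\"o inequality in \eqref{CapaVolEval} then gives $\Vol E=O(1)$, so the union $\bigcup_j A^{(j)}$ contains at most $Q^*=O((n\phi)^d)=o(\log t)$ unit cubes by \eqref{BoundOnCubeNumber}, and a fortiori each individual animal does.

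Next I would run a first-moment calculation. Enumerate tuples by choosing $x\in G_n$ and, for each $A^{(j)}$, the position of a distinguished cube in $\Z^d\cap B(0,n(t)h(t))$ together with the shape of the animal. Hypothesis \eqref{BoundOnAnimalSeparation} gives $n(t)h(t)=(n\phi)(h(t)/\phi)=t^{o(1)}$, so there are $t^{o(1)}$ positions per animal; and \eqref{LatticeAnimalGrowth} yields at most $\shortabs{\cA^\boxempty_{Q^*}}\leq e^{CQ^*}=t^{o(1)}$ shapes. Since $k$ is fixed, the total number of tuples is at most $\shortabs{G_n}\cdot t^{o(1)}=t^{d/(d-2)+o(1)}$. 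For each tuple, \refprop{ExcursionNumbers} bounds the probability of $\set{N(x,t,r,R)<N}$ by a stretched-exponentially small quantity that is negligible; on the complementary event, \refprop{NSuccessfulProb} applied to the entire union $\phi E\subset B(0,h(t))$ gives
\begin{equation*}
\P_{x_0}\bigl((x+\phi E)\cap W[0,t]=\emptyset\bigr)
\leq t^{-(1-\delta)\frac{d}{d-2}\Capa E/\kappa_d+o(1)}
\leq t^{-(1-\delta)\frac{d}{d-2}\kappa/\kappa_d+o(1)}.
\end{equation*}
Combining the tuple count with Markov's inequality then produces a bound of $t^{\frac{d}{d-2}(1-(1-\delta)\kappa/\kappa_d)+o(1)}$. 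Taking the $\limsup$ in $t$ and then letting $\delta\downarrow 0$ yields $t^{J_d(\kappa)+o(1)}=t^{-I_d(\kappa)+o(1)}$, using $\kappa\geq\kappa_d$.

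The main obstacle is justifying a \emph{single} application of \refprop{NSuccessfulProb} to the whole union $\phi E$ rather than $k$ separate applications to the components: capacity is strictly subadditive, so $k$ separate applications would yield the suboptimal exponent $-\frac{d}{d-2}\sum_j\Capa E(A^{(j)})/\kappa_d$ instead of $-\frac{d}{d-2}\Capa E/\kappa_d$. The single application requires $\phi E\subset B(0,\epsilon)$ with $r/\epsilon, R/r\to\infty$; these both follow from $r/h(t)=\phi^{-\delta}/t^{o(1)}\to\infty$ and $R/r=\phi^{-\delta}\to\infty$ polynomially in $t$. A secondary point is that the bound $Q^*=o(\log t)$ must be obtained via the capacity (and hence volume) of the union rather than per animal; this is what keeps the combinatorial count $t^{o(1)}$ outside of the unavoidable factor $\shortabs{G_n}=t^{d/(d-2)+o(1)}$, and thereby preserves the polynomial prefactor that matches the single-animal case of \refprop{HitLatticeAnimal}.
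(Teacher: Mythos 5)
Your proof is correct and takes essentially the same approach as the paper's, which states only that the proof is the same as for \refprop{HitLatticeAnimal} apart from two modifications. You correctly identify both: that \refprop{NSuccessfulProb} must be applied once to the whole union $\phi E\subset B(0,h(t))$ (using $r/h(t)\to\infty$, which follows from $h(t)=t^{o(1)}\phi_d(t)$) rather than to each animal separately, and that the extra combinatorial factor $O\bigl((n(t)h(t))^{dk}\bigr)\shortabs{\cA_Q^\boxempty}^k$ for locating and shaping the $k$ animals is $t^{o(1)}$.
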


\begin{proof}
The proof is the same as for \refprop{HitLatticeAnimal}. Abbreviate $h=h(t)$. Since $h=t^{o(1)}
\phi$, it follows that $r/h\to\infty$ as $t\to\infty$, so that \refprop{NSuccessfulProb} 
applies to $\phi E$. Similarly, writing $A^{(j)}=y_j+\tilde{A}^{(j)}$ with $\tilde{A}^{(j)}
\in\cA_Q^\boxempty$ and $y_j\in B(0,nh)\intersect\Z^d$, we have that there are at most 
$O((nh)^{dk})|\cA_Q^\boxempty|^k$ possible choices for $A^{(1)},\dotsc,A^{(k)}$. This 
number is $t^{o(1)}$ by \eqref{BoundOnCubeNumber} and \eqref{BoundOnAnimalSeparation}, so 
that a counting argument applies as before.
\qed\end{proof}

\begin{lemma}
\lblemma{LargeDistantComponents}
Assume \eqref{BoundOnCubeNumber}. Fix a positive function $t \mapsto h(t)>0$ satisfying 
\begin{equation}
\label{BoundOnAnimalPairSeparation}
\liminf_{t\to\infty} \frac{h(t)}{\phi_d(t) \log t} > 0,
\end{equation}
and let $\kappa^{(1)},\kappa^{(2)}>\kappa_d$, $x_1\in\T^d$. Then the probability that there 
exist a point $x_2\in G_{n(t)}$ with $d(x_1,x_2)\geq h(t)$ and lattice animals $A^{(1)},A^{(2)}
\in\cA^\boxempty$ with $\Capa E(A^{(j)})\geq \kappa^{(j)}$ such that $(x_j+\phi_d(t)E(A^{(j)}))
\intersect W[0,t]=\emptyset$, $j=1,2$, is at most $t^{-[d \kappa^{(1)}/(d-2)\kappa_d]
-I_d(\kappa^{(2)})+o(1)}$.
\end{lemma}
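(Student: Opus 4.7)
I follow the structure of the proof of \refprop{HitLatticeAnimal}, working jointly on $(x_1,A^{(1)})$ and $(x_2,A^{(2)})$ with $x_1$ fixed. After the minimality reduction used there, both $A^{(j)}\in\cA_Q^\boxempty$ with $Q$ satisfying \eqref{QBound}, so $\shortabs{\cA_Q^\boxempty}^2=t^{o(1)}$. The key estimate is
\begin{equation*}
\P_{x_0}\!\bigl[(x_j+\phi_d(t)E(A^{(j)}))\intersect W[0,t]=\emptyset\text{ for }j=1,2\bigr]
\leq t^{-d(\kappa^{(1)}+\kappa^{(2)})/((d-2)\kappa_d)+o(1)},
\end{equation*}
uniformly over all choices of $x_2$ and $A^{(1)},A^{(2)}$ satisfying the hypotheses. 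Combined with the union-bound factor $\shortabs{G_{n(t)}}\leq t^{d/(d-2)+o(1)}$ and the identity $d/(d-2)-d\kappa^{(2)}/((d-2)\kappa_d)=-I_d(\kappa^{(2)})$, this yields the claimed exponent $-d\kappa^{(1)}/((d-2)\kappa_d)-I_d(\kappa^{(2)})$.

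To prove the joint estimate I split on the separation of $x_1$ and $x_2$. In \emph{Case~A}, $d(x_1,x_2)\leq 3\phi_d(t)^{1/2}$, I merge the two translates into a single set $x_1+\phi_d(t)\tilde E$ with $\tilde E=E(A^{(1)})\union(\phi_d(t)^{-1}(x_2-x_1)+E(A^{(2)}))$. Since $\phi_d(t)^{-1}d(x_1,x_2)\geq h(t)/\phi_d(t)\geq c\log t$ dominates the diameter $O((\log t)^{(d-1)/d})$ of each $E(A^{(j)})$, \reflemma{SeparatedCapacity} yields $\Capa\tilde E\geq(\kappa^{(1)}+\kappa^{(2)})(1-o(1))$. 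Choosing $r=\phi_d(t)^{1/3}$ and $R$ a small positive constant, both \refprop{NSuccessfulProb} (the diameter of $\phi_d(t)\tilde E$ is $O(\phi_d(t)^{1/2})\ll r$) and \refprop{ExcursionNumbers} apply, and the computation in the proof of \refprop{HitLatticeAnimal} yields the per-pair bound; the count $O((n(t)\phi_d(t)^{1/2})^d)=t^{d/(2(d-2))+o(1)}$ of eligible $x_2$ stays strictly below $t^{d/(d-2)+o(1)}$. In \emph{Case~B}, $d(x_1,x_2)>3\phi_d(t)^{1/2}$, the balls $B(x_1,R)$ and $B(x_2,R)$ are disjoint for $R=\phi_d(t)^{1/2}$. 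Taking $r=\phi_d(t)^{3/4}$, both propositions apply at each of $x_1$ and $x_2$. Conditioning on the $\sigma$-algebra generated by the trajectory of $W$ outside $B(x_1,R)\union B(x_2,R)$ together with the entry and exit points of the first $\floor{N}$ excursions at each centre, where $N=(1-\delta)N_d(t,r,R)$, the strong Markov property makes the excursion interiors at $x_1$ conditionally independent from those at $x_2$ (and within each centre the interiors are independent Brownian bridges). The joint avoidance probability therefore factorises, and \refprop{NSuccessfulProb} controls each factor, giving an exponent $(1-\delta)(d/((d-2)\kappa_d))(\kappa^{(1)}+\kappa^{(2)})\log t$; the per-pair bound follows after letting $\delta\downarrow 0$.

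The main technical obstacle lies in the choice of the radii. Because $d(x_1,x_2)$ may be as small as $c\phi_d(t)\log t$---only logarithmically larger than the diameter $O(\phi_d(t)(\log t)^{(d-1)/d})$ of each $\phi_d(t)E(A^{(j)})$---no single choice of $(r,R)$ simultaneously satisfies the growth conditions $r/\epsilon\to\infty$ and $R/r\to\infty$ demanded by \refprop{NSuccessfulProb}, the condition $r^{1-\delta}\leq R$ demanded by \refprop{ExcursionNumbers}, and the disjointness condition $R<d(x_1,x_2)/2$. The two-case split resolves this conflict: Case~A replaces disjointness by the asymptotic additivity of capacity from \reflemma{SeparatedCapacity}, while Case~B uses genuine disjointness to invoke conditional independence of the excursions at the two centres.
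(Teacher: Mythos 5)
Your proposal follows essentially the same strategy as the paper's proof: a two-case split on the separation $d(x_1,x_2)$, with the ``close'' case handled by merging the two rescaled lattice animals into a single set and invoking the asymptotic additivity of capacity (\reflemma{SeparatedCapacity}), and the ``far'' case handled via conditional independence of the excursions at the two disjoint centres. The per-pair bound $t^{-d(\kappa^{(1)}+\kappa^{(2)})/((d-2)\kappa_d)+o(1)}$ and the union bound over $x_2\in G_{n(t)}$ and over lattice animals then yield the claimed exponent, and your identification of the parameter conflict (no single $(r,R)$ can serve both cases when $d(x_1,x_2)$ is as small as $c\phi_d(t)\log t$, because the lower bound $r^{1-\delta}\leq R$ from \refprop{ExcursionNumbers} is polynomial in $t$ while the separation is only logarithmic in $\phi$-units) correctly pinpoints why the split is needed. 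The only differences from the paper are cosmetic: the paper uses threshold $2R$ with $R=\phi^{1-2\delta}$ rather than $3\phi^{1/2}$, and sets $\tilde r=\phi^{1-3\delta}$, $\tilde R=\phi^{1-4\delta}$ in the close case (keeping both radii on the $\phi^{1+O(\delta)}$ scale) instead of your $r=\phi^{1/3}$ with $R$ a small constant; since the crucial quantity $N_d(t,r,R)(\phi/r)^{d-2}\sim\tfrac{d}{d-2}\log t$ is insensitive to these choices within the allowed window, the outcomes agree. Two small points worth flagging for completeness: when recentring $A^{(1)}$ so that $0\in A^{(1)}$ you must shift $x_1$ by at most $\mathrm{diam}(\phi E(A^{(1)}))=o(\phi(\log t)^{(d-1)/d})=o(h(t))$, which preserves the separation up to lower order; and the conditional-independence statement in Case~B is more cleanly phrased, as the paper does, as conditioning only on the starting and ending points $(\xi'_i(x_j),\xi_i(x_j))_{i,j}$ of the first $\floor{N}$ excursions at each centre, rather than also on the full trajectory outside the two balls.
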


\begin{proof}
We resume the notation and assumptions from the proof of \refprop{HitLatticeAnimal}, this 
time taking $\delta<\tfrac{1}{4}$. Abbreviate $h=h(t)$.

For $x_2\in G_n$ such that $d(x_1,x_2)\geq 2R$, the events of $(x_j,E(A_j))$ being 
$(\floor{N},\phi,r,R)$-successful, $j=1,2$, are conditionally independent given 
$(\xi'_i(x_j),\xi_i(x_j))_{i,j}$. The required bound for the case $d(x_1,x_2)\geq 2R$ 
therefore follows by the same argument as in the proof of 
\refprop{HitLatticeAnimal}.

For $x_2\in G_n$ such that $d(x_1,x_2)\leq 2R$, set $\tilde{r}=\phi^{1-3\delta}$, $\tilde{R}
=\phi^{1-4\delta}$ and $\tilde{N}=(1-\delta)N_d(t,\tilde{r},\tilde{R})$. We have $\phi E(A_j)
\subset B(0,\phi Q\sqrt{d})$ for $j=1,2$, with $Q=o(\log t)$ (without loss of generality, as 
in the proof of \refprop{HitLatticeAnimal}). Write $x_2=x_1+\phi y$, where $y\in\R^d$ with 
$h/\phi \leq d(0,y) \leq 2R/\phi$. The hypothesis \eqref{BoundOnAnimalPairSeparation} implies 
that $h/\phi Q\to\infty$. Hence we can apply \reflemma{SeparatedCapacity} (with $\epsilon
=\phi Q\sqrt{d}$ and $h$ playing the role of $r$), to conclude that 
\begin{equation}
\label{CapacityTwoAnimals}
\Capa\big( E(A_1)\union (y+E(A_2)) \big) = \big(\Capa E(A_1)+\Capa E(A_2)\big)[1-o(1)].
\end{equation}
We also have $E(A_1) \union (y+E(A_2))\subset B(0,2R+\phi Q\sqrt{d})$ with $\tilde{r}/R,\tilde{r}
/\phi Q\to\infty$. In particular, $x_1+\phi(E(A_1)\union (y+E(A_2)))\subset B(x_1,\tilde{r})$ 
for $t$ large enough. As in the proof of \refprop{HitLatticeAnimal}, $(x_j+\phi E(A_j))
\intersect W[0,t]=\emptyset$ implies that $N(x_1,t,\tilde{r},\tilde{R})<N$ or $(x_1,E(A_1)
\union(y+E(A_2)))$ is $(\floor{\smash{\tilde{N}}\big.},\phi,\tilde{r},\tilde{R})$-successful. 
By \eqref{CapacityTwoAnimals} and \refprop{NSuccessfulProb},
\begin{align}
&
\P_{x_0} \! \left( \big(x_1,E(A_1)\union(y+E(A_2))\big)\text{ is 
$(\floor{\smash{\tilde{N}}\big.},\phi,\tilde{r},\tilde{R})$-successful} \right)
\\&\quad
\leq \exp\left[ -\tilde{N}(\phi/\tilde{r})^{d-2}
(\kappa^{(1)}+\kappa^{(2)}-o(1))/\kappa_d \right],
\end{align}
and the rest of the proof is the same as for \refprop{HitLatticeAnimal}.
\qed\end{proof}


\subsection{Small lattice animals}
\lbsubsect{HitSmLaAn}

The bound in \refprop{HitLatticeAnimal} is only meaningful when $\kappa>\kappa_d$. For 
$\kappa<\kappa_d$, there are likely to be many unhit sets of capacity $\kappa$, and the two propositions that follow will quantify this statement.  

For $E\subset\R^d$, write $\chi(t,n(t),E)$ 
for the number of points $x\in G_{n(t)}$ such that $(x+\phi_d(t) E)\intersect W[0,t]=\emptyset$, 
and write $\chi^{\rm disjoint}(t,n(t),E)$ for the maximal number of disjoint translates 
$x+\phi_d(t) E$ such that $x\in G_{n(t)}$ and $(x+\phi_d(t) E)\intersect W[0,t]=\emptyset$. 
For $\kappa>0$, define
\begin{equation}
\begin{aligned}
\chi_+^\boxempty(t,n(t),\kappa)
&= \sum_{\substack{A\in\cA^\boxempty\colon\, 0\in A, \\ 
\Capa E(A)\geq\kappa}} \chi(t,n(t),E(A)),\\
\chi_-^\boxempty(t,n(t),\kappa)
&= \min_{\substack{A\in\cA^\boxempty\colon\, \\ 
\Capa E(A)\leq\kappa}} \chi^{\rm disjoint}(t,n(t),E(A)).
\end{aligned}
\end{equation}

\begin{proposition}
\lbprop{AnimalCounts}
Fix an integer-valued function $t \mapsto n(t)$ satisfying condition \eqref{BoundOnCubeNumber} 
such that $\lim_{t\to\infty} n(t)\phi_d(t)=\infty$. Then, for $0<\kappa<\kappa_d$,
\begin{equation}
\lim_{t\to\infty} \frac{\log\chi_+^\boxempty(t,n(t),\kappa)}{\log t} = J_d(\kappa), 
\quad
\lim_{t\to\infty} \frac{\log\chi_-^\boxempty(t,n(t),\kappa)}{\log t} = J_d(\kappa),
\quad \mbox{in $\P_{x_0}$-probability}.
\end{equation}
\end{proposition}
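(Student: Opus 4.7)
\textbf{Plan for Proposition \ref{p:AnimalCounts}.} The proposition asserts two-sided bounds $\log\chi_\pm^\boxempty(t,n(t),\kappa)/\log t\to J_d(\kappa)$ in $\P_{x_0}$-probability, so I must prove four inequalities. The upper bounds will come from first-moment (Markov) estimates essentially identical to those in \refprop{HitLatticeAnimal}; the lower bounds will come from counting unhit translates on a suitable subgrid and establishing concentration. Throughout I keep the setup of \refsect{LatticeAnimals}: $r=\phi_d(t)^{1-\delta}$, $R=\phi_d(t)^{1-2\delta}$ for small $\delta>0$, $N=(1\pm\delta)N_d(t,r,R)$, and restrict to $A\in\cA_Q^\boxempty$ with $Q=O((n(t)\phi_d(t))^d)=o(\log t)$ (automatic from the isoperimetric bound \eqref{CapaVolEval} once $\Capa E(A)$ is controlled), so that Propositions \ref{p:ExcursionNumbers} and \ref{p:NSuccessfulProb} apply uniformly.

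For the upper bound on $\chi_+^\boxempty$, the computation in the proof of \refprop{HitLatticeAnimal} gives $\E_{x_0}[\chi_+^\boxempty]\leq\abs{G_{n(t)}}\,\abs{\cA_Q^\boxempty}\,t^{-d\kappa/[(d-2)\kappa_d]+O(\delta)}=t^{J_d(\kappa)+O(\delta)}$, so Markov and $\delta\downarrow 0$ conclude. For the upper bound on $\chi_-^\boxempty$, I pick a single $A^{*}\in\cA_Q^\boxempty$ such that $E(A^{*})$ is a lattice approximation from below of a ball of radius approaching $(\kappa/\kappa_d)^{1/(d-2)}$, so that $\Capa E(A^{*})\to\kappa^{-}$ (by \refprop{CapacityContinuity}\refitem{CapacityCtyPoint}), and apply the same first-moment bound to $\chi_-^\boxempty\leq\chi^{\rm disjoint}(t,n(t),E(A^{*}))\leq\chi(t,n(t),E(A^{*}))$. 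For the lower bound on $\chi_+^\boxempty$, a single $A^{*}$ with $\Capa E(A^{*})\to\kappa^{+}$ yields $\chi_+^\boxempty\geq\chi(t,n(t),E(A^{*}))$, and it suffices to show this count is at least $t^{J_d(\kappa)-o(1)}$ with high probability. The lower bound on $\chi_-^\boxempty$ demands the same bound for $\chi^{\rm disjoint}(t,n(t),E(A))$ \emph{uniformly} in $A\in\cA_Q^\boxempty$ with $\Capa E(A)\leq\kappa$.

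For both lower bounds I work on a subgrid $\tilde H_{n(t)}\subset G_{n(t)}$ with spacing at least $2R$: the balls $\{B(x,R)\}_{x\in\tilde H_{n(t)}}$ are then pairwise disjoint, and because $2R\gg\diam\phi_d(t)E(A)=O(\phi_d(t)Q)=t^{o(1)}\phi_d(t)$ for every admissible $A$, translates at distinct subgrid points are automatically disjoint. By \refprop{NSuccessfulProb} the expected number of unhit translates on $\tilde H_{n(t)}$ is $\abs{\tilde H_{n(t)}}\cdot t^{-d\Capa E(A)/[(d-2)\kappa_d]+o(1)}\geq t^{J_d(\kappa)-O(\delta)+o(1)}$, using $\abs{\tilde H_{n(t)}}\asymp R^{-d}=t^{d(1-2\delta)/(d-2)+o(1)}$ and $\Capa E(A)\leq\kappa$. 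The main obstacle is concentration of this count, tight enough that a union bound over the $\abs{\cA_Q^\boxempty}=e^{O(Q)}=t^{o(1)}$ admissible animals is benign. The key observation is that, because the balls $\{B(x,R)\}_{x\in\tilde H_{n(t)}}$ are pairwise disjoint, the excursions at different subgrid points are carried by disjoint time intervals, so that conditional on the excursion numbers $N(x,t,r,R)$ and endpoint pairs $(\xi'_i(x),\xi_i(x))_i$ at every $x\in\tilde H_{n(t)}$, the successful events are \emph{independent} with conditional probabilities identified by \refprop{NSuccessfulProb}. A standard Chernoff bound then yields concentration of the count around its conditional mean with failure probability $\exp[-c\,t^{J_d(\kappa)-O(\delta)}]$; combined with the stretched-exponential atypicality bound from \refprop{ExcursionNumbers} and a union bound over $A$, and upon sending $\delta\downarrow 0$, this recovers the exponent $J_d(\kappa)$.
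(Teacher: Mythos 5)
Your proposal is correct and follows essentially the same route as the paper: first-moment (Markov) upper bounds via \refprop{ExcursionNumbers} and \refprop{NSuccessfulProb}, and lower bounds on a $2R$-separated subgrid using conditional independence of excursions in disjoint balls plus a Binomial Chernoff bound, with a benign union bound over the $t^{o(1)}$ admissible lattice animals. The paper obtains the upper bound on $\chi_-^\boxempty$ by the cleaner monotonicity observation $\chi_-^\boxempty(t,n,\kappa')\leq\chi_+^\boxempty(t,n,\kappa)$ for $\kappa<\kappa'$ (using $n\phi\to\infty$ to exhibit an $A$ with $\kappa\leq\Capa E(A)\leq\kappa'$), rather than constructing a specific approximating $A^*$, and it also records the small correction (the possible $-1$) when the Brownian motion starts inside one of the subgrid balls before any excursion has begun — a detail you can keep in mind but which does not affect the exponent.
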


\begin{proposition}
\lbprop{HitManyLatticeAnimals}
Fix an integer-valued function $t \mapsto n(t)$ and a non-negative function $t \mapsto h(t)$ satisfying 
\begin{equation}
\label{WeakBoundOnCubeNumber}
\lim_{t\to\infty} \frac{\log [n(t)\phi_d(t)]}{\log t} = 0, \qquad 
\lim_{t\to\infty} \frac{\log[h(t)/\phi_d(t)]}{\log t} \leq 0,
\end{equation} 
and collections of points $(S(t))_{t>1}$ in $\T^d$ such that $\max_{x\in\T^d} d(x,S(t)) 
\leq h(t)$ for all $t>1$. Given $A\in\cA^\boxempty$, write $E(A)=n(t)^{-1}\phi_d(t)^{-1}A$. Then, for each $\kappa
\in(0,\kappa_d)$,
\begin{equation}
\P_{x_0} \! \left( \exists A\in\cA^\boxempty\colon\,\Capa E(A)\leq \kappa \text{ and }
(x+\phi_d(t)E(A))\intersect W[0,t] \neq \emptyset \; \forall x\in S(t) \right)
\leq \exp\left[ -t^{J_d(\kappa)-o(1)} \right] \! .
\end{equation}
\end{proposition}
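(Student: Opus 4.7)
The event to bound is that some lattice animal $A$ with $\Capa E(A) \leq \kappa < \kappa_d$ has every translate $x + \phi_d(t) E(A)$, $x \in S(t)$, hit by $W[0,t]$. The Poisson-point-process heuristic of \refsubsubsect{UpDownDiscussion} predicts that the number of \emph{unhit} translates is close to a Poisson variable of mean $\asymp t^{J_d(\kappa)+o(1)}$, so its probability of being zero should decay like $\exp[-t^{J_d(\kappa)+o(1)}]$. I would try to realise this through: (i) a union bound over $A$ to reduce to a single animal; (ii) extraction of a well-separated sub-grid $\tilde S(t)\subset S(t)$; (iii) conditional independence from the excursion framework of \refsect{Preparations}; and (iv) a Chernoff-type exponential-moment bound.

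\textbf{Reduction and conditional independence.} Abbreviate $\phi = \phi_d(t)$, $n = n(t)$. By the Poincar\'e--Faber--Szeg\"o inequality \eqref{CapaVolEval} combined with $\Capa E(A) \leq \kappa$, $A$ consists of $Q = O((n\phi)^d) = t^{o(1)}$ unit cubes (using \eqref{WeakBoundOnCubeNumber}); the total number of candidate animals (shapes and torus positions) is thus at most $\exp[t^{o(1)}]\cdot t^{O(1)}$, absorbed into $-o(1)$ in the exponent since $J_d(\kappa)>0$. Fix $A \in \cA^\boxempty_Q$ containing the origin, take $r = \phi^{1-\delta}$, $R = \phi^{1-2\delta}$ and $N = \lfloor(1-\delta)N_d(t,r,R)\rfloor$ as in \refsect{LatticeAnimals}, and extract $\tilde S(t)\subset S(t)$ greedily so that the balls $B(x,R)$, $x \in \tilde S(t)$, are pairwise disjoint. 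The covering hypothesis on $S(t)$ together with $h(t)\ll R$ (forced by \eqref{WeakBoundOnCubeNumber}) yields $|\tilde S(t)|\geq cR^{-d}=t^{d(1-2\delta)/(d-2)-o(1)}$, and $\phi E(A)\subset B(0,r)$ for $t$ large. Applying \eqref{NDownwardBound} of \refprop{ExcursionNumbers} with a union bound over $\tilde S(t)$, outside an event of probability at most $\exp[-t^{\delta-o(1)}]$ one has $N(x,t,r,R)\geq N$ for every $x\in\tilde S(t)$ simultaneously; on that event, the hit $(x+\phi E(A))\cap W[0,t]\neq\emptyset$ forces $(x,E(A))$ to fail to be $(N,\phi,r,R)$-successful. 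Disjointness of the balls $B(x,R)$ and the strong Markov property make these success events \emph{conditionally independent} across $x\in\tilde S(t)$ given the joint family of excursion endpoints $(\xi'_i(x),\xi_i(x))_{i,x}$, each with conditional probability at least $p = t^{-(d/(d-2))(\kappa/\kappa_d)(1-\delta)(1+o(1))}$ by \refprop{NSuccessfulProb}. Hence
\[
\P_{x_0}\bigl(\forall x\in S(t)\colon(x+\phi E(A))\cap W[0,t]\neq\emptyset\bigr)
\leq \exp[-t^{\delta-o(1)}] + \exp\bigl[-|\tilde S(t)|\,p\bigr],
\]
with $|\tilde S(t)|\,p \geq t^{J_d(\kappa)-C\delta-o(1)}$ for a constant $C=C(d,\kappa)>0$.

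\textbf{Main obstacle.} The clean quantitative step above yields, for each fixed $\delta>0$ and $t$ large, a bound of shape $\exp[-t^{J_d(\kappa)-C\delta}]$, but \emph{not} the sharp $\exp[-t^{J_d(\kappa)-o(1)}]$: sending $\delta\downarrow 0$ makes the excursion-failure term $\exp[-t^{\delta-o(1)}]$ the dominant obstruction, because it is only superpolynomial rather than stretched-exponential of order $t^{J_d(\kappa)}$. Closing this constant-$\delta$ defect is the technical heart of the argument and, I expect, requires either a multi-scale / iterated use of \refprop{ExcursionNumbers}---controlling excursion numbers at a hierarchy of radii between $\phi$ and a macroscopic scale so that the concentration of time spent near each $x\in\tilde S(t)$ is sharpened---or a quantitative Chen--Stein Poisson approximation for the count $\sum_{x\in\tilde S(t)}\indicator{(x+\phi E(A))\cap W[0,t]=\emptyset}$, whose higher moments can be estimated using the asymptotic additivity of capacity for well-separated sets supplied by \reflemma{SeparatedCapacity}.
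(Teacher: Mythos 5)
Your framework (union bound over a polynomial number of animals; extraction of a well-separated subcollection of reference points; conditional independence via excursions; Chernoff-type product bound) is the right skeleton, and you have correctly put your finger on the decisive obstacle: a union bound of the per-point excursion concentration in \refprop{ExcursionNumbers} costs $\exp[-t^{\delta-o(1)}]$, which is not $o$ of the target $\exp[-t^{J_d(\kappa)}]$ and which explodes as $\delta\downarrow 0$. However, you do not close this gap, and neither of your two suggested remedies is what the paper does. There is also a direction error in the intermediate step: you bound $\P(N(x,t,r,R)<N)$ via \eqref{NDownwardBound} and then claim that, given $N(x,t,r,R)\geq N$, a hit of $x+\phi E(A)$ in $W[0,t]$ forces failure of $(N,\phi,r,R)$-success. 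This implication is false: the hit could occur during an excursion of index $>N$, or during the incomplete final excursion. What is actually needed is an \emph{upper} bound $N(x,t,r,R)\leq M$ (so the hit is confined to the first $M+1$ excursion segments), and the relevant input from \refprop{ExcursionNumbers} is \eqref{NUpwardBound}/\eqref{NprimeUpwardBound}, not \eqref{NDownwardBound}.

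The paper closes the main gap by turning the simultaneous control of the excursion counts $N''_j$ at all grid points into a \emph{single} application of \refprop{ExcursionNumbers}, via a projection-and-rescaling trick rather than a union bound. Concretely: (i) the time horizon is replaced by the stopping time $T''\geq t$ defined so that the total time \emph{not} spent making excursions from any $\partial B(\tilde x_j,r)$ reaches $t$; at $T''$ the path is outside every $B(\tilde x_j,r)$, so the counts $N''_j$ of excursions by time $T''$ capture every excursion in $[0,t]$, avoiding the bookkeeping issue mentioned above. (ii) Conditional on $(N''_j)_j$ and the excursion endpoints, the hitting probability factorizes; the concavity of $y\mapsto\log(1-e^{cy})$ is used (Jensen) to replace each $N''_j$ by the empirical mean $\bar N''=\tilde n^{-d}\sum_j N''_j$. (iii) The quantity $\tilde n^d\bar N''$ is exactly the excursion count $N'(\cdot,\cdot,\cdot,\cdot)$ for the image of $W$ under the projection to a torus of side length $1/\tilde n$ (all grid points coincide there), which after Brownian rescaling is $N'(\cdot,\tilde n^2 t,\tilde n r,\tilde n R)$ for a standard Brownian motion on $\T^d$. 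A \emph{single} application of \eqref{NprimeUpwardBound}, using $N_d(\tilde n^2 t,\tilde n r,\tilde n R)=\tilde n^d N_d(t,r,R)$, then gives
\begin{equation*}
\P_{x_0}\bigl(\bar N''\geq(1+\delta)N_d(t,r,R)\bigr)\leq\exp\bigl[-ct^{d/(d-2)-O(\delta)}\bigr],
\end{equation*}
which, since $d/(d-2)>J_d(\kappa)$ for all $\kappa\in(0,\kappa_d)$, is negligible against $\exp[-t^{J_d(\kappa)-O(\delta)}]$ uniformly in small $\delta$. This is the missing concentration input; once you have it, your Chernoff estimate on the remaining event $\{\bar N''\leq(1+\delta)N_d(t,r,R)\}$ goes through and the union over the $t^{o(1)}$ animals is absorbed into the exponent. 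A Chen--Stein Poisson approximation is not used and is not needed; the conditional-independence-plus-concavity structure is enough once the excursion counts are controlled in aggregate.
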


Compared to \refthm{NoTranslates}, \refprop{HitManyLatticeAnimals} requires $(x+\phi_d(t)E(A))\intersect W[0,t] \neq \emptyset$ only for $x$ in some subset $S(t)$ of the torus, subject to the requirement that $S(t)$ should be within distance $h(t)$ of every point in $\T^d$.  The reader may assume that $S(t)=\T^d$, $h(t)=0$ for simplicity.

In \refprop{HitManyLatticeAnimals}, the scale $n(t)$ of the lattice need only satisfy 
\eqref{WeakBoundOnCubeNumber} instead of the stronger condition \eqref{BoundOnCubeNumber}.
This reflects the difference in scaling between the probabilities in 
\refprop{HitManyLatticeAnimals} compared to \refprop{HitLatticeAnimal}.


\subsubsection{Proof of \refprop{AnimalCounts}}
\lbsubsubsect{AnimalCountsProof}

\begin{proof}
Let $\delta\in(0,\tfrac{1}{2})$ be given. It suffices to show that $t^{J_d(\kappa)-O(\delta)}
\leq \chi_-^\boxempty(t,n,\kappa)$ and $\chi_+^\boxempty(t,n,\kappa)\leq t^{J_d(\kappa)
+O(\delta)}$ with high probability. (Given $\kappa<\kappa'$, the assumption $n\phi\to\infty$ 
implies the existence of some $A$ with $\kappa\leq\Capa E(A)\leq\kappa'$, and therefore 
$\chi_-^\boxempty(t,n,\kappa')\leq \chi_+^\boxempty(t,n,\kappa)$.)

For the upper bound, recall $N$ and $\tilde{\chi}^\boxempty$ from the proof of 
\refprop{HitLatticeAnimal}. On the event $\{N(x,t,r,R)<N \;\forall\,x\in G_n\}$ 
(whose probability tends to $1$) we have $\chi_+^\boxempty(t,\kappa,n) \leq 
\tilde{\chi}^\boxempty$. From \eqref{ExpectedSuccessfulAnimals} it follows that 
$\tilde{\chi}^\boxempty \leq t^{J_d(\kappa)+O(\delta)}$ with high probability.

For the lower bound, let $\set{x_1,\dotsc,x_K}$ denote a maximal collection of points in 
$G_n$ satisfying $d(x_j,x_k)>2R$ for $j\neq k$, so that $K=R^{-d+o(1)}=t^{d/(d-2)-O(\delta)}$.
Write $N_-=(1+\delta)N_d(t,r,R)$. By \refprop{ExcursionNumbers}, in the same way as in the 
proof of \refprop{HitLatticeAnimal}, $N(x_j,t,r,R)\leq N_-$ for each $j=1,\dotsc,K$, with 
high probability. Moreover we may take $t$ large enough so that $\phi E(A)\subset B(0,R)$, 
so that the translates $x_j+\phi E(A)$ are disjoint.  Let $\tilde{\chi}_-^\boxempty(A)$ 
denote the number of points $x_j$, $j\in \set{1,\dotsc,K}$, such that $(x_j,E(A))$ is 
$(\ceiling{N_-}\!,\phi,r,R)$-successful. We have $\chi_-^\boxempty(t,n(t),E(A)) \geq 
\tilde{\chi}_-^\boxempty(A) - 1$ on the event $\set{N(x_j,t,r,R)\leq N_- \: \forall j}$, 
since at most one translate $x_j+\phi E(A)$ may have been hit before the start of the 
first excursion, in the case $x_0\in B(x_j,R)$. On the other hand, since the balls 
$B(x_j,R)$ are disjoint, the excursions are conditionally independent given the starting 
and ending points $(\xi'_i(x_j),\xi_i(x_j))_{i,j}$. It follows that, for each $A$ with 
$\Capa E(A)\leq\kappa$, $\tilde{\chi}_-^\boxempty(A)$ is stochastically larger than a 
Binomial$(K,p)$ random variable, where $p\geq t^{-d\kappa/(d-2)-O(\delta)}$ by 
\refprop{NSuccessfulProb}. A straightforward calculation shows that $\P(\text{Binomial}
(K,p)<\frac{1}{2}Kp)\leq e^{-cKp}$ for some $c>0$, so that 
\begin{equation}
\P_{x_0}(\tilde{\chi}_-^\boxempty(A) \leq t^{J_d(\kappa)-O(\delta)})
\leq \exp\left[ -ct^{J_d(\kappa)-O(\delta)} \right].
\end{equation}  
As in the proof of \refprop{HitLatticeAnimal}, there are at most $t^{o(1)}$ animals $A$ to 
consider, so a union bound completes the proof.
\qed\end{proof}

As with \reflemma{HitFiniteUnionOfLAs}, we may modify \refprop{AnimalCounts} to deal with 
a finite union of lattice animals.

\begin{lemma}
\lblemma{FiniteUnionOfLACounts}
Assume the hypotheses of \refprop{AnimalCounts}, let $k\in\N$, and let $t\mapsto h(t)>0$ 
be a positive function satisfying \eqref{BoundOnAnimalSeparation}. Define 
\begin{equation}
\begin{aligned}
\chi^\boxempty_+(t,n(t),\kappa,k,h(t))
&=
\sum \chi(t,n(t),E),
\\
\chi^\boxempty_-(t,n(t),\kappa,k,h(t))
&=
\min \chi^{\rm disjoint}(t,n(t),E),
\end{aligned}
\end{equation}
where the sum and minimum are over sets $E=\union_{j=1}^k E(A^{(j)})$ such that $\phi_d(t) 
E\subset B(0,h(t))$; $(x+\phi_d(t)E)\intersect W[0,t]=\emptyset$; and $\Capa E\geq\kappa$ 
(for $\chi^\boxempty_+$) or $\Capa E\leq\kappa$ (for $\chi^\boxempty_-$), respectively. 
Then $(\log \chi^\boxempty_+(t,n(t),\kappa,k,h(t)))/\log t$ and $(\log \chi^\boxempty_-(t,n(t),
\kappa,k,h(t)))/\log t$ converge in $\P_{x_0}$-probability to $J_d(\kappa)$ as $t\to\infty$.
\end{lemma}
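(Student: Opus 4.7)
The plan is to adapt the proof of \refprop{AnimalCounts} in a uniform way over unions, exploiting the fact that the additional geometric constraint $\phi_d(t) E \subset B(0, h(t))$ combined with \eqref{BoundOnAnimalSeparation} keeps the total number of admissible $E = \bigunion_{j=1}^{k} E(A^{(j)})$ to $t^{o(1)}$. Throughout I would reuse the abbreviations $\phi = \phi_d(t)$, $n = n(t)$, $r = \phi^{1-\delta}$ and $R = \phi^{1-2\delta}$ from \eqref{phint}--\eqref{rRDefinition} with a small parameter $\delta \in (0, \tfrac12)$, and write $h = h(t)$. The key numerical observation is that $n\phi = o((\log t)^{1/d})$ from \eqref{BoundOnCubeNumber} and $h/\phi = t^{o(1)}$ from \eqref{BoundOnAnimalSeparation} combine to give $nh = t^{o(1)}$ and $h \ll r$ for $t$ large, so that every admissible $E$ satisfies $\phi E \subset B(0, h) \subset B(0, r)$ and \refprop{NSuccessfulProb} applies uniformly.

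For the upper bound on $\chi_+^\boxempty$, I would first count the admissible $E$'s exactly as in the proof of \reflemma{HitFiniteUnionOfLAs}: writing $A^{(j)} = y_j + \tilde{A}^{(j)}$ with $\tilde{A}^{(j)} \in \cA^\boxempty_Q$ and $y_j \in B(0, nh) \cap \Z^d$, the number of $k$-tuples is bounded by $O((nh)^{dk}) |\cA^\boxempty_Q|^k = t^{o(1)}$. On the high-probability event $\set{N(x, t, r, R) < (1-\delta) N_d(t, r, R) \;\forall\, x \in G_n}$ from \refprop{ExcursionNumbers}, every pair $(x, E)$ contributing to $\chi_+^\boxempty$ must have $(x, E)$ being $(\floor{(1-\delta) N_d}, \phi, r, R)$-successful; \refprop{NSuccessfulProb} then bounds each success probability by $t^{-d \kappa/((d-2)\kappa_d) + O(\delta)}$ because $\Capa E \geq \kappa$. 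Markov's inequality thus yields $\chi_+^\boxempty \leq |G_n| \cdot t^{o(1)} \cdot t^{-d\kappa/((d-2)\kappa_d) + O(\delta)} = t^{J_d(\kappa) + O(\delta)}$ with high probability, and letting $\delta \downarrow 0$ gives the desired upper bound.

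For the lower bound on $\chi_-^\boxempty$, fix any admissible $E$ with $\Capa E \leq \kappa$, and select a maximal $2R$-separated subset $\set{x_1, \ldots, x_K} \subset G_n$, so that $K = t^{d/(d-2) - O(\delta)}$ and, because $h \ll R$, the translates $x_j + \phi E$ are pairwise disjoint. On the event $\set{N(x_j, t, r, R) \leq (1+\delta) N_d \;\forall\, j}$, the quantity $\chi^{\rm disjoint}(t, n, E)$ is at least the count of indices $j$ at which $(x_j, E)$ is $(\ceiling{(1+\delta) N_d}, \phi, r, R)$-successful, minus at most one (to account for a possible initial excursion when $x_0 \in B(x_j, R)$). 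Conditionally on the excursion endpoints in the disjoint balls $B(x_j, R)$ these indicators are independent, each of probability at least $t^{-d \kappa/((d-2)\kappa_d) - O(\delta)}$ by \refprop{NSuccessfulProb}, so a standard binomial tail bound gives $\chi^{\rm disjoint}(t, n, E) \geq t^{J_d(\kappa) - O(\delta)}$ with probability $1 - \exp[-c t^{J_d(\kappa) - O(\delta)}]$. A union bound over the $t^{o(1)}$ admissible $E$'s then yields the lower bound uniformly in $E$.

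The two remaining inequalities follow from the sandwich already used in the proof of \refprop{AnimalCounts}: for any $\kappa_0 < \kappa$, the condition $n\phi \to \infty$ ensures a lattice animal $A \ni 0$ of $O(1)$ cubes with $\Capa E(A) \in [\kappa_0, \kappa]$; the single $E(A)$ (or its $k$-fold repetition, trivially a union) is admissible for $t$ large, so $\chi_-^\boxempty(t, n, \kappa, k, h) \leq \chi(t, n, E(A)) \leq \chi_+^\boxempty(t, n, \kappa_0, k, h) \leq t^{J_d(\kappa_0) + o(1)}$, and letting $\kappa_0 \uparrow \kappa$ yields the upper bound on $\chi_-^\boxempty$; the lower bound on $\chi_+^\boxempty$ follows symmetrically. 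The main obstacle is precisely the uniform counting of admissible $k$-tuples of animals sharing an enclosing ball of radius $h$, and this is exactly what hypothesis \eqref{BoundOnAnimalSeparation} is tailored to handle via the estimate $(nh)^{dk} = t^{o(1)}$; once this is in hand, no new probabilistic ingredient beyond those already used in \refprop{AnimalCounts} is required.
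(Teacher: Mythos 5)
Your proposal is correct and follows exactly the route the paper intends: the paper states this lemma without an explicit proof, merely remarking that \refprop{AnimalCounts} should be modified ``as with \reflemma{HitFiniteUnionOfLAs},'' and you carry this out faithfully by importing the $O((nh)^{dk})|\cA^\boxempty_Q|^k = t^{o(1)}$ counting of admissible $k$-tuples, observing $h \ll r, R$ so that \refprop{NSuccessfulProb} and the disjointness argument still apply, and then re-running the upper-bound Markov argument, the lower-bound binomial argument with a union bound over the $t^{o(1)}$ admissible unions, and the sandwich between $\chi_+^\boxempty$ and $\chi_-^\boxempty$ at nearby capacities. No deviation from the paper's implicit argument; the filled-in details are the intended ones.
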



\subsubsection{Proof of \refprop{HitManyLatticeAnimals}}

The proof of \refprop{AnimalCounts} compares $\chi_-^\boxempty(t,n(t),\kappa)$ to a random 
variable that is approximately Binomial $(t^{d/(d-2)},t^{-d\kappa/(d-2)})$.  If this 
identification were exact, then the asymptotics in \refprop{HitManyLatticeAnimals} would 
follow in a similar way. However, the bound for each individual probability $\P_{x_0}
(N(x_j,t,r,R)\geq (1+\delta)N_d(t,r,R))$, $j=1,\dotsc,K$, although relatively small, is 
still much larger than the probability in \refprop{HitManyLatticeAnimals}. Therefore
an additional argument is needed.

\begin{proof}
Abbreviate $h=h(t),S=S(t)$.

Recall that the condition $\Capa E(A)\leq\kappa$ implies that $A$ consists of at 
most $Q$ cubes, where because of \eqref{QBound} and \eqref{WeakBoundOnCubeNumber} 
we have $Q=t^{o(1)}$. Fix such an $A$, and write $A=p+A'$, where $p\in\Z^d$ and 
$A'\in\cA^\boxempty_Q$. In particular, $E(A')\subset B(0,Q\sqrt{d})$. Since $x+\phi E(A)
=x+\tfrac{1}{n}p+\phi E(A')$, we can assume by periodicity that $p\in\set{0,\dotsc,n-1}^d$.

Let $\delta\in(0,\tfrac{1}{3})$, take $r,R$ as in \eqref{rRDefinition}, and choose $\tilde{n}
=\tilde{n}(t)\in\N$ such that $1/\tilde{n}= \phi^{1-3\delta+o(1)}$ and $1/\tilde{n}\geq 2R$. 
Let $\set{\tilde{x}_1,\dotsc,\tilde{x}_{\tilde{n}^d}}$ denote a grid of points in $\T^d$ with 
spacing $1/\tilde{n}$ (i.e., a translate of $G_{\tilde{n}}$), chosen in such a way that 
$d(x_0,\tilde{x}_j)>R$. To each grid point $\tilde{x}_j$, $j=1,\dotsc,\tilde{n}^d$, associate 
in some deterministic way a point $x_j\in S$ with $d(x_j+\tfrac{1}{n}p,\tilde{x}_j)
=d(x_j,\tilde{x}_j-\tfrac{1}{n}p)\leq h$ (this is always possible by the hypothesis on 
$S$). The choice of $\tilde{x}_j,x_j$ depends on $t$, but we suppress this dependence 
in our notation.  

Since $h/\phi\leq t^{o(1)}$, we have $r/h\geq \phi^{-\delta+o(1)}\to\infty$.  Since also 
$r/\phi Q\to\infty$, we may take $t$ large enough so that $h+\phi Q\sqrt{d}<r<R<1/
\tilde{n}$, implying that $x_j+\phi E(A)=x_j+\tfrac{1}{n}p+E(A')\subset B(\tilde{x}_j,r)$ 
for $j=1,\dotsc,\tilde{n}^d$, and so we can apply \reflemma{CapacityAndHittingDistant} to 
the sets $x_j+\phi E(A)$, uniformly in the choice of $A$ and $j$.

Let $\sigma(s)$ be the total amount of time, up to time $s$, during which the Brownian 
motion is \emph{not} making an excursion from $\boundary B(\tilde{x}_j,r)$ to $\boundary 
B(\tilde{x}_j,R)$ for any $j=1,\dotsc,\tilde{n}^d$.  In other words, $\sigma(s)$ is the 
Lebesgue measure of $[0,s] \setminus ( \union_{j=1}^{\tilde{n}^d} \union_{i=1}^\infty 
[T'_i(\tilde{x}_j),T_i(\tilde{x_j})])$. Define the stopping time $T''=\inf\set{s\colon\,
\sigma(s)\geq t}$. Clearly, $T''\geq t$. Define $N''_j$ to be the number of excursions 
from $\boundary B(\tilde{x}_j,r)$ to $\boundary B(\tilde{x}_j,R)$ by time $T''$, and write 
$(\xi'_i(\tilde{x}_j),\xi_i(\tilde{x}_j))_{i=1,\dotsc,N''_j}$ for the starting and ending 
points of these excursions.

If $(x+\phi E(A)) \intersect W[0,t]\neq\emptyset$ for each $x\in S$, then necessarily, 
for each $j=1,\dotsc,\tilde{n}^d$, at least one of the $N''_j$ excursions from $\boundary
B(\tilde{x}_j,r)$ to $\boundary B(\tilde{x}_j,R)$ must hit $x_j+\phi E(A)$. (Here we use 
that $d(x_0,\tilde{x}_j)>R$, which implies that the Brownian motion cannot hit $x_j+\phi 
E(A)$ before the start of the first excursion.) These excursions are conditionally 
independent given $(\xi'_i(\tilde{x}_j),\xi_i(\tilde{x}_j))$ for $i=1,\dotsc,N''_j, 
j=1,\dotsc,\tilde{n}^d$. Applying \reflemma{CapacityAndHittingDistant} and 
\eqref{CapacityScaling}, we get
\begin{align}
&\P_{x_0} \! \condparenthesesreversed{ (x+\phi E(A))\intersect W[0,t]\neq\emptyset \; 
\forall x\in S}{(N''_j)_j, 
(\xi'_i(\tilde{x}_j),\xi_i(\tilde{x}_j))_{i,j}}\notag\\
&\quad\leq
\P_{x_0} \! \condparenthesesreversed{ (x_j+\phi E(A))\intersect W[0,T'']\neq\emptyset \; 
\forall j }{ (N''_j)_j, (\xi'_i(\tilde{x}_j),\xi_i(\tilde{x}_j))_{i,j} }\notag\\
&\quad=
\prod_{j=1}^{\tilde{n}^d} \left( 1-\prod_{i=1}^{N''_j} 
\left( 1-\frac{\phi^{d-2}\Capa E(A)}{\kappa_d \, r^{d-2}}(1+o(1)) \right) \right)\notag\\
&\quad\leq
\exp \left[ \sum_{j=1}^{\tilde{n}^d} \log\left( 1-(1-(\phi/r)^{d-2} 
(\kappa/\kappa_d+o(1)))^{N''_j} \right) \right].
\end{align}
In this upper bound, which no longer depends on $(\xi'_i(\tilde{x}_j),\xi_i(\tilde{x}_j))_{i,j}$, 
the function $y \mapsto \log(1-e^{cy})$ is concave, and hence we can replace each $N''_j$ by 
the empirical mean $\bar{N}''=\tilde{n}^{-d} \sum_{j=1}^{\tilde{n}^d} N''_j$:
\begin{align}
&\P_{x_0} \! \condparenthesesreversed{ (x+\phi E(A))\intersect W[0,t]\neq\emptyset \; 
\forall x\in S }{ (N''_j)_j }\notag\\
&\quad\leq
\exp \left( \tilde{n}^d \log\left( 1-(1-(\phi/r)^{d-2} 
(\kappa/\kappa_d+o(1)))^{\bar{N}''} \right) \right)\notag\\
&\quad\leq
\exp \left[ -\tilde{n}^d (1-(\phi/r)^{d-2} (\kappa/\kappa_d+o(1)))^{\bar{N}''} \right].
\end{align}
Write $M=(1+\delta)N_d(t,r,R)$. On the event $\set{\bar{N}''\leq M}$, the relations 
$(\phi/r)^{d-2} M\sim (1+\delta)d(d-2)^{-1}\log t$ and $\tilde{n}^d=t^{d/(d-2)-O(\delta)}$ 
imply that
\begin{align}
&\indicator{\bar{N}''\leq M}\P_{x_0} \! \condparenthesesreversed{ (x+\phi E(A))\intersect 
W[0,t]\neq\emptyset \; \forall x\in S }{ (N''_j)_j }\notag\\
&\quad\leq
\exp\left[ -t^{d/(d-2)-O(\delta)} 
\exp\left[ -(\phi/r)^{d-2} M(\kappa/\kappa_d +o(1)) \right] \right]\notag\\
&\quad=
\exp\left[ -t^{J_d(\kappa)-O(\delta)} \right].
\label{HitAllSetsBarNSmall}
\end{align}

Next, we will show that $\P_{x_0}(\bar{N}''\geq M)\leq \exp[-ct^{d/(d-2)-O(\delta)}]$. To 
that end, let $\pi^{(\tilde{n})}$ denote the projection map from the unit torus $\T^d$ to 
a torus of side length $1/\tilde{n}$. Under $\pi^{(\tilde{n})}$, every grid point 
$\tilde{x}_j$ maps to the same point $\pi^{(\tilde{n})}(\tilde{x}_j)$, and $\sigma(s)$ is 
the total amount of time the projected Brownian motion $\pi^{(\tilde{n})}(W)$ in 
$\pi^{(\tilde{n})}(\T^d)$ spends \emph{not} making an excursion from $\boundary 
B(\pi^{(\tilde{n})}(\tilde{x}_j),r)$ to $\boundary B(\pi^{(\tilde{n})}(\tilde{x}_j),R)$, 
by time $s$. Moreover, $\tilde{n}^d \bar{N}'' = \sum_{j=1}^{\tilde{n}^d} N''_j$ can be 
interpreted as the number of such excursions in $\pi^{(\tilde{n})}(\T^d)$ completed by 
time $T''$.  

Write $x\mapsto \tilde{n}x$ for the dilation that maps the torus $\pi^{(\tilde{n})}(\T^d)$ 
of side length $1/\tilde{n}$ to the unit torus $\T^d$. By Brownian scaling, $(\tilde{W}(u)
)_{u\geq 0} = (\tilde{n}\pi^{(\tilde{n})}(W(\tilde{n}^{-2}u)))_{u\geq 0}$ has the law of a 
Brownian motion in $\T^d$. Moreover, $\tilde{n}^d\bar{N}''$ can be interpreted as the number 
of excursions of $\tilde{W}(u)$ from $\boundary B(\tilde{n}\pi^{(\tilde{n})}(\tilde{x_j}),
\tilde{n} r)$ to $\boundary B(\tilde{n}\pi^{(\tilde{n})}(\tilde{x_j}),\tilde{n} R)$ until 
the time spent not making such excursions first exceeds $\tilde{n}^2 t$, i.e., precisely 
the quantity $N'(\tilde{n}\pi^{(\tilde{n})}(\tilde{x}_j),\tilde{n}^2 t,\tilde{n}r,\tilde{n}R)$ 
from \refsubsect{Excursions}. We have $N_d(\tilde{n}^2 t,\tilde{n}r,\tilde{n}R)=\tilde{n}^d 
N_d(t,r,R)$, so \refprop{ExcursionNumbers} gives
\begin{align}
\P_{x_0}(\bar{N}''\geq M)
&= \P_{x_0}(\tilde{n}^d \bar{N}'' \geq \tilde{n}^d M)
= \P_{\tilde{n}\pi^{(\tilde{n})}(x_0)} \! \left( N'(\tilde{n}
\pi^{(\tilde{n})}(\tilde{x}_j),\tilde{n}^2 t,\tilde{n}r,\tilde{n}R) 
\geq \tilde{n}^d M \right)\notag\\
&= \P_{\tilde{n}\pi^{(\tilde{n})}(x_0)} \! \left( N'(\tilde{n}
\pi^{(\tilde{n})}(\tilde{x}_j),\tilde{n}^2 t,\tilde{n}r,\tilde{n}R) 
\geq (1+\delta)N_d(\tilde{n}^2 t,\tilde{n}r,\tilde{n}R) \right)\notag\\
&\leq \exp\left[ -c N_d(\tilde{n}^2 t,\tilde{n}r,\tilde{n}R) \right]
= \exp\left[ -c t^{d/(d-2)-O(\delta)} \right] \! .
\label{BarNNotLarge}
\end{align}

Equations \eqref{HitAllSetsBarNSmall}--\eqref{BarNNotLarge} imply that, for each fixed 
$A=p+A'$ with $\Capa E(A)\leq \kappa$, we have
\begin{equation}
\P_{x_0} \! \left( (x+\phi E(A))\intersect W[0,t]\neq \emptyset \; \forall x\in S \right) 
\leq \exp\left[ -t^{d(1-\kappa/\kappa_d)/(d-2)-O(\delta)} \right].
\end{equation}  
But the number of pairs $(p,A')$ is at most $n^d |\cA^\boxempty_Q|= t^{d/(d-2)+o(1)} 
e^{O(Q)}$, by \eqref{LatticeAnimalGrowth} and \eqref{WeakBoundOnCubeNumber}. Since 
$Q=t^{o(1)}$, a union bound completes the proof.
\qed\end{proof}


\section{Proofs of Theorems \ref{t:CapacitiesInWc}--\ref{t:ShapeOfComponents}
and Propositions \ref{p:NoWrapping}--\ref{p:Connected}}
\lbsect{ProofTheorems}

In proving Theorems~\ref{t:CapacitiesInWc}--\ref{t:ShapeOfComponents}, we bound 
non-intersection probabilities for Wiener sausages, e.g.
\begin{equation}
\P\left( \exists x\in\T^d\colon\, (x+\phi_d(t) E)\intersect W_{\rho(t)}[0,t]
=\emptyset \right), \qquad E\subset\R^d,
\end{equation}
in terms of the Brownian non-intersection probabilities estimated in Propositions~\ref{p:HitLatticeAnimal}
and \ref{p:AnimalCounts}--\ref{p:HitManyLatticeAnimals}, in which $E$ is a rescaled lattice 
animal. In \refsubsect{ApproxByLA} we prove an approximation lemma for lattice animals, which 
leads directly to the proofs of Theorems~\ref{t:CapacitiesInWc}--\ref{t:NoTranslates} and 
\refprop{NoWrapping}.  Proving \refthm{ShapeOfComponents} requires an additional argument 
to show that a component containing a given set is likely to be not much larger, and we 
prove this in \refsubsect{ShapeProof}.  Finally, in \refsubsect{ConnectedProof} we give the 
proof of \refprop{Connected}.


\subsection{Approximation by lattice animals}
\lbsubsect{ApproxByLA}

\begin{lemma}
\lblemma{SetsAndAnimals}
Let $\rho>0$ and $n\in\N$ satisfy $\rho n \geq 2\sqrt{d}$, and let $\phi>0$. Then, 
given a bounded connected set $E\subset\R^d$, there is an $A\in\cA^\boxempty$ such 
that $E(A)=n^{-1}\phi^{-1} A$ satisfies $E\subset E(A)\subset E_{\rho/\phi}$ and, 
for any $x\in\T^d$, $0\leq\tilde{\rho}\leq\tfrac{1}{4}\rho$,
\begin{align}
\label{SetInclusion}
x+\phi E\subset x'+ \phi & E(A) \subset x+(\phi E)_\rho \qquad\text{for some }x'\in G_n,\\
\label{MissSetAndAnimal}
\big\{(x+\phi E)\intersect W_\rho[0,t]=\emptyset\big\}
\subset&
\set{\exists x'\in G_n\colon\,(x'+\phi E(A))\intersect W[0,t]=\emptyset},\\
\label{HitSetAndAnimal}
\big\{(x+\phi E)\intersect W_{\tilde\rho}[0,t]\neq\emptyset\big\}
\subset&
\set{(x+\phi E(A))\intersect W[0,t]\neq\emptyset}.
\end{align}
\end{lemma}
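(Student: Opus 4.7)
The plan is to construct a single $A$ once and for all as a fattened lattice approximation of $n\phi E$, and then read off all three assertions by routine set-theoretic manipulations. I will take $A$ to be the union of all closed unit cubes $C_z = z + [-\tfrac12,\tfrac12]^d$ with $z\in\Z^d$ that intersect the closed neighborhood $(n\phi E)_{n\rho/4}$. The enlargement scale $n\rho/4$ is the crucial tuning: it must be large enough to swallow the $\sqrt{d}/2$ displacement arising from snapping $x$ to the nearest grid point $x'\in G_n$, yet small enough that, after absorbing the unit-cube diameter $\sqrt{d}$, one still lies inside $(n\phi E)_{n\rho}$. Both constraints reduce to the hypothesis $n\rho\geq 2\sqrt{d}$.

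Since $E$ (hence $n\phi E$, hence $(n\phi E)_{n\rho/4}$) is bounded and connected, $A$ is a connected finite union of unit cubes, so $A\in\cA^\boxempty$. For $E\subset E(A)$: the cubes $C_z$ tile $\R^d$, so any point of $(n\phi E)_{n\rho/4}$ lies in some $C_z\subset A$, giving $A\supset (n\phi E)_{n\rho/4}\supset n\phi E$. For $E(A)\subset E_{\rho/\phi}$: every cube of $A$ has diameter $\sqrt{d}$ and meets $(n\phi E)_{n\rho/4}$, so $A\subset (n\phi E)_{n\rho/4+\sqrt{d}}\subset (n\phi E)_{n\rho}$. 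Rescaling by $(n\phi)^{-1}$ converts both inclusions into the required two-sided bound on $E(A)$.

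For \eqref{SetInclusion}, I will let $x'\in G_n$ be a closest grid point to $x$, so that $d(x,x')\leq\sqrt{d}/(2n)$. Writing $x+\phi E = x' + (x-x')+\phi E$, the translate $n(x-x')+n\phi E$ lies in $(n\phi E)_{\sqrt{d}/2}\subset (n\phi E)_{n\rho/4}\subset A$, yielding $x+\phi E\subset x'+\phi E(A)$. The other half, $x'+\phi E(A)\subset x+(\phi E)_\rho$, follows from $\phi E(A)\subset (\phi E)_{\rho/4+\sqrt{d}/n}$ combined with a triangle inequality accounting for $d(x,x')$; the total enlargement $\rho/4+3\sqrt{d}/(2n)$ fits within $\rho$ precisely when $n\rho\geq 2\sqrt{d}$. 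The two non-intersection statements are then immediate: \eqref{MissSetAndAnimal} uses the symmetry $(x+\phi E)\cap W_\rho[0,t]=\emptyset \Longleftrightarrow (x+\phi E)_\rho\cap W[0,t]=\emptyset$ together with the upper inclusion. For \eqref{HitSetAndAnimal}, the construction additionally yields $\phi E(A)\supset (\phi E)_{\rho/4}$, so $x+\phi E(A)\supset (x+\phi E)_{\tilde\rho}$ whenever $\tilde\rho\leq\rho/4$; any point of $W[0,t]$ witnessing $(x+\phi E)\cap W_{\tilde\rho}[0,t]\neq\emptyset$ therefore automatically lies in $x+\phi E(A)$.

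The only real difficulty is the geometric bookkeeping: three competing budgets---grid snapping, cube-diameter slack, and the $\rho/4$ hitting margin---must all fit into the single allowance $\rho$, and the choice of enlargement radius $n\rho/4$ in the definition of $A$ is what makes every triangle inequality balance under the sole hypothesis $n\rho\geq 2\sqrt{d}$. Beyond this, no analytic or probabilistic input is needed.
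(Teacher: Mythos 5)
Your proposal is correct and follows essentially the same construction as the paper: both define $A$ as the union of unit cubes meeting the $n\rho/4$-enlargement of $n\phi E$ (the paper writes this equivalently as $n\phi E_{\rho/4\phi}$), establish $E_{\rho/4\phi}\subset E(A)\subset E_{3\rho/4\phi}$ using $\sqrt{d}/n\le\rho/2$, absorb the grid-snapping $\sqrt{d}/(2n)\le\rho/4$ in the same chain of inclusions, and read off the two non-intersection statements from $(\phi E)_{\rho/4}\subset\phi E(A)\subset(\phi E)_{3\rho/4}$. The budget accounting you describe matches the paper's step-by-step inclusions.
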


\begin{proof}
Let $A$ be the union of all the closed unit cubes with centres in $\Z^d$ that intersect 
$n\phi E_{\rho/4\phi}$. This set is connected because $E$ is connected, and therefore 
$A\in\cA^\boxempty$. Every cube in $A$ is within distance $\sqrt{d}$ of some point of 
$n\phi E_{\rho/4\phi}$, so that $E\subset E_{\rho/4\phi}\subset E(A)\subset 
E_{\rho/4\phi+\sqrt{d}/n\phi}$. By assumption, $\sqrt{d}/n\leq\rho/2$, so that 
$E(A)\subset E_{3\rho/4\phi}\subset E_{\rho/\phi}$ (see \reffig{SetAndAnimal}(a)).

\begin{figure}[htbp]
{
\hfill
(a)
\includegraphics{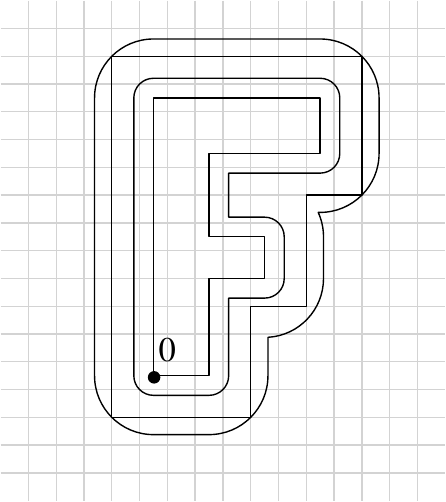}
\hfill
(b)
\includegraphics{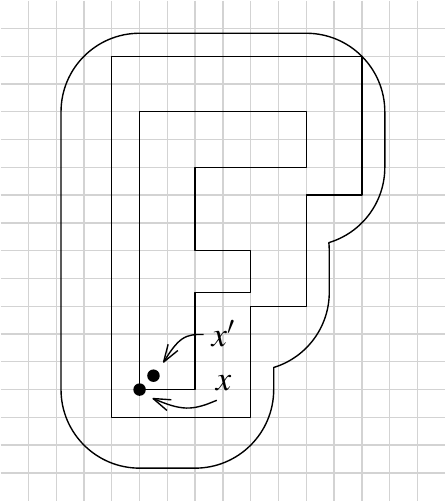}
\hfill
}
\caption{(a) From inside to outside: an F-shaped set $E$; the enlargement $E_{\rho/4\phi}$; 
$E(A)$, the union of the rescaled cubes intersecting $E_{\rho/4\phi}$; the bounding set 
$E_{3\rho/4\phi}$. The grid shows the cubes in the definition of $E(A)$, rescaled to have 
side length $1/n\phi$. The parameters $\rho,n$ satisfy $\rho n =2\sqrt{d}$. (b) From inside 
to outside (scaled by $\phi$ compared to part (a)): the prospective subset $x+\phi E$ of 
$\T^d\setminus W_\rho[0,t]$; the approximating grid-aligned set $x'+\phi E(A)$; the taboo 
set $x+(\phi E)_\rho$ that the Brownian motion must not visit.}
\lbfig{SetAndAnimal}
\end{figure}

Given $x\in\T^d$, let $x'\in G_n$ satisfy $d(x,x')\leq\sqrt{d}/2n$. Then $x+\phi E\subset 
x'+(\phi E)_{\sqrt{d}/2n}\subset x'+\phi E(A)\subset x+(\phi E(A))_{\sqrt{d}/2n}\subset x
+(\phi E)_\rho$ since $\sqrt{d}/2n\leq \rho/4$ and $\phi E(A)\subset (\phi E)_{3\rho/4}$. 
See \reffig{SetAndAnimal}(b). This proves \eqref{SetInclusion}; \eqref{MissSetAndAnimal} 
follows immediately because $(x+\phi E)\intersect W_\rho[0,t]=\emptyset$ is equivalent to 
$(x+(\phi E)_\rho) \intersect W[0,t]=\emptyset$.  

Similarly, since $(\phi E)_{\rho/4}\subset \phi E(A)$ and since $(x+\phi E)\intersect 
W_{\tilde\rho}[0,t]\neq\emptyset$ is equivalent to $(x+(\phi E)_{\tilde\rho})\intersect 
W[0,t]\neq\emptyset$, the inclusion in \eqref{HitSetAndAnimal} follows.
\qed\end{proof}


\subsubsection{Proof of \refthm{NoTranslates}}

In this section we prove the following theorem, of which \refthm{NoTranslates} is the special 
case with $S(t)=\T^d$.

\begin{theorem}
\lbthm{HitManySets}
Fix non-negative functions $t \mapsto \rho(t)$ and $t \mapsto h(t)$ satisfying 
\begin{equation}
\label{phihcond}
\lim_{t\to\infty} \frac{\rho(t)}{\phi_d(t)} = 0, \qquad 
\lim_{t\to\infty} \frac{\log[h(t)/\phi_d(t)]}{\log t} \leq 0,
\end{equation} 
and collections of points $(S(t))_{t>1}$ in $\T^d$ such that $\max_{x\in\T^d} d(x,S(t)) 
\leq h(t)$ for all $t>1$. Then, for any $E\subset\R^d$ compact with $\Capa E <\kappa_d$,
\begin{equation}
\label{AvoidAll}
\log \P\left( (x+\phi_d(t) E)\intersect W_{\rho(t)}[0,t]\neq\emptyset \; 
\forall x\in S(t) \right) \leq -t^{J_d(\Capa E)+o(1)}, \qquad t\to\infty.
\end{equation}
\end{theorem}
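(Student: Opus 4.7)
The plan is to reduce the Wiener-sausage event to an event involving only the bare Brownian path applied to a lattice-animal approximation of $E$, and then invoke \refprop{HitManyLatticeAnimals}. The inclusion \refeq{HitSetAndAnimal} in \reflemma{SetsAndAnimals} is the key transfer tool: if $W_{\tilde\rho}[0,t]$ meets $x+\phi_d(t)E$, then $W[0,t]$ itself meets the slightly larger grid-aligned set $x+\phi_d(t)E(A)$, for any $\tilde\rho\leq\rho/4$.

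First I would fix an auxiliary capacity $\kappa\in(\Capa E,\kappa_d)$; the target bound follows by letting $\kappa\decreasesto\Capa E$ at the end and using continuity of $J_d$. I would then choose a buffer radius and grid resolution
\[
\rho'(t)=\max\{4\rho(t),\,\phi_d(t)/\log t\},\qquad n(t)=\bigl\lceil 2\sqrt{d}/\rho'(t)\bigr\rceil.
\]
Under \eqref{phihcond} one has $\rho'(t)/\phi_d(t)\to 0$, and by construction $\phi_d(t)/\rho'(t)\leq\log t$, hence $n(t)\phi_d(t)=O(\log t)=t^{o(1)}$. Combined with the assumption on $h(t)$, this verifies both conditions in \refeq{WeakBoundOnCubeNumber} required by \refprop{HitManyLatticeAnimals}.

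Next I would apply \reflemma{SetsAndAnimals} with parameters $\rho'(t)$, $n(t)$, $\phi=\phi_d(t)$ to extract a single lattice animal $A=A(t)$, the same for every $x$, satisfying $E\subset E(A)\subset E_{\rho'(t)/\phi_d(t)}$. (I assume here that $E$ is connected; the general compact case is handled by running the construction on each relevant component and controlling the total capacity via \refeq{CapacityOfUnion}.) Since $\rho'(t)/\phi_d(t)\to 0$, \refprop{CapacityContinuity}\refitem{CapacityCompact} gives $\Capa E_{\rho'(t)/\phi_d(t)}\decreasesto\Capa E<\kappa$, so $\Capa E(A)\leq\kappa$ for $t$ sufficiently large. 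Because $4\rho(t)\leq\rho'(t)$, the inclusion \refeq{HitSetAndAnimal} applies for every $x\in S(t)$ with the same $A$, giving
\[
\bigl\{(x+\phi_d(t)E)\cap W_{\rho(t)}[0,t]\neq\emptyset\;\forall x\in S(t)\bigr\}\subset \bigl\{(x+\phi_d(t)E(A))\cap W[0,t]\neq\emptyset\;\forall x\in S(t)\bigr\}.
\]
The probability of the right-hand event is dominated by the ``$\exists A$'' probability bounded in \refprop{HitManyLatticeAnimals}, yielding $\log\P\leq -t^{J_d(\kappa)-o(1)}$; letting $\kappa\decreasesto\Capa E$ produces the claimed bound $-t^{J_d(\Capa E)+o(1)}$.

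The main obstacle is the simultaneous calibration of $\rho'(t)$: it must be at least $4\rho(t)$ (so that \refeq{HitSetAndAnimal} transfers the Wiener-sausage event to the Brownian-path event), it must be $o(\phi_d(t))$ (so that capacity continuity collapses $\Capa E_{\rho'/\phi_d}$ to $\Capa E$), and it must not shrink faster than $\phi_d(t)/t^{o(1)}$ (so that $n(t)\phi_d(t)=t^{o(1)}$, meeting the hypothesis of \refprop{HitManyLatticeAnimals}). The choice $\rho'(t)=\max\{4\rho(t),\,\phi_d(t)/\log t\}$ threads all three constraints regardless of how small $\rho(t)$ is. A secondary technical point is the reduction from compact to connected $E$, accomplished component-wise with routine capacity bookkeeping.
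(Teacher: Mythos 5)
Your main line of argument is sound and essentially matches the paper's: approximate $E$ from outside by a rescaled lattice animal $E(A)$ via \reflemma{SetsAndAnimals}, choose the grid scale so that \refeq{WeakBoundOnCubeNumber} holds, use \refprop{CapacityContinuity}\refitem{CapacityCompact} to keep $\Capa E(A)\leq\kappa$, then apply \refprop{HitManyLatticeAnimals} and let $\kappa\decreasesto\Capa E$. Your choice $\rho'(t)=\max\{4\rho(t),\phi_d(t)/\log t\}$ differs cosmetically from the paper's $\rho_0(t)=r'\phi_d(t)$ (a fixed multiple of $\phi_d(t)$ chosen via a preliminary $\delta$); both thread the required constraints, and your calibration check that $n(t)\phi_d(t)=O(\log t)=t^{o(1)}$ is correct.

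The genuine gap is the treatment of disconnected $E$, which you flag as a ``secondary technical point'' to be handled ``component-wise with routine capacity bookkeeping.'' It is not routine, and the component-wise route does not close. \refprop{HitManyLatticeAnimals} is stated for a single connected lattice animal $A\in\cA^\boxempty$. If you approximate each component of a thickening $E_r$ by its own animal $A^{(j)}$, the event you end up needing to bound is $\{\forall x\in S(t):\ (x+\phi_d(t)\bigcup_j E(A^{(j)}))\cap W[0,t]\neq\emptyset\}$, and $\bigcup_j E(A^{(j)})$ is not a lattice animal when the components are far apart. There is no proposition in the paper (analogous to \reflemma{HitFiniteUnionOfLAs} or \reflemma{FiniteUnionOfLACounts}) that extends \refprop{HitManyLatticeAnimals} to finite unions, and \refeq{CapacityOfUnion} only bounds the capacity of the union, it does not let you reduce to the connected case. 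Nor can you restrict to a single component: the target event $\{(x+\phi_d(t)E)\cap W_{\rho(t)}[0,t]\neq\emptyset\ \forall x\}$ is not contained in the corresponding event for any single $E^{(j)}\subset E$, since the inclusion goes the wrong way. The paper's fix is the specific observation that $E_r$ (which, being the $r$-enlargement of a compact set, has finitely many components) can be joined into a connected set $E'$ by adjoining a finite number of line segments, and these segments, being polar, do not change the capacity; one then applies \refprop{CapacityContinuity}\refitem{CapacityCompact} once more to $E'$. You would need either this line-segment argument or a new union-of-animals version of \refprop{HitManyLatticeAnimals} to complete the proof for general compact $E$.
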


\begin{proof}
Fix $E\subset\R^d$ compact with $\Capa E<\kappa_d$, and let $\delta>0$ be arbitrary with 
$\Capa E +\delta <\kappa_d$. By \refprop{CapacityContinuity}\refitem{CapacityCompact}, we 
can choose $r>0$ so that $\Capa(E_r)\leq \Capa E+\tfrac{1}{2}\delta$. If $E_r$ is not 
already connected, then enlarge it to a connected set $E'\supset E_r$ by adjoining a 
finite number of line segments (this is possible because $E_r$ is the $r$-enlargement 
of a compact set). Doing so does not change the capacity, so we may apply 
\refprop{CapacityContinuity}\refitem{CapacityCompact} again to find $r'>0$ so that 
$\Capa((E')_{r'})\leq\Capa E+\delta$.

Define $\rho_0(t)=r'\phi_d(t)$ and $n(t)=\ceiling{2\big.\smash{\sqrt{d}}
/\rho_0(t)}$, so that $\rho_0(t)n(t)\geq 2\sqrt{d}$ and the condition 
\eqref{WeakBoundOnCubeNumber} from \refprop{HitManyLatticeAnimals} holds. Since 
$\rho(t)/\phi_d(t)\to 0$, we may choose $t$ sufficiently large so that $\rho(t)\leq 
\tfrac{1}{4}\rho_0(t)$.

Apply \reflemma{SetsAndAnimals} to $E'$ with $\rho=\rho_0(t)$, $\tilde\rho=\rho(t)$, and 
$\phi=\phi_d(t)$. Note that if $(x+\phi_d(t)E)\intersect W_{\rho(t)}[0,t]\neq\emptyset$ 
for all $x\in S(t)$, then $(x+\phi_d(t)E(A))\intersect W[0,t]\neq\emptyset$ for all 
$x\in S(t)$, where $\Capa E(A)\leq\Capa((E')_{\rho/\phi})=\Capa((E')_{r'})\leq\Capa E+\delta$. 
By \refprop{HitManyLatticeAnimals} with $\kappa=\Capa E +\delta$, this event has 
a probability that is at most $\exp[-t^{J_d(\Capa E)-O(\delta)}]$, and taking $\delta\decreasesto 0$ 
we get the desired result.
\qed\end{proof}


\subsubsection{Proof of \refthm{CapacitiesInWc}}

\begin{proof}
First consider $\kappa<\kappa_d$. Since $I_d(\kappa)$ is infinite for such $\kappa$, it 
suffices to show that $\lim_{t\to\infty} \log\P(\kappa^*(t,\rho(t))\leq \kappa\phi^{d-2})
/\log t=-\infty$. Let $\kappa<\kappa'<\kappa_d$, and take $E$ to be a ball of capacity 
$\kappa'$.  If $\kappa^*(t,\rho(t)) \leq \kappa\phi^{d-2}$, then no translate $x+\phi_d(t)E$, 
$x\in\T^d$, can be a subset of $\T^d\setminus W_{\rho(t)}[0,t]$. Applying \refthm{NoTranslates}, we conclude that $\P(\kappa^*(t,\rho(t))\leq \kappa\phi^{d-2})\leq 
\exp[-t^{J_d(\kappa)+o(1)}]$, which implies the desired result.

Next consider the LDP upper bound for $\kappa\geq\kappa_d$. Since $\kappa \mapsto I(\kappa)$ 
is increasing and continuous on $[\kappa_d,\infty]$, it suffices to show that $\P(\kappa^*
(t,\rho(t))\geq\kappa\phi^{d-2})\leq t^{-I_d(\kappa)+o(1)}$ for $\kappa>\kappa_d$. Therefore, 
suppose that $x+\phi_d(t) E\subset\T^d\setminus W_{\rho(t)}[0,t]$ for some $x\in\T^d$ and 
$E\subset\R^d$ compact with $\Capa E\geq\kappa$. As in the proof of \refthm{HitManySets}, 
define $n(t)=\ceiling{2\big.\smash{\sqrt{d}}/\rho(t)}$. \reflemma{SetsAndAnimals} gives 
$(x'+\phi_d(t) E(A))\intersect W[0,t]=\emptyset$ for some $x'\in G_{n(t)}$ and $\Capa E(A)
\geq\Capa E\geq\kappa$. The condition in \eqref{RadiusBound} on $\rho(t)$ implies the 
condition in \eqref{BoundOnCubeNumber} on $n(t)$, and therefore we may apply 
\refprop{HitLatticeAnimal} to conclude that $\P(\kappa^*(t,\rho(t))\geq\kappa\phi^{d-2})
\leq t^{-I_d(\kappa)+o(1)}$.

Finally, the LDP lower bound for $\kappa\geq\kappa_d$ will follow (with $E$ the ball of 
capacity $\kappa$, say) from the lower bound proved for \refthm{ShapeOfComponents} (see 
\refsubsect{ShapeProof}).
\qed\end{proof}


\subsubsection{Proof of \refthm{ComponentCounts}}

\begin{proof}
As in the proof of \refthm{CapacitiesInWc}, the lower bound will follow from the more 
specific lower bound proved for \refthm{ShapeOfComponents} (see \refsubsect{ShapeProof}).

Choose $n(t)$ such that $n(t)\geq 2\sqrt{d}/\rho(t)$ and the hypotheses of \refprop{AnimalCounts} 
hold. (The conditions on $n(t)$ are mutually consistent because $2\sqrt{d}/\rho(t)=O(1/\phi_d(t))$.)  
Given any component $C$ containing a ball of radius $\rho(t)$ and having the form $C=x+\phi_d(t)E$ 
for $\Capa E\geq\kappa$, apply \reflemma{SetsAndAnimals} to find $x'_C\in G_{n(t)}$ and 
$A_C\in\cA^\boxempty$ such that $C\subset x'_C+\phi_d(t)E(A_C)\subset C_{\rho(t)}\subset 
\T^d\setminus W[0,t]$. The pairs $(x'_C,E(A_C))$ so constructed must be distinct: for 
$C'\neq C$, we have $x'_{C'}+\phi_d(t)E(A_{C'})\subset C'_{\rho(t)} \subset (\T^d\setminus 
C)_{\rho(t)}=\T^d\setminus C_{-\rho(t)}$, and since $C_{-\rho(t)}$ is non-empty by assumption, 
it follows that $C\nsubseteq x'_{C'}+\phi_d(t)E(A_{C'})$. We therefore conclude that $\chi_{\rho(t)}
(t,\kappa)\leq \chi_+^\boxempty(t,n(t),\kappa)$, so the required upper bound follows from 
\refprop{AnimalCounts}.
\qed\end{proof}


\subsubsection{Proof of \refprop{NoWrapping}}

\begin{proof}
Abbreviate $\phi=\phi_d(t),\rho=\rho(t)$. It suffices to bound the probability that 
$\T^d\setminus W_\rho[0,t]$ has a component of diameter at least $\tfrac{1}{2}$, since 
the mapping $x+y\mapsto y$ from $B(x,r)\subset\T^d$ to $B(0,r)\subset\R^d$ is a well-defined 
local isometry if $r<\tfrac{1}{2}$. 

Suppose that $x\in\T^d\setminus W_\rho[0,t]$ belongs to a connected component intersecting 
$\boundary B(x,\tfrac{1}{2})$. Then there is a bounded connected set $E\subset\R^d$ such 
that $(x+\phi E)\intersect W_\rho[0,t]$ and $E\intersect\boundary B(0,\tfrac{1}{2}\phi^{-1})
\neq\emptyset$ (see \reffig{Diameter1}). Define $n=n(t)=\ceiling{2\big.\smash{\sqrt{d}/\rho}}$ 
and apply \reflemma{SetsAndAnimals} to conclude that $(x'+\phi E(A))\intersect W[0,t]=\emptyset$ 
with $E\subset E(A)$, $A\in\cA^\boxempty$, $x'\in G_n$. Since $E(A)$ contains $E$, it has 
diameter at least $\tfrac{1}{2}\phi^{-1}$, so $A$ has diameter at least $\tfrac{1}{2}n$ and 
must consist of at least $n/(2\sqrt{d})$ unit cubes. Since $\rho=o(\phi)$ and $\phi=t^{-d/(d-2)
+o(1)}$, we have $n\geq t^{d/(d-2)+o(1)}$. The hypothesis in \eqref{RadiusBound} implies that 
$n\phi=o((\log t)^{1/d})$, as in condition \eqref{BoundOnCubeNumber} from \refprop{HitLatticeAnimal}.
Therefore $\Vol E(A) \geq (n\phi)^{-d}n/(2\sqrt{d})\geq t^{d/(d-2)+o(1)}$, and in particular 
$\Vol E(A)\to\infty$. By \eqref{CapaVolEval}, $\Capa E(A)\to\infty$ also. Thus, if $\T^d\setminus 
W_\rho[0,t]$ has a component of diameter at least $\tfrac{1}{2}$, then the event in 
\refprop{HitLatticeAnimal} occurs, with $\kappa$ arbitrarily large for $t\to\infty$. By 
\refprop{HitLatticeAnimal}, the probability of this occuring is negligible, as claimed.
\qed\end{proof}

\begin{figure}[htbp]
\begin{center}
\includegraphics{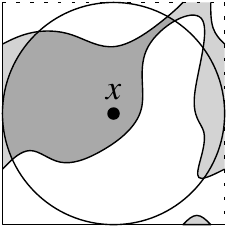}
\caption{A large connected component of $\T^d\setminus W_\rho[0,t]$ that is not isometric 
to a subset of $\R^d$ (shading) and a possible choice of the set $x+\phi E$ (dark shading).}
\lbfig{Diameter1}
\end{center}
\end{figure}

This proof is unchanged if the radius $\tfrac{1}{2}$ is replaced by any $\delta\in(0,\tfrac{1}{2})$, which shows that the maximal diameter $D(t,\rho(t))$ satisfies $D(t,\rho(t))\to 0$ in 
$\P$-probability when \eqref{RadiusBound} holds (see \refsubsubsect{DiameterDiscussion}).


\subsection{Proof of \refthm{ShapeOfComponents}}
\lbsubsect{ShapeProof}

In Theorems~\ref{t:CapacitiesInWc}--\ref{t:NoTranslates} we deal with components 
that contain a subset $x+\phi_d(t)E$ of a given form.  \refthm{ShapeOfComponents} adds the requirement that 
the component containing such a subset should not extend further than distance $\delta\phi_d(t)$ 
from $x+\phi_d(t)E$. In the proof, we will bound the probability that the component extends no 
further than distance $\rho(t)$ from $x+\phi_d(t)E$, but only for sets $E\in\cE^\boxempty_c$ of 
the following kind: define
\begin{equation}
\label{cEboxcDefn}
\cE^\boxempty_c = \set{\text{$E\in\cE_c$: $E=\tfrac{1}{n}A$ for some $A\in\cA^\boxempty$}}
\end{equation} 
to be the collection of sets in $\cE_c$ that are rescalings of lattice animals.  

Note that, unlike in \refsect{LatticeAnimals}, the scaling factor $\tfrac{1}{n}$ in 
\eqref{cEboxcDefn} is fixed and does not depend on $t$. We begin by showing that the collection 
$\cE^\boxempty_c$ is dense in $\cE_c$.

\begin{lemma}
\lblemma{cEboxcDense}
Given $E\in\cE_c$ and $\delta>0$, there exists $E^\boxempty\in\cE^\boxempty_c$ with $E\subset 
E^\boxempty\subset E_\delta$.
\end{lemma}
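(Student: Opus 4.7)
The plan is to first approximate $E$ from outside by the union $\tilde A_n$ of the small grid cubes meeting it, then fill in any bounded components of the complement so that the resulting set has connected complement. Concretely, for $n \in \N$ let $\tilde A_n$ be the union of all closed cubes of side length $1/n$ with centres in $\tfrac{1}{n}\Z^d$ that intersect $E$, let $U_n^\infty$ be the unbounded connected component of $\R^d \setminus \tilde A_n$, and set $E^\boxempty := \R^d \setminus U_n^\infty$ (equivalently, $\tilde A_n$ together with all the bounded components of $\R^d\setminus \tilde A_n$). I will take $n$ large enough, depending on $E$ and $\delta$.

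Given this construction, the inclusions $E \subset \tilde A_n \subset E^\boxempty$ and the connectedness of $\R^d\setminus E^\boxempty = U_n^\infty$ are immediate. To verify $E^\boxempty \in \cE^\boxempty_c$, I plan to observe that each component of $\R^d \setminus \tilde A_n$ is open and that its closure is a union of closed grid cubes (the out-cubes whose interiors lie in that component), so $E^\boxempty$ is of the form $\tfrac{1}{n}A$ with $A$ a union of finitely many unit cubes. Connectedness of $E^\boxempty$ will follow from the fact that $\tilde A_n$ is connected (since $E$ is connected and each cube of $\tilde A_n$ meets $E$), while each added bounded component has its boundary in $\tilde A_n$.

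The crucial step is to verify $E^\boxempty \subset E_\delta$ for $n$ sufficiently large, or equivalently that every bounded component of $\R^d\setminus \tilde A_n$ lies in $E_\delta$. As a preliminary, note that $\tilde A_n \subset E_{\sqrt d/n}$, so for large $n$ any fixed ball $B(0,R_0)$ containing $E_1$ contains all bounded components, since $\R^d\setminus B(0,R_0)$ is connected (as $d\geq 3$) and avoids $\tilde A_n$, hence lies in $U_n^\infty$. Suppose for contradiction the claim fails: then along some $n_k\to\infty$ there exist points $x_k$ in bounded components of $\R^d\setminus \tilde A_{n_k}$ with $d(x_k,E)>\delta$, and by compactness $x_k \to x_\infty$ along a subsequence, with $d(x_\infty,E)\geq \delta$. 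Since $\R^d\setminus E$ is open and path-connected, I connect $x_\infty$ to a fixed base point $y$ with $|y|>R_0+1$ by a continuous path $\gamma$ in $\R^d\setminus E$; by compactness $d(\gamma,E)\geq\epsilon$ for some $\epsilon>0$. For $k$ so large that $\sqrt d/n_k < \tfrac12\min(\delta,\epsilon)$ and $|x_k-x_\infty|<\tfrac12\delta$, concatenating the straight segment from $x_k$ to $x_\infty$ with $\gamma$ yields a continuous curve at distance $>\sqrt d/n_k$ from $E$, hence avoiding $\tilde A_{n_k}$. This exhibits a path in $\R^d\setminus \tilde A_{n_k}$ from $x_k$ to $y \in U_{n_k}^\infty$, contradicting $x_k$ being in a bounded component.

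The main obstacle is this last contradiction argument: the earlier steps amount to careful bookkeeping about cube decompositions, but here is where the qualitative hypothesis that $\R^d\setminus E$ is connected gets leveraged quantitatively to prevent the bounded cavities of the lattice complement from extending far beyond $E$ as the mesh is refined.
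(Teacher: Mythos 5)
Your proof is correct. Like the paper, you build a lattice-cube outer approximation of $E$, fill in the bounded components of its complement, and use connectedness of $\R^d\setminus E$ to show those components cannot reach outside $E_\delta$. The overall strategy is the same, but the crucial step is handled differently. The paper introduces the auxiliary function $b(y)=\sup\{r>0: y$ in the unbounded component of $\R^d\setminus E_r\}$, argues that $b$ is continuous and positive on $\R^d\setminus E$, extracts a uniform lower bound $\eta$ on an annular shell by compactness, and chooses the mesh after fixing $\eta$; it also outsources the lattice-animal construction to \reflemma{SetsAndAnimals}. You instead build $\tilde A_n$ directly as the cubes meeting $E$ and replace the $b$-function argument by a compactness/subsequence contradiction: if bounded cavities escaped $E_\delta$ along a sequence $n_k\to\infty$, their witnesses $x_k$ accumulate at some $x_\infty$ with $d(x_\infty,E)\ge\delta$, and a fixed path from $x_\infty$ to infinity in the open connected set $\R^d\setminus E$, kept at distance $\ge\epsilon$ from $E$, gives for large $k$ a path from $x_k$ to infinity in $\R^d\setminus\tilde A_{n_k}$, a contradiction. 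This trades the paper's (somewhat delicate) continuity claim for a non-constructive but more elementary argument; it gives no explicit mesh size in terms of $\delta$, which is harmless here. One small inaccuracy: you invoke $d\ge 3$ to say $\R^d\setminus B(0,R_0)$ is connected, but this holds already for $d\ge 2$; the hypothesis $d\ge 3$ is irrelevant to this step. Also worth being slightly more careful (though it is true) that $E^\boxempty$ is a union of closed grid cubes: every point of a bounded component lies in some grid cube whose interior, being connected and disjoint from $\tilde A_n$, lies entirely in one bounded component, whence the cube is contained in $E^\boxempty=\R^d\setminus U_n^\infty$ because $E^\boxempty$ is closed.
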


\reflemma{cEboxcDense} will allow us to prove \refthm{ShapeOfComponents} only for 
$E\in\cE^\boxempty_c$.

\begin{proof}
For $y\notin E$, define 
\begin{equation}
b(y)=\sup\set{r>0\colon\, y\text{ belongs to the unbounded component of }\R^d\setminus E_r}.
\end{equation}
Since $\R^d\setminus E$ is open and connected, $b(y)$ is continuous and positive on 
$\R^d\setminus E$. By compactness, we may choose $\eta\in(0,\delta)$ such that $b(y)>\eta$ 
for $y\notin E_\delta$. Apply \reflemma{SetsAndAnimals} (with $\rho$ and $\phi$ replaced 
by $\eta$ and 1, and $n$ sufficiently large) to find $E'=\tfrac{1}{n}A$ with $E\subset 
E'\subset E_\eta$. The set $E'$ is a rescaled lattice animal, but $\R^d\setminus E'$ might 
not be connected. However, if $y$ belongs to a bounded component of $\R^d\setminus E'$, then 
$b(y)\leq\eta$ by construction: since $E'\subset E_\eta$, $y$ cannot belong to the unbounded 
component of $\R^d\setminus E_\eta$. By choice of $\eta$, it follows that every bounded 
component of $\R^d\setminus E'$ is contained in $E_\delta$. Thus, if we define $E^\boxempty$ 
to be $E'$ together with these bounded components (see \reffig{LAwConnCompl}), then 
$E^\boxempty\in\cE_c^\boxempty$ and $E^\boxempty\subset E_\delta$, as claimed.
\qed\end{proof}

\begin{figure}[htbp]
\begin{center}
\includegraphics{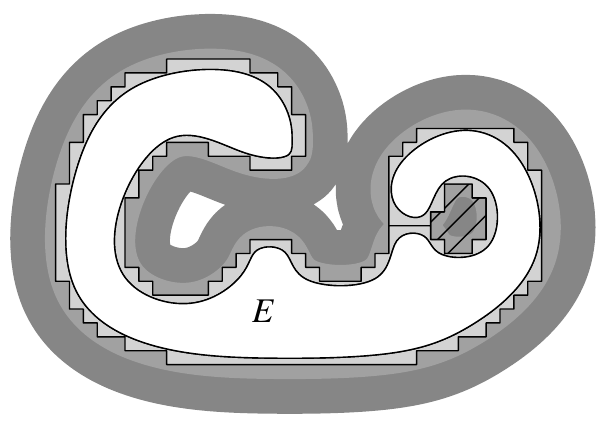}
\caption{A set $E$ (white) and its enlargement $E_\delta$ (dark shading). Every 
bounded component of $\R^d\setminus E_\delta$ can reach infinity without touching 
$E_\eta$ (medium shading). A set $E'$ (light shading) with $E\subset E'\subset E_\eta$ 
may disconnect a region from infinity (diagonal lines), but this region must belong to 
$E_\delta$.}
\lbfig{LAwConnCompl}
\end{center}
\end{figure}

In the proof of \refthm{ShapeOfComponents}, we adapt the concept of $(N,\phi,r,R)$-successful 
from \refdefn{NSuccessful} to formulate the desired event in terms of excursions. To this end 
we next introduce the sets and events that we will use. In the remainder of this section, we 
abbreviate $\phi=\phi_d(t)$, $\rho=\rho(t)$, $I_d(\kappa)=I(\kappa)$ and $J_d(\kappa)=J(\kappa)$.

Fix $E\in\cE_c^\boxempty$ and $\delta>0$. We may assume that $E\subset B(0,a)$ with $a>\delta$.
Let $\eta\in(0,\tfrac{1}{2})$ be small enough that $\kappa_d \eta^{d-2}<\Capa E$. Set 
$r=\phi^{1-\eta}$, $R=\phi^{1-2\eta}$, and let $\set{x_0,\dotsc,x_K}\subset\T^d$ denote a 
maximal collection of points in $\T^d$ satisfying $d(x_0,x_j)>R$ and $d(x_j,x_k)>2R$ for 
$j\neq k$, so that 
\begin{equation}
\label{KAsymptotics}
K= R^{-d-o(1)}
=t^{d/(d-2)+O(\eta)}
\end{equation}
Take $t$ large enough that $\rho<\tfrac{1}{2}\delta\phi$ and $R<\tfrac{1}{2}$. Set $N=(1+\eta)
N_d(t,r,R)$ (see \eqref{NdDefinition}).

Choose $q=q(t)$ with $q>2a+\delta$, $q\geq\log t$, and $q=(\log t)^{O(1)}$. Let $\set{y_1,
\dotsc,y_L}\subset B(0,2q)\setminus E_\delta$ denote a maximal collection of points in 
$B(0,2q)\setminus E_\delta$ satisfying $d(y_\ell,E)\geq \delta$, $d(y_\ell,y_m)\geq
\tfrac{1}{2}\rho/\phi$ for $\ell\neq m$, so that $L=O((q\phi/\rho)^d)=(\log t)^{O(1)}$ 
by \eqref{RadiusBound}.

(The collection $\set{y_1,\dotsc,y_L}$ will be used to ensure that a component containing 
$x_j+\phi E$ is contained in $x_j+\phi E_\delta$; see the event $F_3(j)$ below. The requirements 
on $q$ are chosen so that $L$ is suitably bounded, while also allowing us to apply 
\reflemma{LargeDistantComponents} to deal with components that are relatively far from $x_j$.)

Let $Z=\boundary(E_{\rho/\phi}) \union ( \union_{z\in B(0,2a)\intersect\eta\Z^d}
\boundary B(z,\eta) \setminus E_{\rho/\phi} )$ (see \reffig{SetZ}: $Z$ consists 
of a $(d-1)$-dimensional shell around $E$ together with a finite number of 
$(d-1)$-dimensional spheres). Let $\set{z_1,\dotsc,z_M}\subset Z$ denote a maximal 
collection of points in $Z$ with $d(z_m,z_p)\geq\tfrac{1}{2}\rho/\phi$ for $m\neq p$. 
Since $Z$ is $(d-1)$-dimensional, we have $M=O((\rho/\phi)^{d-1})$.

\begin{figure}[htbp]
\begin{center}
\includegraphics{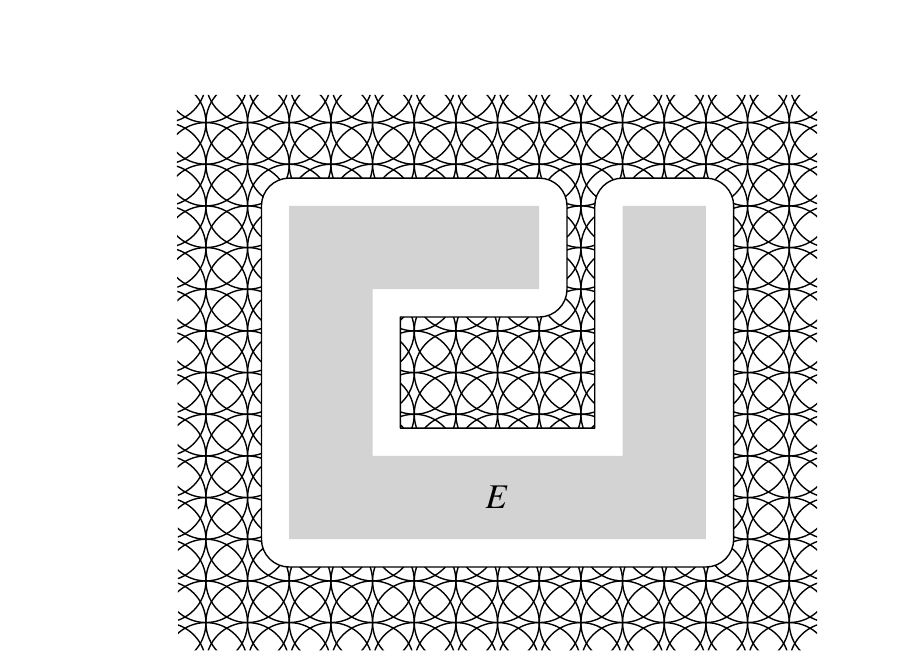}
\caption{The set $E$ (shaded) and part of the $(d-1)$-dimensional set $Z$.}
\lbfig{SetZ}
\end{center}
\end{figure}

For $j=1,\dotsc,K$, define the following events.

\begin{itemize}
\item[$\bullet$]
$F_1(j)=\set{\frac{1}{2}N\leq N(x_j,t,r,R)\leq N}$ is the event that $W$ makes between 
$\tfrac{1}{2}N$ and $N$ excursions from $\boundary B(x_j,r)$ to $\boundary B(x_j,R)$ 
by time $t$.
\item[$\bullet$]
$F_2(j)$ is the event that $(x_j,E_{\rho/\phi})$ is $(\floor{N},\phi,r,R)$-successful.
\item[$\bullet$]
$F_3(j)$ is the event that, for each $\ell=1,\dotsc,L$, the $i^\th$ excursion from 
$\boundary B(x_j,r)$ to $\boundary B(x_j,R)$ hits $x_j+B(\phi y_\ell, \tfrac{1}{2}\rho)$ 
for some $i=i(\ell)\in\set{1,\dotsc,\floor{N/4}}$.
\item[$\bullet$]
$F_4(j)$ is the event that, for each $m=1,\dotsc,M$, the $i^\th$ excursion from $\boundary 
B(x_j,r)$ to $\boundary B(x_j,R)$ hits $x_j+B(\phi z_m, \tfrac{1}{2}\rho)$ for some $i=i(m)
\in\set{\floor{N/4}+1,\dotsc,\floor{N/2}}$.
\item[$\bullet$]
$F_5(j)$ is the event that $\T^d\setminus W_\rho[0,t]$ contains no component of capacity 
at least $\phi^{d-2}\Capa E $ disjoint from $B(x_j,2q\phi)$.
\item[$\bullet$]
$F(j)=F_1(j)\intersect F_2(j)\intersect F_3(j)$.
\item[$\bullet$]
$F_{\rm max}(j)=F_1(j)\intersect F_2(j)\intersect F_3(j)\intersect F_4(j)\intersect F_5(j)$.
\end{itemize}

\begin{lemma}
\lblemma{FImplies}
On $F(j)$, the component of $\T^d\setminus W_\rho[0,t]$ containing $x_j+\phi E$ satisfies 
condition \eqref{CtrhoEEprime} with $E'=E_\delta$. Furthermore, $F_{\rm max}(j)\subset 
F_\rho(t,E,E_\delta)$ for $t$ sufficiently large.
\end{lemma}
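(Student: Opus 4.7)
The plan is to decode the geometric content of each event $F_i(j)$ and then combine them. First, $F_1(j)\cap F_2(j)$ will force $x_j+\phi E \subset \T^d\setminus W_\rho[0,t]$, so that the component $C$ containing $x_j+\phi E$ is well defined; then $F_3(j)$ will confine $C$ to $x_j+\phi E_\delta$, delivering the first claim. For the second claim, $F_5(j)$ controls components disjoint from $B(x_j,2q\phi)$ and $F_4(j)$ controls components that meet $B(x_j,2q\phi)$, each time producing a strict capacity comparison with $C$.

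For $x_j+\phi E\subset\T^d\setminus W_\rho[0,t]$, note that $E\subset B(0,a)$ together with $\rho/\phi\to 0$ and $r=\phi^{1-\eta}$ give $\phi E_{\rho/\phi}\subset B(0,r)$ for $t$ large. Thus $W$ can reach $x_j+\phi E_{\rho/\phi}$ only during one of its excursions from $\partial B(x_j,r)$ to $\partial B(x_j,R)$; $F_2(j)$ forbids this for the first $\lfloor N\rfloor$ excursions while $F_1(j)$ caps the total number of excursions up to time $t$ by $N$. Hence $(x_j+\phi E_{\rho/\phi})\cap W[0,t]=\emptyset$, equivalently $(x_j+\phi E)\cap W_\rho[0,t]=\emptyset$. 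Call the resulting component $C$. To force $C\subset x_j+\phi E_\delta$, invoke $F_3(j)$: each ball $x_j+B(\phi y_\ell,\rho/2)$ meets $W[0,t]$ and so, by the triangle inequality, sits inside $W_\rho[0,t]$; by maximality of $\set{y_\ell}$ their union covers $B(x_j,2q\phi)\setminus(x_j+\phi E_\delta)$. Because $\T^d\setminus W_\rho[0,t]$ is open and locally path-connected, $C$ is path-connected; a path in $C$ from $x_j+\phi E$ to a hypothetical point outside $x_j+\phi E_\delta$ would either traverse the covered region or cross $\partial B(x_j,2q\phi)$, and the latter is disjoint from $x_j+\phi E_\delta$ thanks to $q>2a+\delta$ and $E_\delta\subset B(0,a+\delta)$. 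Either alternative puts a point of $C$ in $W_\rho[0,t]$, a contradiction. Hence $C\subset x_j+\phi E_\delta$, and $U=\phi^{-1}(C-x_j)$ realises the condition \eqref{CtrhoEEprime} with $E'=E_\delta$.

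Now let $C'=x'+\phi U'$ be any component different from $C$. If $C'\cap B(x_j,2q\phi)=\emptyset$, then $F_5(j)$ directly gives $\Capa C'<\phi^{d-2}\Capa E\leq\phi^{d-2}\Capa U=\Capa C$, hence $\Capa U'<\Capa U$. Otherwise, rerunning the $F_3$-argument with $C'$ in place of $C$ shows $C'\subset x_j+\phi E_\delta$. On $F_4(j)$ the $\rho/2$-thickening of $x_j+\phi Z$ lies inside $W_\rho[0,t]$: the piece $x_j+\phi\partial E_{\rho/\phi}$ separates the inside of $x_j+\phi E_{\rho/\phi}$ from its outside, and the spheres $x_j+\phi\partial B(z,\eta)$ with $z\in\eta\Z^d\cap B(0,2a)\setminus E_{\rho/\phi}$ chop the outside into small cells. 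The inner core of $x_j+\phi E_{\rho/\phi}$ is connected and lies in $C$, so any rival $C'$ inside $B(x_j,2q\phi)$ must sit in such a cell or in a thin pocket near the inside of $\partial E_{\rho/\phi}$; in either case $C'$ is contained in a ball of radius $O(\eta\phi)$. The choice $\kappa_d\eta^{d-2}<\Capa E$ (after absorbing a dimensional constant if necessary) then yields $\Capa C'\leq\kappa_d(\eta\phi)^{d-2}<\phi^{d-2}\Capa E\leq\Capa C$, so $\Capa U'<\Capa U$.

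The main obstacle is the cell-size step in the second case: the geometric claim that every connected component of $\R^d\setminus Z$ meeting $E_\delta$ is contained in a ball of radius at most a dimensional constant times $\eta$. This reduces to showing that every exterior point lies inside some sphere $\partial B(z,\eta)$, $z\in\eta\Z^d$, which requires the sphere radius to dominate the Voronoi radius $\eta\sqrt{d}/2$ of the lattice and may enforce a dimension-dependent rescaling of the constants hidden in the choice of $\eta$. Once this geometric fact is secured, the capacity comparison above is immediate, and combining the two cases gives $F_{\max}(j)\subset F_\rho(t,E,E_\delta)$ for $t$ sufficiently large.
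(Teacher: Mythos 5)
Your reduction to the four claims (non-intersection from $F_1\cap F_2$, confinement to $x_j+\phi E_\delta$ from $F_3$, maximality over far components from $F_5$, maximality over near components from $F_4$) matches the paper's structure, and the first two claims are handled correctly. The cell-size worry you raise for $F_4$ is real but minor: for $d\geq 5$ the balls $B(z,\eta)$, $z\in\eta\Z^d$, no longer cover $\R^d$, so the uncovered cells of $\R^d\setminus Z$ have diameter $c(d)\eta$ rather than $\eta$; this is fixed by choosing $\eta$ so that $\kappa_d(c(d)\eta)^{d-2}<\Capa E$, exactly as you suggest.

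However, there is a genuine gap in your treatment of the ``thin pocket near the inside of $\partial E_{\rho/\phi}$''. You assert that such a pocket is also contained in a ball of radius $O(\eta\phi)$, but that is false: a pocket trapped between the Wiener sausage and $\partial(x_j+(\phi E)_\rho)$ has transverse thickness $O(\rho)$ but can extend along a face of $\phi E$ for a length of order $a\phi$, which dwarfs $\eta\phi$ since $\eta$ is chosen small while $a$ (the size of $E$) is fixed. A diameter bound therefore cannot yield the required capacity comparison. The paper handles this case by a separate two-step argument: first, a straight-segment observation shows that any point of $x_j+(\phi E)_\rho$ lying over a $\rho$-disk contained in the interior of a face of $x_j+\phi E$ is automatically in the same component $C$ as $x_j+\phi E$; second, the remaining points not in $C$ must lie within distance $2\rho$ of the $(d-2)$-dimensional edge set $H$ of $E$, and since $\Capa H=0$, continuity of capacity gives $\Capa((\phi H)_{2\rho})=\phi^{d-2}\Capa(H_{2\rho/\phi})=o(\phi^{d-2})$ as $\rho/\phi\to 0$. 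This capacity estimate for a set that is small in two dimensions but possibly long in the remaining $d-2$ is the missing idea; replacing it with a ball-radius bound does not work.
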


\begin{proof}
Note that if $F_1(j)\intersect F_2(j)$ occurs, then $x_j+\phi E\subset \T^d\setminus W_\rho[0,t]$. 
If $F_1(j)\intersect F_3(j)$ occurs, then the set $x_j+\union_{\ell=1}^L B(\phi y_\ell,
\tfrac{1}{2}\rho)$ is entirely covered by the Wiener sausage. By choice of $\set{y_1,\dotsc,
y_L}$, this set contains $x_j+\left( B(0,2q\phi)\setminus \phi E_\delta \right)$, and 
consequently $\left( \T^d\setminus W_\rho[0,t] \right) \intersect B(x_j,2q\phi) \subset x_j
+\phi E_\delta$.

We have therefore shown that, on $F(j)$, $\T^d\setminus W_\rho[0,t]$ has a component containing 
$x_j+\phi E$ and satisfying condition \hyperref[CtrhoEEprime]{$\cC(t,\rho,E,E_\delta)$}. To show 
further that $F_{\rm max}(j)\subset F_\rho(t,E,E_\delta)$, we will show any other component must 
have capacity smaller than $\phi^{d-2}\Capa E$.  

If $F_1(j)\intersect F_4(j)$ occurs, then $x_j+\phi Z$ is entirely covered by the Wiener sausage, 
by choice of $\set{z_1,\dotsc,z_M}$. By choice of $Z$, all components of $B(x_j,a\phi)\setminus
(x_j+\phi Z)$, other than any components that are subsets of $x_j+\phi E_{\rho/\phi}=x_j+(\phi 
E)_\rho$, must be contained in a ball of radius $\eta\phi$, and in particular have capacity at 
most $\kappa_d(\eta\phi)^{d-2}<\phi^{d-2}\Capa E$.  

Finally, if $F_5(j)$ occurs, then the component of largest capacity cannot occur outside 
$B(x_j,2q\phi)$, and therefore must be the component of largest capacity contained in 
$x_j+(\phi E)_\rho$.

It therefore remains to show that the component of largest capacity in $x_j+(\phi E)_\rho$ 
is in fact the component containing $x_j+\phi E$. Suppose that $a\in x_j+\phi E$ is the 
centre of a $(d-1)$-dimensional ball of radius $\rho$ that is completely contained in some 
face of $x_j+\phi E$, and let $b$ be a point at distance at most $\rho$ from $a$ along the 
line perpendicular to the face (see \reffig{NearComponent}). If both $x_j+\phi E$ and $b$ 
are contained in $\T^d\setminus W_\rho[0,t]$, then so is the line segment from $a$ to $b$, 
so that $b$ belongs to the same component as $x_j+\phi E$.

\begin{figure}[htbp]
\begin{center}
\includegraphics{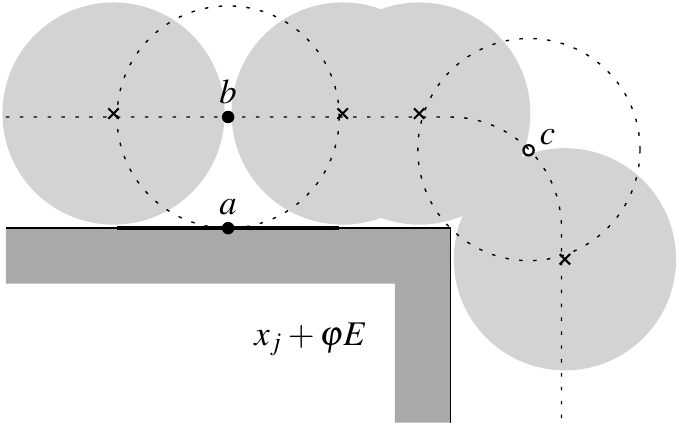}
\caption{A point $b$ near the centre $a$ of a ball (thicker line) on a face of $x_j+\phi E$, 
and a point $c$ near the boundary of a face. The Brownian path must not touch the dotted 
lines, but the Wiener sausage can fill the shaded circles by visiting the crossed points.
The point $c$ can belong to a different component than $x_j+\phi E$, but $b$ cannot.}
\lbfig{NearComponent}
\end{center}
\end{figure}

We therefore conclude that, on $F_{\rm max}(j)$, any point of $x_j+(\phi E)_\rho$ that is not 
in the same component as $x_j+\phi E$ must lie within distance $2\rho$ of the boundary of some 
face of $x_j+\phi E$. Write $H$ for the set of boundaries of faces of $E$. Since $H$ is 
\mbox{$(d-2)$}-dimensional, its capacity is $0$, and therefore $\Capa((\phi H)_{2\rho})=\phi^{d-2} 
\Capa(H_{2\rho/\phi})=o(\phi^{d-2})$ by \refprop{CapacityContinuity}\refitem{CapacityCompact}, 
since $\rho/\phi\to 0$. In particular, for $t$ sufficiently large the component of largest 
capacity in $x_j+(\phi E)_\rho$ must be the component containing $x_j+\phi E$, which completes 
the proof of \reflemma{FImplies}.
\qed\end{proof}

\begin{proof}[Proof of \refthm{ShapeOfComponents}]
Because of the upper bound proved for Theorems~\ref{t:CapacitiesInWc}--\ref{t:ComponentCounts}, 
we need only prove the lower bounds
\begin{equation}
\label{BasicLowerBoundF}
\P\left( F_\rho(t,E,E_\delta) \right)
\geq t^{-I(\Capa E)-o(1)},
\qquad \Capa E\geq\kappa_d, \delta>0.
\end{equation}
and
\begin{equation}
\label{BasicLowerBoundchi}
\chi_\rho(t,E,E_\delta)\geq t^{J(\Capa E)-o(1)}
\text{ with high probability,} \qquad\Capa E<\kappa_d,\delta>0.
\end{equation}
Moreover, it suffices to prove \eqref{BasicLowerBoundF}--\eqref{BasicLowerBoundchi} under the 
assumption that $E\in\cE_c^\boxempty$ and, in \eqref{BasicLowerBoundF}, that $\Capa E>\kappa_d$. 
Indeed, given any $\delta'\in(0,\tfrac{1}{2}\delta)$, apply \reflemma{cEboxcDense} to find 
$E^\boxempty\in\cE_c^\boxempty$ with $E\subset E^\boxempty\subset E_{\delta'}$. By adjoining, 
if necessary, a sufficiently small cube to $E^\boxempty$, we may assume that $\Capa E^\boxempty>
\Capa E$. Apply \eqref{BasicLowerBoundF}--\eqref{BasicLowerBoundchi} with $E$ and $\delta$ 
replaced by $E^\boxempty$ and $\delta'$, respectively. \refprop{CapacityContinuity}\refitem{CapacityCompact} 
implies that $\Capa E^\boxempty\decreasesto\Capa E$ as $\delta'\decreasesto 0$. Since $\kappa\mapsto
J(\kappa)$ is continuous, we conclude that the bounds for $E\in\cE_c$ follows from those for 
$E\in\cE^\boxempty_c$. 

We next relate the left-hand side of \eqref{BasicLowerBoundF} to the events $F_1(j),\dotsc,F_5(j)$. 
Noting that $F_1(j)\intersect F_2(j)\intersect F_1(k)\intersect F_2(k) \subset F_5(j)^c$ 
for $j\neq k$, \reflemma{FImplies} implies that
\begin{align}
\P\left( F_\rho(t,E,E_\delta) \right)
&\geq \sum_{j=1}^K \P_{x_0}(F(j))
\notag\\&
\geq \sum_{j=1}^K \P_{x_0}(F_2(j)\intersect F_3(j)\intersect F_4(j))
-\sum_{j=1}^K \P_{x_0}(F_1(j)^c) 
-\sum_{j=1}^K \P_{x_0}(F_1(j)\intersect F_2(j)\intersect F_5(j)^c).
\label{BoundByF}
\end{align}
We will bound each of the sums in the right-hand side of \eqref{BoundByF}.  

Applying \refprop{ExcursionNumbers} and \eqref{KAsymptotics} (and noting that $N_d(t,r,R)
=t^{\eta+o(1)}$ and that $\tfrac{1}{2}N/N_d(t,r,R)=\tfrac{1}{2}(1+\eta)<\tfrac{3}{4}$), we see 
that the second sum in the right-hand side of \eqref{BoundByF} is at most $t^{d/(d-2)+O(\eta)}
\exp[-c t^{\eta+o(1)}]$. This term will be negligible compared to the scale of 
\eqref{BasicLowerBoundF}.

For the last sum in \eqref{BoundByF}, we assume that $\Capa E>\kappa_d$ and use 
\reflemma{LargeDistantComponents}. Set $h(t)=2q\phi$, and note that $h(t)/(\phi \log t)\geq 1$ 
by assumption on $q$. If $F_1(j)\intersect F_2(j)\intersect F_5(j)^c$ occurs, then, by 
\reflemma{SetsAndAnimals}, there are lattice animals $A,A'\in\cA^\boxempty$ with $\Capa E(A),
\Capa E(A')\geq \Capa E$ and a point $x'\in\T^d\setminus B(x_j,2q\phi)$ with $(x_j+\phi E(A))
\intersect W[0,t]=(x'+\phi E(A'))\intersect W[0,t]=\emptyset$. By \reflemma{LargeDistantComponents} 
with $\kappa^{(1)}=\kappa^{(2)}=\Capa E$, we have
\begin{equation}
\P_{x_0}(F_1(j)\intersect F_2(j)\intersect F_5(j)^c)
\leq t^{-d\Capa(E)/[(d-2)\kappa_d]-I(\Capa E)+o(1)}.
\end{equation}  
Hence the last sum in \eqref{BoundByF} is at most $t^{-2I(\Capa E)+O(\eta)}$. Since $I(\Capa E)>0$, 
this term is also negligible, for $\eta$ sufficiently small, compared to the scale of 
\eqref{BasicLowerBoundF}. (This is the only part of the proof where $\Capa E>\kappa_d$ is used.)

We have therefore proved that \eqref{BasicLowerBoundF} will follow if we can give a suitable lower 
bound for the first sum on the right-hand side of \eqref{BoundByF}.  Using again the asymptotics 
\eqref{KAsymptotics} for $K$, \eqref{BasicLowerBoundF} will follow from
\begin{equation}
\label{F234Bound}
\P_{x_0}\condparenthesesreversed{F_2(j)\intersect F_3(j)
\intersect F_4(j)}{((\xi'_i(x_j),\xi_i(x_j))_{i=1}^N)_{j=1}^K} 
\geq t^{-d\Capa E/[(d-2)\kappa_d]-O(\eta)}.
\end{equation}
In fact, \eqref{F234Bound} also implies \eqref{BasicLowerBoundchi}.  On the event $\intersect_{j=1}^K 
F_1(j)$ (which occurs with high probability, by \refprop{ExcursionNumbers}), \reflemma{FImplies} 
implies that $\chi_\rho(t,E,E_\delta)$ is at least as large as the number of $j\in\set{1,\dotsc,K}$ 
for which $F_2(j)\intersect F_3(j)$ occurs. Since the events $F_2(j)\intersect F_3(j)$ are 
conditionally independent for different $j$ given the starting and ending points $((\xi'_i(x_j),
\xi_i(x_j))_{i=1}^N)_{j=1}^K$, \eqref{F234Bound} and \eqref{KAsymptotics} immediately imply that 
$\chi_\rho(t,E,E_\delta)\geq t^{J_d(\kappa)-O(\eta)}$ with high probability (cf.\ the proof of 
\refprop{AnimalCounts} in \refsubsubsect{AnimalCountsProof}).

It therefore remains to prove \eqref{F234Bound}. To do so, we will condition on not hitting 
$x_j+(\phi E)_\rho$ and use the following lemma to estimate the conditional probability of hitting 
small nearby balls. Note that, conditional on the occurence of $F_2(j)$ and the starting and 
ending points $(\xi'_i(x_j),\xi_i(x_j))_{i=1}^N$, the events $F_3(j)$ and $F_4(j)$ are independent.

\begin{lemma}
\lblemma{HitWhileAvoiding}
Fix $E\in\cE_c^\boxempty$ and $\delta>0$, and let $0<\rho<\phi<r<R<\tfrac{1}{2}$. Then there 
is an $\epsilon>0$ such that if $\rho/\phi<\epsilon$, $\phi/r<\epsilon$ and $r/R\leq\tfrac{1}{2}$, 
then, uniformly in $x\in\T^d$, $\xi'\in \boundary B(x,r)$, and $\xi\in\boundary B(x,R)$,
\begin{multline}
\P_{\xi',\xi} \! \condparentheses{(x+B(\phi y,\tfrac{1}{2}\rho))\intersect W[0,\zeta_R] 
\neq\emptyset}{(x+(\phi E)_\rho)\intersect W[0,\zeta_R] =\emptyset }
\\
\geq
\begin{cases}
\epsilon (\phi/r)^{d-2} (\rho/\phi)^{d-2}, & \text{if }y\in B(0,r/\phi)\setminus E_\delta,\\
\epsilon (\phi/r)^{d-2} (\rho/\phi)^\alpha, & \text{if }y\in E_\delta\setminus E_{\rho/\phi},
\end{cases}
\end{multline}
where $\alpha>d-2$ is some constant depending only on $d$.
\end{lemma}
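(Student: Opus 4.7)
The plan is to reduce to estimating the joint probability of hitting $x+B(\phi y,\tfrac{1}{2}\rho)$ and avoiding $x+(\phi E)_\rho$ during the excursion. By \reflemma{CapacityAndHittingDistant} applied to $(\phi E)_\rho=\phi E_{\rho/\phi}$, combined with \refprop{CapacityContinuity}\refitem{CapacityCompact} (so that $\Capa E_{\rho/\phi}\to\Capa E$ as $\rho/\phi\to 0$), the avoidance probability in the denominator is bounded below by $\tfrac{1}{2}$ uniformly, provided $\epsilon$ is chosen small enough. The unconditional probability of hitting the small ball is $\asymp(\rho/r)^{d-2}=(\phi/r)^{d-2}(\rho/\phi)^{d-2}$, again by \reflemma{CapacityAndHittingDistant}; so the task is to exhibit explicit path strategies that hit the small ball and avoid $(\phi E)_\rho$, and then estimate how much the avoidance constraint degrades this unconditional rate.

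In Case~1, where $y\in B(0,r/\phi)\setminus E_\delta$, the small ball lies at macroscopic distance at least $\phi\delta/2$ from $x+(\phi E)_\rho$. Applying the strong Markov property at the first hit of $\boundary B(x+\phi y,\phi\delta/4)$ and then of $x+B(\phi y,\tfrac{1}{2}\rho)$, I would decompose a successful path into three stages: (i) from $\xi'$, reach $\boundary B(x+\phi y,\phi\delta/4)$ without touching $(\phi E)_\rho$; (ii) hit $x+B(\phi y,\tfrac{1}{2}\rho)$ before leaving $B(x+\phi y,\phi\delta/4)$; (iii) from inside the small ball, continue to $\xi$ while avoiding $(\phi E)_\rho$. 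Stage~(ii) is a gambler's-ruin event contributing $\asymp(\rho/\phi)^{d-2}$, while (i) and (iii) each contribute a factor bounded below by a positive constant (with (i) carrying the global $(\phi/r)^{d-2}$ through the same type of excursion estimate as in \reflemma{CapacityAndHittingDistant}), since their safety corridors have fixed relative width.

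In Case~2, where $y\in E_\delta\setminus E_{\rho/\phi}$, the small ball may almost touch $x+(\phi E)_\rho$, so stages~(ii)--(iii) must be confined to a narrow corridor avoiding $(\phi E)_\rho$. Because $E\in\cE^\boxempty_c$ is a finite union of rescaled unit cubes with connected complement, for $\rho/\phi$ small enough the enlargement $(\phi E)_\rho$ has smooth boundary and its exterior satisfies a uniform cone condition: there exist $\theta_0=\theta_0(d)>0$ and $c=c(d)>0$ such that every point of $\boundary(\phi E)_\rho$ admits a cone of opening angle $\theta_0$ and length $c\phi$ contained in $\R^d\setminus(\phi E)_\rho$. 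I would locate such a cone $\Gamma$ rooted near the point of $\boundary(\phi E)_\rho$ closest to $x+\phi y$ and pointing outward, route stages~(ii)--(iii) inside a slight thickening of $\Gamma$, and estimate the associated survival probability for Brownian motion in $\Gamma$; by classical estimates governed by the first Dirichlet eigenvalue of the spherical cap of angle $\theta_0$, this survival probability is $\asymp(\rho/\phi)^\beta$ for some $\beta=\beta(d)>0$. Taking $\alpha=(d-2)+\beta$ then yields the claim.

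The main obstacle is Case~2: producing a quantitative exterior cone condition that is uniform over $E\in\cE^\boxempty_c$ (the extreme case being reentrant corners of the animal, whose $\rho$-enlargement still satisfies a cone condition with angle depending only on $d$), and matching it with the corresponding cone-survival estimate for Brownian motion. The exponent $\alpha$ emerges from this survival estimate and is strictly larger than $d-2$ because any proper cone carries a positive first Dirichlet eigenvalue on the sphere.
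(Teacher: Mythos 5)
Your overall architecture matches the paper's: reduce the conditional probability to the joint probability of hitting the small ball while avoiding $(\phi E)_\rho$, use gambler's ruin to extract the $(\rho/\phi)^{d-2}$ factor away from $E$, and invoke a cone/Burkholder estimate near $\boundary(\phi E)_\rho$ to get the exponent $\alpha>d-2$. However, there is a genuine gap in your Case~1, and a few smaller slips.

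The central missing step is the assertion that in stage~(i) the "safety corridors have fixed relative width." For $y\in B(0,r/\phi)\setminus E_\delta$, $y$ need not be reachable from $\boundary B(x,r)$ along a straight line or any obviously-available route while avoiding $(\phi E)_\rho$: since $E$ may be non-convex, $y$ can sit in a pocket of $\R^d\setminus E$ that can only be reached by navigating through corridors of the animal. This is exactly where the hypothesis $E\in\cE_c^\boxempty$ is load-bearing. The paper constructs an explicit polygonal path $\Gamma$ from a point of $\boundary B(x,3a\phi)$ to $x+\phi y$ by walking through the lattice cubes of $\Z^d\setminus A$ (using that the complement of the animal is connected), and checks that a uniform thickening $\Gamma_{\delta\phi/2}$ stays disjoint from $x+(\phi E)_\rho$; the corridor width is $\asymp \phi/n$ and the number of cube-steps is bounded uniformly in $y$ by compactness. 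Without some version of this construction (or a compactness argument giving a uniform lower bound on the tube radius around a family of connecting paths), you have not established that the probability of stage~(i) is bounded below by $c(\phi/r)^{d-2}$. Your stage~(iii) has the same unaddressed issue, since the motion must escape the same pocket.

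Two smaller points. First, the preliminary remark that the denominator $\P((x+(\phi E)_\rho)\cap W[0,\zeta_R]=\emptyset)\geq\tfrac12$ is both unnecessary and in the wrong direction: for a lower bound on $\P(A\mid B)=\P(A\cap B)/\P(B)$ you only need $\P(B)\leq 1$, so bounding $\P(B)$ from below gains nothing. Second, invoking the strong Markov property and the stage decomposition directly under the endpoint-conditioned law $\P_{\xi',\xi}$ is not immediate; the paper first removes the conditioning by noting that the relevant densities $f_{\tilde r,R}$ and $g_\epsilon$ are bounded above and below when $r\leq\tfrac12 R$, passing to $\P_{\xi'}$ before decomposing. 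Finally, in your Case~2 the cone length constant cannot depend only on $d$: the reach of a cone before it re-encounters $(\phi E)_\rho$ is controlled by the cube scale $1/n$ of $E=\tfrac1n A$, so the constant must be allowed to depend on $E$ (which is fine, since $E$ is fixed in the lemma). The cone \emph{angle} can indeed be taken to depend only on $d$, and the resulting exponent $\alpha=(d-2)+p$ with $p>0$ from the Burkholder-type estimate is correct.
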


We give the proof of \reflemma{HitWhileAvoiding} in \refsubsect{HitWhileAvoidingProof}.

The event $F_3(j)$ says that all $(x_j,B(y_\ell,\tfrac{1}{2}\rho/\phi))$, $\ell=1,\dotsc,L$, 
are \emph{not} $(\floor{N/4},\phi,r,R)$-successful. \reflemma{HitWhileAvoiding} implies (as 
in the proof of \refprop{NSuccessfulProb}) that, uniformly in $\ell$,
\begin{align}
&\P_{x_0} \! \condparentheses{(x_j,B(y_\ell,\tfrac{1}{2}\rho/\phi))
\text{ is $(\floor{N/4},\phi,r,R)$-successful}}{F_2(j)}\notag\\
&\quad\leq
\left(1-\epsilon (\rho/r)^{d-2}(\rho/\phi) \right)^{\floor{N/4}} 
= \exp\left[ -\epsilon \floor{N/4} (\phi/r)^{d-2} (\rho/\phi)^{d-1}(1+o(1))\right] \! .
\end{align}
Recalling \eqref{phidtDefinition} and \eqref{NdDefinition}, we have $N(\phi/r)^{d-2}\geq
(d/(d-2)+O(\eta))\log t$, so that
\begin{align}
&\P_{x_0} \! \condparentheses{\text{some $(x_j,B(y_\ell,\tfrac{1}{2}\rho/\phi))$ 
is $(\floor{N/4},\phi,r,R)$-successful}}{F_2(j)}\notag\\
&\quad\leq
L\exp\left[ -\epsilon\left( \tfrac{1}{4}d/(d-2)+O(\eta) \right) 
\left( \frac{(\log t)^{1/d} \rho}{\phi} \right)^{d-2} (\log t)^{2/d} \right] \! .
\label{CondProbyl}
\end{align}
By \eqref{RadiusBound}, $(\log t)^{1/d}\rho/\phi \to\infty$, whereas $L=(\log t)^{O(1)}$. 
Hence, the conditional probability in \eqref{CondProbyl} is $o(1)$ and $\condP{F_3(j)}
{F_2(j)}=1-o(1)$.

For $F_4(j)$, write $k=\floor{N/2}-\floor{N/4}$ and $p=\epsilon (\phi/r)^{d-2} (\rho/
\phi)^\alpha$.  \reflemma{HitWhileAvoiding} states that, conditional on $F_2(j)$, each 
ball $x_j+B(\phi z_m,\tfrac{1}{2}\rho)$ has a probability at least $p$ of being hit 
during each of the $k$ excursions from $\boundary B(x_j,r)$ to $\boundary B(x_j,R)$ 
in the definition of $F_4(j)$. It follows that $\condP{F_4(j)}{F_2(j)}$ is at least the 
probability that a Binomial$(k,p)$ random variable has value $M$ or larger. We have 
$p\to 0$ and $k-M\to\infty$ as $t\to\infty$, so using Stirling's approximation, we get
\begin{multline}
\P_{x_0} \! \condparentheses{F_4(j)}{F_2(j)}
\geq \binom{k}{M} p^M(1-p)^{k-M}
= \frac{k^k p^M(1-p)^{k-M}}{M^M (k-M)^{k-M}(\sqrt{2\pi}+o(1))}\\
\geq \left( \frac{kp}{M} \right)^M \frac{(1-p)^k}{\sqrt{2\pi}+o(1)}
= \exp\left[ -M\log(M/kp) -O(kp)-O(1)\right] \! .
\end{multline}

Observe that $kp=e^{O(1)}N_d(t,r,R)(\phi/r)^{d-2}(\rho/\phi)^\alpha=e^{O(1)}(\rho/\phi)^\alpha
\log t$. The assumption $\rho/\phi\to 0$ implies that $kp=o(\log t)$.  On the other hand, 
recall that $M=O((\phi/\rho)^{d-1})$, so that $M/kp=e^{O(1)}(\phi/\rho)^{\alpha+d-1}/\log t$.  
The hypothesis \eqref{RadiusBound} means that $\phi/\rho=o((\log t))^{1/d}$. Consequently, 
$M=o((\log t)^{(d-1)/d})$ and $\log(M/kp)\leq O(\log\log t)$. In particular, $M\log(M/kp)
\leq o(\log t)$, and we conclude that 
\begin{equation}
\label{CondProbzm}
\P_{x_0} \! \condparentheses{F_4(j)}{F_2(j)}=\exp\left( -o(\log t) \right)=t^{o(1)}.
\end{equation}

Combining \eqref{CondProbyl}, \eqref{CondProbzm}, and \refprop{NSuccessfulProb}, we obtain
\begin{align}
\P_{x_0}(F_2(j)\intersect F_3(j)\intersect F_4(j))
&= 
\P_{x_0}(F_2(j)) \P_{x_0}\condparentheses{F_3(j)}{F_2(j)} 
\P_{x_0}\condparentheses{F_4(j)}{F_2(j)}
\notag\\&=
t^{-d\Capa(E_{\rho/\phi})/[(d-2)\kappa_d]+O(\eta)}\,[1-o(1)]\,t^{o(1)}
=t^{-d\Capa(E)/[(d-2)\kappa_d]+O(\eta)}.
\end{align}
We have therefore verified \eqref{F234Bound}, and this completes the proof.
\qed\end{proof}


\subsection{Proof of \refprop{Connected}}
\lbsubsect{ConnectedProof}

\begin{proof}
$\T^d\setminus W[0,t]$ is open since $W[0,t]$ is the (almost surely) continuous image of a 
compact set.

Consider first a Brownian motion $\tilde{W}$ in $\R^d$.  Define
\begin{equation}
\tilde{Z}=\bigunion_{q,q'\in\Q}\bigunion_{1\leq i<j\leq d}
\set{(x_1,\dotsc,x_d)\in\R^d: x_i=q,x_j=q'}
\end{equation}
and note that $\tilde{Z}$ is the inverse image $\pi_0^{-1}(Z)$ of a path-connected, locally 
path-connected, dense subset $Z=\pi_0(\tilde{Z})\subset\T^d$ (where $\pi_0\colon,\R^d\to\T^d$ 
is the canonical projection map).  Since $\tilde{Z}$ is the countable union of 
$(d-2)$-dimensional subspaces, $\tilde{W}\cointerval{0,\infty}$ does not intersect $\tilde{Z}$, 
except perhaps at the starting point, with probability $1$. Projecting onto $\T^d$, it follows 
that $W\cointerval{0,\infty}$ intersects $Z$ in at most one point, and in particular $\T^d
\setminus W\cointerval{0,\infty}$ contains a path-connected, locally path-connected, dense 
subset. This implies the remaining statements in \refprop{Connected}.
\qed\end{proof}


\section{Proofs of Corollaries \ref{c:UnhitSet}--\ref{c:CoverTimeLDP}}
\lbsect{CoroProofs}


\subsection{Proof of \refcoro{UnhitSet}}

\begin{proof}
\eqref{HittingSetProbSimple} follows immediately from the more precise statements in 
\eqref{HittingLargeSetProb}--\eqref{DisjointTranslates}. By monotonicity and continuity, 
it suffices to show \eqref{HittingLargeSetProb} for $\Capa E>\kappa_d$.

Consider first the lower bounds in \eqref{HittingLargeSetProb}--\eqref{DisjointTranslates}.  
Replace $E$ by the compact set $\closure{E}$ (by hypothesis, this does not change the value 
of $\Capa E$). Let $\kappa>\Capa E$ be arbitrary and use 
\refprop{CapacityContinuity}\refitem{CapacityCompact} to find $r>0$ such that $\Capa(E_r)\leq\kappa$. 
Adjoin finitely many lines to $E_r$ to make it into a connected set $E'$ (as in the proof of \refthm{HitManySets}) 
and then adjoin any bounded components of $\R^d\setminus E'$ to form a set $E''\in\cE_c$ that 
satisfies the conditions of \refthm{ShapeOfComponents}. For $\Capa E\geq\kappa_d$, 
\refthm{ShapeOfComponents} implies that $x+\phi_d(t)E\subset\T^d\setminus W[0,t]$ for some 
$x\in\T^d$, with probability at least $t^{J_d(\kappa)-o(1)}$. If instead $\Capa E<\kappa_d$, 
then it is no loss of generality to assume that $\kappa<\kappa_d$ also. Then 
\refthm{ShapeOfComponents} shows that there are at least $t^{J_d(\kappa)-o(1)}$ components 
containing translates $x+\phi_d(t) E$; these translates are necessarily disjoint. In both cases 
we conclude by taking $\kappa\decreasesto\Capa E$.

For the upper bounds, we will shrink the set $E$.  The results nearly follow from 
Theorems~\ref{t:CapacitiesInWc}--\ref{t:ComponentCounts}, since the existence of $x+\phi_d(t) 
E\subset\T^d\setminus W[0,t]$ implies the existence of $x+(\phi_d(t) E)_{-\rho(t)}\subset 
\T^d\setminus W_{\rho(t)}[0,t]$. However, the set $E$ might not be connected. To handle this 
possibility, we will appeal directly to Lemmas~\ref{l:HitFiniteUnionOfLAs} and 
\ref{l:FiniteUnionOfLACounts}.

Let $\kappa\in(\kappa_d,\Capa E)$ (for \eqref{HittingLargeSetProb}) or $\kappa\in(0,\Capa E)$ 
(for \eqref{DisjointTranslates}) be arbitrary.  Apply \refprop{CapacityContinuity}\refitem{CapacityCtyPoint} 
to find an $r>0$ such that $\Capa(E_{-2r})>\kappa$. The enlargement $(E_{-2r})_r$ has a finite number 
$k$ of components, by boundedness. Set $\rho=\rho(t)=r\phi_d(t)$ and choose $n=n(t)$ such that $n(t)
\geq 2\sqrt{d}/\rho(t)$ and the hypotheses of \refprop{AnimalCounts} hold. (As in the proof of 
\refthm{ComponentCounts}, these conditions on $n(t)$ are mutually consistent.)  Apply 
\reflemma{SetsAndAnimals} to each of the $k$ components of $(E_{-2r})_r$ to obtain a set 
$E^\boxempty=\union_{j=1}^k E(A^{(j)})$ satisfying $(E_{-2r})_r\subset E^\boxempty\subset 
(E_{-2r})_{2r}\subset E$. Thus, $\Capa E^\boxempty\geq\kappa$. Furthermore, given $x\in\T^d$ 
there is $x'\in G_{n(t)}$ such that $x'+\phi_d(t)E^\boxempty\subset x+\phi_d(t)((E_{-2r})_{2r})
\subset x+\phi_d(t) E$.  Define $h(t)=C\phi_d(t)$, where $C$ is a constant large enough so 
that $E\subset B(0,C)$.  For $\Capa E>\kappa_d$, we can then apply \reflemma{HitFiniteUnionOfLAs} 
to conclude that $\P(\exists x\in\T^d\colon\, x+\phi_d(t)E\subset\T^d\setminus W[0,t])\leq 
t^{J_d(\kappa)+o(1)}$.  For $\Capa E<\kappa_d$, \reflemma{FiniteUnionOfLACounts} implies that 
$\chi(t,E)\leq\chi_+^\boxempty(t,n(t),\kappa,h(t))\leq t^{J_d(\kappa)+o(1)}$ with high 
probability. In both cases take $\kappa\increasesto\Capa E$.
\qed\end{proof}


\subsection{Proof of Corollaries \ref{c:VolumeLDP}--\ref{c:EvalLDP}}

\begin{proof}
Note the scaling relation
\begin{equation}
\label{EvalScaling}
\lambda(\phi D)=\phi^{-2}\lambda(D).
\end{equation}
Corollaries~\ref{c:VolumeLDP}--\ref{c:EvalLDP} follow from Theorems~\ref{t:CapacitiesInWc}, 
\ref{t:NoTranslates} and \ref{t:ShapeOfComponents} because of the inequalities \eqref{CapaVolEval}. Indeed, apart from the fact that the principal Dirichlet eigenvalue 
$\lambda(E)$ is decreasing in $E$ rather than increasing, the proofs are identical and 
we will prove only \refcoro{EvalLDP}.

Since $\lambda \mapsto I^{\rm Dirichlet}_d(\lambda)$ is continuous and decreasing on 
$\ocinterval{0,\lambda_d}$, it suffices to prove \eqref{LargeEval} and to show that 
$\P(\phi_d(t)^2\lambda(t,\rho(t)) \leq \lambda)=t^{-I_d^{\rm Dirichlet}(\lambda)+o(1)}$ 
for $\lambda<\lambda_d$.

For \eqref{LargeEval}, note that $\T^d\setminus W_{\rho(t)}[0,t]$ cannot contain a ball 
of capacity $>\kappa_d(\lambda_d/\lambda(t,\rho(t)))^{(d-2)/2}$: by \eqref{CapacityScaling} 
and \eqref{EvalScaling}, the component of $\T^d\setminus W_{\rho(t)}[0,t]$ containing 
such a ball would have an eigenvalue strictly smaller than $\lambda(t,\rho(t))$. In 
particular, if $\lambda>\lambda_d$ and $\lambda(t,\rho(t)) \geq \lambda \phi_d(t)^{-2}$, 
then $\T^d\setminus W_{\rho(t)}[0,t]$ cannot contain a ball of capacity $\kappa_d\,
\phi_d(t)^{d-2}((\lambda_d/\lambda)^{(d-2)/2}+\delta)$ for any $\delta>0$. Taking 
$\delta$ small enough so that $(\lambda_d/\lambda)^{(d-2)/2}$ $+\delta<1$, applying 
\refthm{NoTranslates} with $E$ the ball of capacity $\kappa_d((\lambda_d/\lambda)^{(d-2)/2}
+\delta)$, and letting $\delta\decreasesto 0$, we obtain \eqref{LargeEval}.

Now take $\lambda<\lambda_d$. By \refprop{NoWrapping}, apart from an event of negligible 
probability, every component $C$ of $\T^d\setminus W_{\rho(t)}[0,t]$ can be isometrically 
identified (under its intrinsic metric) with a bounded open subset $E$ of $\R^d$, via 
$C=x+E$ for some $x\in\T^d$. In particular, $\lambda(C)=\lambda(E)$, and we can apply 
\eqref{CapaVolEval} to conclude that $\kappa^*(t,\rho(t))\geq\Capa E\geq \kappa_d(\lambda_d/
\lambda(C))^{(d-2)/2}$. Applying \refthm{CapacitiesInWc}, 
\begin{equation}
\P(\phi_d(t)^2\lambda(t,\rho(t))\leq\lambda)
\leq 
\P(\kappa^*(t,\rho(t))\geq \kappa_d (\lambda_d/\lambda)^{(d-2)/2}\phi_d(t)^{d-2})
\leq t^{-I_d^{\rm Dirichlet}(\lambda)+o(1)}.
\end{equation}

For the reverse inequality, note that \refthm{ShapeOfComponents} implies that $\T^d\setminus 
W_{\rho(t)}[0,t]$ contains a ball of capacity $\kappa_d\,\phi_d(t)^{d-2}(\lambda_d/
\lambda)^{(d-2)/2}$ with probability at least $t^{-I_d^{\rm Dirichlet}(\lambda)-o(1)}$.
\qed\end{proof}


\subsection{Proof of \refcoro{InradiusLDP}}

\begin{proof}
Since $r\mapsto I_d^{\rm inradius}(r)$ is continuous and strictly increasing on 
$\cointerval{1,\infty}$ and is infinite elsewhere, it suffices to verify 
\eqref{SmallInradius} and show $\P(\rho_{\rm in}(t)> r\phi_d(t))= t^{-I_d^{\rm inradius}(r)
+o(1)}$ for $r\geq 1$. But the events $\set{\rho_{\rm in}(t)\leq r\phi_d(t)}$ and 
$\set{\rho_{\rm in}(t)> r\phi_d(t)}$ are precisely the event 
\begin{equation}
\set{(x+\phi_d(t) B(0,r))\intersect W[0,t]\neq \emptyset \; \forall x\in\T^d}
\end{equation} 
and its complement
\begin{equation}
\set{\exists x\in\T^d\colon\,(x+\phi_d(t) B(0,r))\intersect W[0,t]=\emptyset}
\end{equation} 
from \refthm{NoTranslates}, with $\rho(t)=0$, and equation \eqref{HittingLargeSetProb} from \refcoro{UnhitSet}, 
with $E=B(0,r)$.
\qed\end{proof}


\subsection{Proof of \refcoro{CoverTimeLDP}}

\begin{proof}
Recall that $\set{\rho_{\rm in}(t) > \epsilon} = \set{\cC_\epsilon > t}$, so that setting 
$t=u\psi_d(\epsilon)$, $r=\epsilon/\phi_d(u\psi_d(\epsilon))$ rewrites the event 
$\set{\cC_\epsilon>u\psi_d(\epsilon)}$ as $\set{\rho_{\rm in}(t)>r\phi_d(t)}$. By 
\eqref{phipsiAsymptotics}, $r\to (u/d)^{1/(d-2)}$ as $\epsilon\decreasesto 0$. It follows that 
$\P(\cC_\epsilon>u\psi_d(\epsilon))=t^{-I_d^{\rm inradius}((u/d)^{1/(d-2)})+o(1)}$ for $u>d$, 
since $r\mapsto I_d^{\rm inradius}(r)$ is continuous on $(1,\infty)$.  Noting that 
$t=\epsilon^{-(d-2)+o(1)}$, this last expression is $\epsilon^{I_d^{\rm cover}(u)+o(1)}$. 
A similar argument proves \eqref{SmallCoverTime}. Because $u\mapsto I_d^{\rm cover}(u)$ is 
continuous and strictly increasing on $\cointerval{d,\infty}$ and $I_d^{\rm cover}(v)=\infty$ 
otherwise, these two facts complete the proof.
\qed\end{proof}

\appendix

\section{Hitting probabilities for excursions}
\lbappendix{ExcursionProofs}


\subsection{Unconditioned excursions: proof of \reflemma{CapacityAndHittingDistant}}
\lbsubsect{CapaHittingDistantProof}

\begin{proof}
Since $R<\tfrac{1}{2}$, we may consider $x,\xi',\xi,W(t)$ to have values in $\R^d$ instead 
of $\T^d$. Furthermore, w.l.o.g.\ we may assume that $x=0$.

We first remove the effect of conditioning on the exit point $\xi\in\boundary B(0,R)$. Define 
$T=\sup\set{t<\zeta\colon\,d(0,W(t))\leq r}$ to be the last exit time from $B(0,r)$ before 
time $\zeta$; note that $E\intersect W[0,\zeta_R]=E\intersect W[0,T]$. Let $\tilde{r}\in(r,R)$ 
and define $\tilde{\tau}=\inf\set{t>T\colon\,d(0,W(t))=\tilde{r}}$ to be the first hitting 
time of $\boundary B(0,\tilde{r})$ after time $T$.

Under $\P_{\xi'}$ (i.e., without conditioning on the exit point $W(\zeta_R)$) we can express 
$(W(t))_{0\leq t\leq\zeta_R}$ as the initial segment $(W(t))_{0\leq t\leq\tilde{\tau}}$ 
followed by a Brownian motion, conditionally independent given $W(\tilde{\tau})$, started 
at $\tilde{\xi}=W(\tilde{\tau})$ and conditioned to exit $B(0,R)$ before hitting $B(0,r)$. 
Let $f_{\tilde{r},R}(\tilde{\xi},\cdot)$ denote the density, with respect to the uniform 
measure $\sigma_R$ on $\boundary B(0,R)$, of the first hitting point $W(\zeta_R)$ on 
$\boundary B(0,R)$ for this conditioned Brownian motion. Then for $S\subset\boundary B(0,R)$ 
measurable, we have
\begin{equation}
\label{PxiprimetautildeFormula}
\P_{\xi'} \! \left( E\intersect W[0,\zeta_R]\neq\emptyset, W(\zeta_R)\in S \right)
=\E_{\xi'} \! \left( \indicator{E\intersect W[0,T]\neq\emptyset} 
\int_S f_{\tilde{r},R}(W(\tilde{\tau}),\xi) d\sigma_R(\xi) \right).
\end{equation}
From \eqref{PxiprimetautildeFormula} it follows that the conditioned measure $\P_{\xi',\xi}$ 
satisfies 
\begin{equation}
\label{PxiprimexiFormula}
\P_{\xi',\xi}(E\intersect W[0,\zeta_R]\neq\emptyset)
= \frac{\E_{\xi'} \! \left( \indicator{E\intersect W[0,T]\neq\emptyset} 
f_{\tilde{r},R}(W({\tilde{\tau}}),\xi) \right)}{\E_{\xi'} \!
\left( f_{\tilde{r},R}(W({\tilde{\tau}}),\xi) \right)}.
\end{equation}
(More precisely, we would conclude \eqref{PxiprimexiFormula} for $\sigma_R$-a.e.\ $\xi$, 
but by a continuity argument we can take \eqref{PxiprimexiFormula} to hold for all $\xi$.)

Now choose $\tilde{r}$ in such a way that $R/\tilde{r}\to\infty, \tilde{r}/r\to\infty$, 
for instance, $\tilde{r}=\sqrt{rR}$. Since $R/\tilde{r}\to\infty$, we have $f_{\tilde{r},R}
(\tilde{\xi},\xi)=1+o(1)$, uniformly in $\tilde{\xi},\xi$. Therefore
\begin{align}
\label{DifferenceOfHitting}
\P_{\xi',\xi}(E\intersect W[0,\zeta_R]\neq\emptyset)
&= [1+o(1)]\,\P_{\xi'}(E\intersect W[0,\zeta_R]\neq\emptyset)
\notag\\
&= 
[1+o(1)]\left( \big. \P_{\xi'}(E\intersect W\cointerval{0,\infty}\neq\emptyset)
-\P_{\xi'}(E\intersect W\cointerval{\zeta_R,\infty}\neq\emptyset) \right) \! .
\end{align}
By the Markov property, the last term in \eqref{DifferenceOfHitting} is the probability of 
hitting $E$ when starting from some point $W(\zeta_R)\in\boundary B(0,R)$ (averaged over the 
value of $W(\zeta_R)$). Since $R/r\to\infty$, this will be shown to be an error term, and 
the proof will have been completed, once we show that
\begin{equation}
\label{HittingFromPoint}
\P_{\xi'} \! \left( W\cointerval{0,\infty}\intersect E\neq\emptyset \right)
=\frac{\Capa E }{\kappa_d \, r^{d-2}}\,[1+o(1)] \qquad\text{as }r/\epsilon\to\infty.
\end{equation}
Note that \eqref{HittingFromPoint} is essentially the limit in \eqref{CapacityAndHittingSimple}, 
taken uniformly over the choice of $E\subset B(0,\epsilon)$.

To show \eqref{HittingFromPoint}, let $g_\epsilon(\xi',\cdot)$ denote the density, with 
respect to the uniform measure $\sigma_\epsilon$ on $\boundary B(0,\epsilon)$, of the 
first hitting point of $\boundary B(0,\epsilon)$ for a Brownian motion started at $\xi'$ 
and conditioned to hit $B(0,\epsilon)$. Then
\begin{equation}
\label{HittingFromPointIntegral}
\P_{\xi'} \! \left( W\cointerval{0,\infty}\intersect E\neq\emptyset \right)
= \frac{\epsilon^{d-2}}{r^{d-2}}\int_{\boundary B(0,\epsilon)} 
\P_y \! \left( W\cointerval{0,\infty}\intersect E\neq\emptyset \right) 
g_\epsilon(\xi',y) d\sigma_\epsilon(y).
\end{equation}
Since $r/\epsilon\to\infty$, we have $g_\epsilon(\xi',y)\to 1$ uniformly in $\xi',y$. 
Hence \eqref{HittingFromPoint} follows from \eqref{HittingFromPointIntegral} and 
\eqref{CapacityAndHittingUniform}.
\qed\end{proof}


\subsection{Excursions avoiding an obstacle: proof of \reflemma{HitWhileAvoiding}}
\lbsubsect{HitWhileAvoidingProof}

\begin{proof}
It suffices to bound from below
\begin{equation}
\label{UnconditionalHitWhileAvoiding}
\P_{\xi',\xi} \! \left( W[0,\zeta_R]\text{ intersects $x+B(\phi y,\tfrac{1}{2}\rho)$ 
but not }x+(\phi E)_\rho \right),
\end{equation}
since conditioning on $(x+(\phi E)_\rho)\intersect W[0,\zeta_R]=\emptyset$ can only 
increase the probability in \eqref{UnconditionalHitWhileAvoiding}. Moreover, as in the 
proof of \reflemma{CapacityAndHittingDistant}, we may replace $\P_{\xi',\xi}$ by $\P_{\xi'}$, 
using now that the densities $f_{\tilde{r},R}$ and $g_\epsilon$ are bounded away from $0$ 
and $\infty$ when $r\leq\tfrac{1}{2}R$.

Fix $E\in\cE_c^\boxempty$, so that $E=\tfrac{1}{n}A$ for some $A\in\cA^\boxempty\intersect
\cE_c$ and $n\in\N$, and fix $\delta>0$ (we may assume that $\delta<1/(2n)$). By 
assumption, $E$ is bounded, say $E\subset B(0,a)$. By adjusting $\epsilon$, we may assume 
that $\rho/\phi<a$ (so that $(\phi E)_\rho\subset B(0,2a\phi)$) and $r>4a\phi$. We 
distinguish between three cases:

\medskip\noindent
$\bullet$
$y\in B(0,3a) \setminus E_\delta$. Consider $w\in\Z^d\setminus A$. Because 
$A\in\cE_c$, there is a finite path of open cubes with centres $w_0,w_1,\dots,w_k\in\Z^d$ 
such that $w_0\in\Z^d\setminus B(0,3an)$, $w_k=w$, $d(w_{j-1},w_j)=1$ and $\interior{
\union_{j=0}^k (w_j+[-\tfrac{1}{2},\tfrac{1}{2}]^d)}\intersect A=\emptyset$. By compactness, 
the length $k$ of such paths may be taken to be uniformly bounded. Hence, if $\rho/\phi
<\delta/2$, then, given $\xi''\in\boundary B(x,3a\phi)$, there is a path 
$\Gamma\subset B(x,3a\phi)$ from $\xi''$ to $x+\phi y$ consisting of a finite number of 
line segments, each of length at most $\phi$, such that $\Gamma_{\delta\phi/2}\intersect
(x+(\phi E)_\rho)=\Gamma_{\delta\phi/2} \intersect (x+\phi (E_{\rho/\phi}))=\emptyset$.  
Moreover, the number of line segments can be taken to be bounded uniformly in $y$ and 
$\xi''$. In fact, $\Gamma$ can be chosen as the union of line segments between points 
$x+\phi w_0/n,\dotsc, x+\phi w_k/n$ as above, together with a bounded number of line 
segments to join $\xi''$ to $x+\phi w_0/n$ in $B(x,3a\phi) \setminus B(x,2a\phi)$ and 
to join $x+\phi w_k/n$ to $x+\phi y$ in the cube $x+(\phi/n)(w+[-\tfrac{1}{2},
\tfrac{1}{2}]^d)$ containing $y$ (see \reffig{PathGamma})

\begin{figure}[htbp]
\vspace{0.3cm}
\begin{center}
\includegraphics{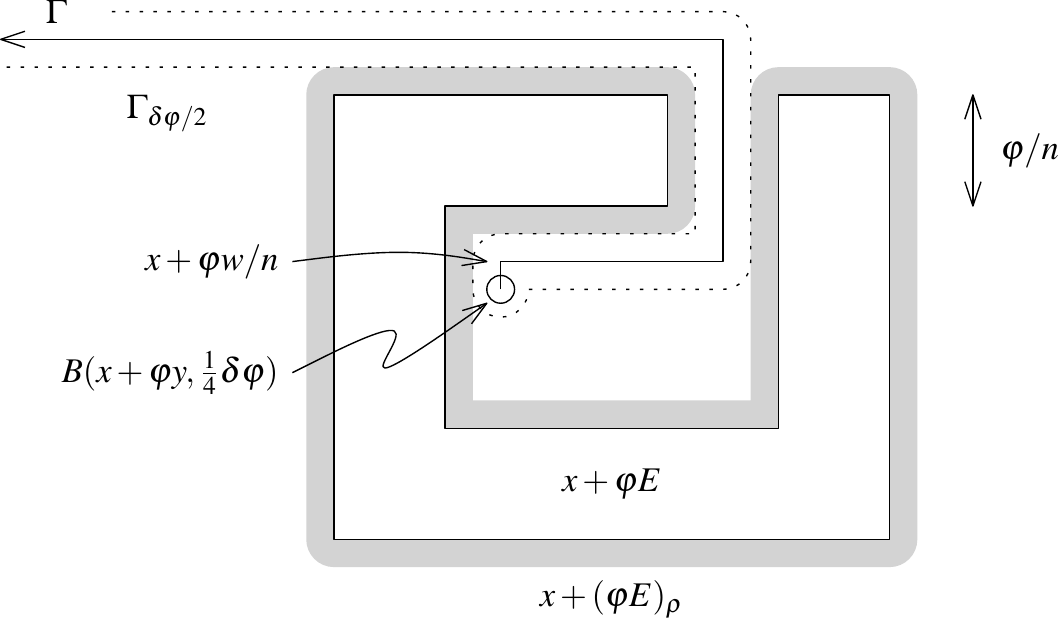}
\caption{The path $\Gamma$ reaching $x+\phi y$. The sets $\Gamma_{\delta\phi/2}$ and 
$x+(\phi E)_\rho=x+\phi(E_{\rho/\phi})$ are depicted for the worst-case scenario where 
the parameters $\rho/\phi<\delta/2<1/4n$ are equal.}
\lbfig{PathGamma}
\end{center}
\end{figure}

From $\xi'\in\boundary B(x,r)$, the Brownian path reaches $\boundary B(x,3a\phi)$ before 
$\boundary B(x,R)$ with probability $(r^{-(d-2)}-R^{-(d-2)})/((3a\phi)^{-(d-2)}-R^{-(d-2)})$. 
By our assumptions, this is at least $c_1 (\phi/r)^{d-2}$ for some $c_1>0$. Uniformly in 
the first hitting point $\xi''$ of $\boundary B(x,3a\phi)$, there is a positive probability 
of hitting $\boundary B(x+\phi y,\tfrac{1}{4}\delta\phi)$ via $\Gamma_{\delta\phi/2}$ before 
hitting $\boundary B(x,4a\phi)$. The probability of next hitting $\boundary B(x+\phi y,
\tfrac{1}{2}\rho)$ before $\boundary B(x+\phi y,\tfrac{1}{2}\delta\phi)$ is 
\begin{equation}
[(\tfrac{1}{4}\delta\phi)^{-(d-2)}-(\tfrac{1}{2}\delta\phi)^{-(d-2)}]
/[(\tfrac{1}{2}\rho)^{-(d-2)}-(\tfrac{1}{2}\delta\phi)^{-(d-2)}],
\end{equation} 
which is at least $c_2(\rho/\phi)^{d-2}$ for some $c_2>0$.  Thereafter there is a positive 
probability of returning to $\boundary B(x,r)$ without hitting $x+(\phi E)_\rho$, via 
$\Gamma_{\delta\phi/2}$. Combining these probabilities gives the required bound.

\medskip\noindent
$\bullet$
$y\in E_\delta\setminus E_{\rho/\phi}$. We have $y\in\tfrac{1}{n}(w+[-\tfrac{1}{2},
\tfrac{1}{2}]^d)$ for some $w\in\Z^d$. Write $C_\theta(y,\tfrac{1}{n}w)$ for the cone with 
vertex $y$, central angle $\theta$, and axis the ray from $y$ to $\tfrac{1}{n}w$. We can 
choose the angle $\theta$ and a constant $c_3>0$ small enough (in a manner depending only 
on $d$) so that $C_\theta(y,\tfrac{1}{n}w) \intersect E_{\rho/\phi} \intersect B(y,(1+c_3)
d(y,w))=\emptyset$. With $\theta$ and $c_3$ fixed, we can choose $c_4>0$ so that every point 
of $B(\tfrac{1}{n}w,c_4)$ is a distance at least $c_5>0$ from $\boundary C_\theta(y,
\tfrac{1}{n}w)$ and $\boundary B(y,(1+c_3)d(y,\tfrac{1}{n}w))$ (see \reffig{ConeInCube}).

\begin{figure}[htbp]
{\hfill\includegraphics{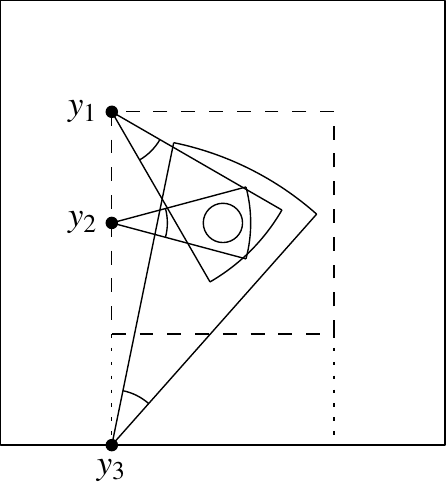}\hfill\includegraphics{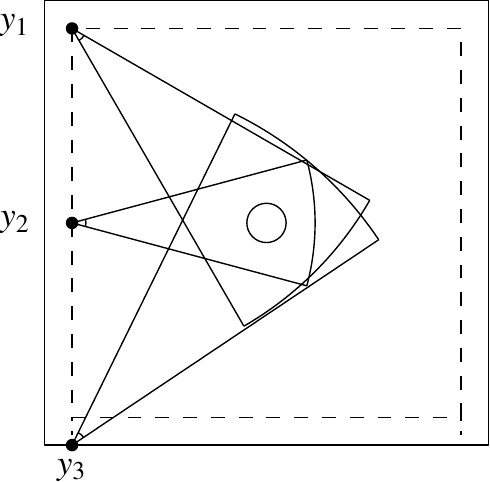}\hfill}
\caption{Cones $C_\theta(y,\tfrac{1}{n}w)$ and parts of balls $B(y,\rho/(2\phi))$ and 
$B(y,(1+c_3)d(y,\tfrac{1}{n}w))$ for three choices of $y$.  The outer square is 
the cube $\tfrac{1}{n}(w+[-\tfrac{1}{2},\tfrac{1}{2}]^d)$ containing $y$. The dashed 
line shows the greatest possible extent of $E_{\rho/\phi}$. At least one face of the 
cube is not contained in $E$, resulting in a conduit to the outside of the cube (dotted 
lines). The ball $B(\tfrac{1}{n}w,c_4)$ in the centre is uniformly bounded away from 
the sides of the cones and from the other balls. On the left the parameters $\rho/\phi
<1/4n$ are depicted as equal. On the right is the more relevant situation $\rho/\phi 
\ll 1/(4n)$.}
\lbfig{ConeInCube}
\end{figure}

Under these conditions, there is a probability at least $c_6(\rho/\phi)^\alpha$ for a 
Brownian path started from a point of $\boundary B(x+\phi w/n,c_4 \phi)$ to reach 
$\boundary B(x+\phi y,\tfrac{1}{2}\rho)$ before hitting $\boundary B(x+\phi y,\phi (1+c_3)
d(y,w))\union \boundary(x+\phi C_\theta(y,w))$, and then to reach $\boundary B(x+\phi y,
\phi d(y,w))$ before hitting $\boundary(x+\phi C_\theta(y,w))$.\footnote{This follows from 
hitting estimates for Brownian motion in a cone. For instance, via the notation of 
Burkholder~\cite[pp.\ 192--193]{Burk1977}, the harmonic functions on $C(0,z_0)$ given 
by $u_1(z)=r_0^{p+d-2}(\abs{z}^{-(p+d-2)}-\abs{z}^p) h(\vartheta)$ and $u_2(z)=\abs{z}^p 
h(\vartheta)$ (with $\vartheta$ the angle between $z$ and $z_0$ and the value $p>0$ chosen 
so that $u_1(z)=u_2(z)=0$ on $\boundary C(0,z_0)$) are lower bounds for the probabilities, 
starting from $z\in C(0,z_0)$, of hitting $\boundary B(0,r_0)$ before $\boundary B(0,1)
\union\boundary C(0,z_0)$ and of hitting $\boundary B(0,1)$ before $\boundary C(0,z_0)$, 
respectively.} The rest of the proof proceeds as in the previous case.

\medskip\noindent
$\bullet$
$y\in B(0,r/\phi)\setminus B(0,3a)$. Let $b=d(0,y)\in[3a,r/\phi]$. The 
probability that a Brownian path started from $\xi'$ first hits $\boundary B(x,b\phi)$ 
without hitting $\boundary B(x,R)$, then hits $\boundary B(x+\phi y,\tfrac{1}{12}b\phi)$ 
without hitting $\boundary B(x,\tfrac{2}{3}b\phi)$, then hits $\boundary B(x+\phi y,
\tfrac{1}{2}\rho)$ before hitting $\boundary B(x+\phi y,\tfrac{1}{6}b\phi)$, and
finally exits $B(x,R)$ without hitting $\boundary B(x,\tfrac{2}{3}b\phi)$, is at least 
$[c_7 (b\phi/r)^{d-2}] [c_8][c_9(\rho/(b\phi))^{d-2}][c_{10}]$. Since $x+(\phi E)_\rho
\subset B(x,2a\phi)\subset B(x,\tfrac{2}{3}b\phi)$, this is the required bound.
\qed\end{proof}

\begin{acknowledgements}
The research of the authors was supported by the European
Research Council through ERC Advanced Grant 267356 VARIS. The authors are grateful 
to M.\ van den Berg for helpful input.
\end{acknowledgements}


\bibliographystyle{spmpsci}
\bibliography{PMreferences}

\end{document}